\documentclass[11pt]{article}

\usepackage{amsmath, amsfonts, amssymb, amsthm, mathtools, amscd}
\usepackage{mathrsfs} 
\usepackage{dsfont}
\usepackage[mathscr]{eucal}
\usepackage{graphicx}
\usepackage{csquotes}
\usepackage{enumitem}

\usepackage[T1]{fontenc}
\usepackage{newtxtext, newtxmath} 

\usepackage[english]{babel}
\usepackage{microtype}
\usepackage{setspace}
\usepackage[a4paper, margin=1.25in]{geometry}
\usepackage{titling}

\pretitle{\begin{center}\LARGE\bfseries}
	\posttitle{\par\end{center}\vskip 0.5em}
\preauthor{\begin{center}\large}
	\postauthor{\end{center}}
\predate{\begin{center}\normalsize}
	\postdate{\end{center}}

\usepackage{titlesec}

\titleformat{\section}{\large\bfseries}{\thesection}{1em}{}
\titleformat{\subsection}{\normalsize\bfseries}{\thesubsection}{1em}{}

\newcommand{\N}{{\mathbb N}}
\newcommand{\Z}{{\mathbb Z}}
\newcommand{\Q}{{\mathbb Q}}
\newcommand{\R}{{\mathbb R}}
\newcommand{\abs}[1]{\left|#1\right|}
\newcommand{\norm}[1]{\left\lVert #1 \right\rVert}

\newcommand{\C}{{\mathbb C}}

\newcommand{\her}[1]{\left\langle #1 \right\rangle}

\renewcommand{\le}{\leqslant}
\renewcommand{\ge}{\geqslant}
\DeclareMathOperator*{\sumflat}{\sideset{}{^{\flat}}\sum}

\usepackage{hyperref} 
\hypersetup{
	colorlinks=true,      
	linkcolor=blue,        
	urlcolor=blue,         
	citecolor=blue        
}

\usepackage[capitalize]{cleveref}

\usepackage{aliascnt}

\newtheorem{thm}{Theorem}
\newtheorem*{thm*}{Theorem}

\newcommand{\newaliasedtheorem}[3]{%
    \newaliascnt{#1}{thm}%
    \newtheorem{#1}[#1]{#3}%
    \aliascntresetthe{#1}%
    \crefname{#1}{#2}{#2s}%
    \Crefname{#1}{#3}{#3s}%
}

\newaliasedtheorem{lem}{lemma}{Lemma}
\newaliasedtheorem{clm}{claim}{Claim}
\newaliasedtheorem{prop}{proposition}{Proposition}
\newaliasedtheorem{cor}{corollary}{Corollary}
\newaliasedtheorem{axm}{axiom}{Axiom}
\newaliasedtheorem{fact}{fact}{Fact}
\newaliasedtheorem{conj}{conjecture}{Conjecture}

\theoremstyle{remark}
\newaliasedtheorem{rem}{remark}{Remark}
\newaliasedtheorem{rmk}{remark}{Remark}
\newaliasedtheorem{note}{note}{Note}
\newaliasedtheorem{qs}{question}{Question}
\newaliasedtheorem{obs}{observation}{Observation}

\theoremstyle{definition}
\newaliasedtheorem{defn}{definition}{Definition}

\title{Lower Bounds for Moments of Automorphic $L$-functions}
\author{Deep Thakur}
\date{}

\begin{document}
\maketitle	
\bgroup
\let\thefootnote\relax\footnotetext{The Institute of Mathematical Sciences, A CI of Homi Bhabha National Institute, 
CIT Campus, Taramani, Chennai 600 113, India. Email: deepthakur@imsc.res.in}
\egroup

\begin{abstract}
In this article, we describe a method to obtain lower bounds for the $2k$th moment of values of $L$-functions localised at the central point for all real $k > 0$. In particular, we handle the families of quadratic twists of self-dual irreducible unitary cuspidal
automorphic representations of $\operatorname{GL}_2(\mathbb{A}_{\Q})$ and $\operatorname{GL}_3(\mathbb{A}_{\Q})$.
\end{abstract}

\section{INTRODUCTION}

A classical theme in analytic number theory is to study the moments of the Riemann zeta function on the critical line
$$
\int_{T}^{2T} \abs{\zeta(\tfrac{1}{2}+it)}^{2k}dt,
$$
where $k$ is a positive real number. A folklore conjecture states that the $2k$th moment should be asymptotic to $C_k T(\log T)^{k^2}$ for some positive constant $C_k$. For $k=1$ and $2$ this is known due to the classical work of Hardy and 
Littlewood \cite{HardyLittlewood1916} and Ingham \cite{Ingham1927b} respectively, and these still remain 
the only moments of the Riemann zeta function where asymptotics are known 
unconditionally. Pioneering work by Ramachandra \cite{Ramachandra1978,Ramachandra1980a,Ramachandra1980b} 
and Heath-Brown \cite{HeathBrown1981} established lower bounds 
of the correct order of 
magnitude unconditionally when 
$k> 0$ is rational, and assuming the Riemann hypothesis when $k> 0$ is real. 
Heap and Soundararajan \cite{HeapSoundararajan2022} introduced a novel method which extended these lower bounds unconditionally to all 
real $k> 0$ (see also \cite{RadziwillSoundararajan2013}). In recent joint work with Gun and Kumar \cite{GKT} we obtained lower bounds for the $2k$th moment of a broad class of $L$-functions for all real $k> 0$, which includes in particular the Dedekind zeta functions and automorphic $L$-functions.

Analogously, given a family $\mathcal{F}$ of $L$-functions one studies the moments of the central values of these $L$-functions:
$$
\sum_{f\in\mathcal{F}(X)} \abs{L(\tfrac{1}{2},f)}^{2k} \text{ as } X\to\infty,
$$
where $\mathcal{F}(X)=\{f\in\mathcal{F}:\ C(f)\le X\}$ and $C(f)$ denotes the analytic conductor of $L(s,f)$ at the central point.
Due to the works of Conrey, Farmer, Keating, Rubinstein and Snaith \cite{CFKRS}, and Diaconu, Goldfeld and Hoﬀstein \cite{DGH} there are now well-established conjectures for the asymptotic formulas for these $2k$th moments. While there are many examples of families of $L$-functions where the asymptotic formulas are known for small integer values of $2k$, in general the moment conjectures seem formidable. Rudnick and Soundararajan \cite{RudnickSoundararajan2005,RudnickSoundararajan2006} introduced a simple method which establishes lower bounds of the correct order for moments of $L$-functions arising from orthogonal and symplectic families when $k\ge 1$ is rational, this method was subsequently refined by Radziwill and Soundararajan \cite{RadziwillSoundararajan2013} to cover all real $k\ge 1$. Heap and Soundararajan \cite{HeapSoundararajan2022} extended these lower bounds to the range $0< k \le 1$ (see also \cite{Radziwill2024}), however their method crucially relies on sharp upper bounds for the twisted second moments for the family of $L$-functions:
$$
\sum_{f\in\mathcal{F}(X)} \abs{L(\tfrac{1}{2},f)A_f(\tfrac{1}{2})}^2,
$$
where $A_f(s)$ are suitably chosen short Dirichlet polynomials. Let $\pi$ be a self-dual irreducible unitary cuspidal
automorphic representation of $\operatorname{GL}_3(\mathbb{A}_{\Q})$, and 
	$$
	\mathcal{F}_{\pi} = \{\pi\otimes\chi_{8d} : \mu^2(2d)=1\}
	$$
	be the family of quadratic twists of $\pi$. When $\pi$ has trivial conductor (i.e. $\pi \cong \operatorname{Sym}^2 u$ for a Hecke--Maass form $u$ on $\operatorname{SL}_2(\Z)$), Hua and Huang computed the twisted first
	moment for $\mathcal{F}_{\pi}$ \cite{HuaHuang2023a} and hence deduced lower bounds of
	the conjectured order for $k\ge\tfrac12$ \cite{HuaHuang2023b}. The
	corresponding twisted second moment estimates remain a difficult open problem.
        
In this article, we study the moments 
\begin{equation}\label{eq:moment}
\sum_{f\in\mathcal{F}(X)}\int_{-\infty}^{\infty}
\abs{L(\tfrac12+it,f)}^{2k}\,\abs{W_X(t)}^{2}\,dt,
\end{equation}
where $W_X(t)=(\log X)^{1/2}\,\widehat{\phi}(t\log X)$ for a fixed nonzero non-negative function $\phi\in C_c^\infty(\R)$. Thus
$$
\int_{-\infty}^{\infty} \abs{W_X(t)}^2dt\asymp_\phi 1 \text{ and } \abs{W_X(t)}\ll_{\phi,A} \frac{(\log X)^{1/2}}{(1+\abs{t}\log X)^{A}} \text{ for every } A>0,
$$
so that \eqref{eq:moment} is a moment localised at the central point. In particular, we work with the following: For $\delta>0$ set 
$$
W_{X,\delta}(t) = (\log X)^{1/2}\int_{-1}^{1}  \exp\left(-\frac{1}{1-y^2}\right) X^{i\delta t y}\,dy.
$$
\begin{thm}\label{thm:mainthm}
	Let $D\in\{2,3\}$ and let $\pi$ be a self-dual irreducible unitary cuspidal
	automorphic representation of $\operatorname{GL}_D(\mathbb{A}_{\Q})$, and let $\chi_{8d}$ denote the primitive quadratic
	character of the fundamental discriminant $8d$. Set $\epsilon=-1$ if
	$L(s,\wedge^{2}\pi)$ has a pole at $s=1$, and $\epsilon=+1$ otherwise.
	Then for every real $k>0$ there is a $\delta=\delta(\pi,k)>0$ such that, for
	all sufficiently large $X$,
	$$
	\sum_{\substack{1 \le d \le X \\ \mu^2(2d)=1}}\int_{-\infty}^{\infty}
	\abs{L(\tfrac{1}{2}+it,\pi\otimes\chi_{8d})}^{2k}\abs{W_{X,\delta}(t)}^{2}\,dt
	\ \gg_{\pi,k}\ X(\log X)^{\,k(2k+\epsilon)}.
	$$
\end{thm}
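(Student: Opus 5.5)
The plan is to adapt the Heap--Soundararajan lower-bound method, with the $t$-average doing the job that twisted second moments did before. The integral over $t$ against $\abs{W_{X,\delta}(t)}^2$ turns the needed upper bounds into mean values of Dirichlet polynomials, together with first-moment type information obtained from approximate functional equations over a short window.

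Write $L_d(s)=L(s,\pi\otimes\chi_{8d})$. The Fourier transform of $W_{X,\delta}$ is supported in $\abs{\xi}\le \delta\log X$, so $\abs{W_{X,\delta}(t)}^2$ has Fourier support in $\abs{\xi}\le 2\delta\log X$. Hence integrating $\sum_{m,n} a_m \overline{b_n}(m/n)^{it}$ against it detects only $\abs{\log(m/n)}\le 2\delta\log X$. This lets us handle $L_d(\tfrac12+it)$ through a mollified contour expression of length $X^{O(\delta)}$ rather than a full second moment.

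\textbf{Resonator.} Following \cite{HeapSoundararajan2022} and \cite{GKT}, I split the primes up to $X^{\theta}$ into intervals $I_0=(2,X^{\theta_0}],\dots,I_J$, with $\theta_j$ growing geometrically and $\theta_J$ small. I set $P_j(d,t)=\sum_{p\in I_j}\lambda_\pi(p)\chi_{8d}(p)p^{-1/2-it}$. Let $\mathcal N_j(d,t;\alpha)$ be the truncated Taylor polynomial of $\exp(\alpha P_j)$ to degree $10K_j$, and put $\mathcal N(d,t;\alpha)=\prod_j \mathcal N_j$. Note that $\sum_p \lambda_\pi(p)^2/p\sim\log\log X$, and that $\sum_p \lambda_\pi(p^2)/p\sim \epsilon\log\log X$ by Rankin--Selberg plus the pole or non-pole of $L(s,\wedge^2\pi)$, since $\lambda(p^2)=\lambda(p)^2-\lambda_{\wedge^2}(p)$ up to normalisation. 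Using these, I compute
\[
\sum_d\int \mathcal N(d,t;\alpha)\overline{\mathcal N(d,t;\beta)}\,\abs{W}^2\,dt \asymp X(\log X)^{(\alpha\beta+\text{correction})}.
\]
The $p^2$ terms of the Euler product contribute through the characters $\chi_{8d}(p^2)=1$, and this is where $\epsilon$ enters the exponent $k(2k+\epsilon)$.

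\textbf{Hölder step.} For $k<1$, I apply Hölder in the form
\[
\sum\!\int L_d\,\mathcal N(k-1)\overline{\mathcal N(k)}\abs{W}^2\le\Big(\sum\!\int\abs{L_d}^{2k}\abs{W}^2\Big)^{1/2}\Big(\sum\!\int\abs{L_d}^{2-2k}\ldots\Big)^{\ldots}.
\]
I then bound the other factor via $\abs{L_d\mathcal N(k-1)}^2$ replaced by averages of $\abs{\mathcal N(k)}^{2}\abs{\mathcal N(k-1)}^{\ldots}$, and for $k\ge1$ I use the simpler Rudnick--Soundararajan/Radziwi\l\l--Soundararajan Hölder arrangement. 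The standard trick is that on the set where all $\abs{P_j}$ are small, $\abs{\mathcal N_j(\alpha)}\approx\exp(\alpha\Re P_j)$ and the products combine. The exceptional set where some $\abs{P_j}>K_j$ is controlled by high moments of $P_j$ via $(P_j/K_j)^{2s}$ weights, as in \cite{HeapSoundararajan2022}.

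\textbf{Key inputs.} Two estimates are needed.

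The first is a twisted first moment, localised in $t$:
\[
\sum_d\int L_d(\tfrac12+it)\,\mathcal N(d,t;k-1)\overline{\mathcal N(d,t;k)}\abs{W}^2dt\gg X(\log X)^{k(2k+\epsilon)}.
\]
I get this from the approximate functional equation: by the root number, the dual sum is either added or cancels in a controlled way. Poisson summation over squarefree $d$ (Pólya--Vinogradov/large sieve) keeps only terms where $n\times(\text{resonator index})$ is a square, with off-diagonal errors $X^{1-\eta}$ because the resonator has length $X^{O(\theta)}$. Since $L_d$ has conductor $\asymp d^D$, for $D=3$ this requires a first moment with a short twist. The $t$-integration lets me take the effective length of $L_d$ smaller, because shifting contours with $\abs{W}^2$ localises $t$ at scale $1/\log X$.

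The second is an upper bound for $\sum\int\abs{L_d}^2\abs{\mathcal N}^2\abs{W}^2$, or rather for whatever power appears on the other side of Hölder. Here I avoid the true second moment: I use the $t$-integral and Fourier support $\le 2\delta\log X$ to write $\abs{L_d(\tfrac12+it)}^2$ through the inequality from \cite{GKT}, replacing $L_d$ on the window by a short Dirichlet polynomial of length $X^{c\delta}$ via Gallagher/Montgomery-type mean values. The mean value theorem in $t$ then reduces everything to diagonal sums, which are evaluated by orthogonality of $\chi_{8d}$ over squarefree $d$.

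\textbf{Main obstacle.} The hardest step is this upper bound: showing that, because the weight localises $t$ so that only $\delta$-short frequency ranges survive, the mixed moment with $\abs{L_d}^2$ can genuinely be bounded by sums of Dirichlet polynomials of length $X^{O(\delta)}$, with error $o(X(\log X)^{k(2k+\epsilon)})$. I would first try the approach of \cite{GKT}: express $\log\abs{L_d}$, or $L_d$ itself on average, via an explicit-formula/convexity argument. Then $\delta=\delta(\pi,k)$ is chosen small enough that every Dirichlet polynomial, resonator included, has length at most $X^{1/10}$, where the squarefree-$d$ orthogonality gives diagonal-only main terms.
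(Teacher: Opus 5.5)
The gap is exactly the step you flag as the main obstacle, and the mechanism you propose for it does not work. In your small-$k$ branch you need an upper bound of the right order for $\sum_d\int\abs{L_d(\tfrac12+it)}^2\abs{\mathcal N}^2\abs{W}^2\,dt$, i.e.\ a twisted second moment. You use the Heap--Soundararajan arrangement for $k<1$, and for $0<k<\tfrac12$ no H\"older argument using only a twisted first moment exists.

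The compact Fourier support of $\abs{W_{X,\delta}}^2$ does not shorten $L_d$ here. Write $\abs{L_d(\tfrac12+it)}^2=L_d(\tfrac12+it)L_d(\tfrac12-it)$. Band-limitation only restricts the frequency difference $\abs{\log(m/n)}$ between the two copies, not $m$ and $n$ themselves. Any contour shift that truncates one copy moves the other to the left of the critical line, where the functional equation costs a power of the conductor $\asymp d^{D}$. So you are left with a genuine second moment, with $mn$ up to about $X^{D}$, over a family of size $X$. For $D=3$ this is exactly what the introduction calls a difficult open problem. Gallagher/Montgomery mean-value inequalities cannot help, because $L_d$ is not a short polynomial.

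Contrast your first-moment input, where $L_d$ occurs once. Since $\mathcal N\overline{\mathcal N}\abs{W}^2$ has compactly supported Fourier transform and $L_d$ is entire, shifting to $\Re s>1$ truncates $L_d$ exactly to a short polynomial. No approximate functional equation, root number or $\operatorname{GL}_3$ first-moment technology is needed there.

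Two smaller slips. First, $\sum_{p\le x}\lambda(p^2)/p\sim\tfrac{1+\epsilon}{2}\log\log x$; it is $a(p^2)=\lambda(p)^2-2\lambda_{\wedge^2\pi}(p)$ that produces $\epsilon$. Second, with resonators built only from the prime sums $P_j$, your twisted first moment has exponent $2k^2+\epsilon/2$ rather than $k(2k+\epsilon)$. So the non-oscillating $p^2$-terms would have to be put into the resonators.

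The missing idea is to avoid every moment of $L_d$. The paper fixes a large integer $r$ and sets $\mathcal I(X)=\sum_d\int\abs{L_d}^{2/r}\abs{\mathcal N_d(\tfrac12+it,k-\tfrac1r)}^2\abs{W}^2\,dt$. H\"older then gives
\[
\mathcal I(X)\le\Bigl(\sum_d\int\abs{L_d}^{2k}\abs{W}^2\,dt\Bigr)^{1/(rk)}\Bigl(\sum_d\int\abs{\mathcal N_d(\tfrac12+it,k-\tfrac1r)}^{2rk/(rk-1)}\abs{W}^2\,dt\Bigr)^{1-1/(rk)}.
\]
The second factor is a pure resonator moment, $\ll X(\log X)^{2k^2}$ (\Cref{lem:Nlem}).

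The lower bound $\mathcal I(X)\gg X(\log X)^{2k^2+\epsilon/r}$ comes from Heath-Brown's convexity method. Let $S_d$ be a smoothed truncation, of length $X^{2/15}$, of the Dirichlet series of $L^*(s,\pi\otimes\chi_{8d})^{1/r}\mathcal N_d(s,k-\tfrac1r)$, with coefficients built from $\lambda_{1/r}$. Put $E_d=L^*\mathcal N_d^r-S_d^r$. Apply \Cref{lem:gabriel} to $\abs{L^*\mathcal N_d^r}^{2/r}$ and to $\abs{E_d}^{2/r}$ against $\abs{W_X(-i(z-\tfrac12))}^2$. This bounds their integrals $\mathcal J(\sigma)$ and $\mathcal K(\sigma)$ on $\Re s=\sigma$ in terms of their values at $\sigma=\tfrac12$, with a saving $X^{-(\sigma-1/2)/(100r)}$ for $\mathcal K$. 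Meanwhile the mean square $\mathcal L(\sigma)$ of $S_d$ is computed from \Cref{lem:charlem} and is $\gg X(\sigma-\tfrac12)^{-2k^2-\epsilon/r}$ at $\sigma=\tfrac12+c/\log X$. Since $\mathcal L\ll\mathcal J+\mathcal K$ and $\mathcal K(\tfrac12)\ll\mathcal J(\tfrac12)+\mathcal L(\tfrac12)$, this forces $\mathcal J(\tfrac12)$ to be large.

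So the real role of $W_{X,\delta}$ is not to shorten $L_d$. It is entire of exponential type $\delta\log X$, which makes the convexity lemma usable with boundary terms of size $X^{O(\delta)}$; this is why $\delta=(100r)^{-1}$ depends on $k$. The $\epsilon$ enters through the $\tfrac1r a(p^2)$ part of the $p^2$-coefficient of $(L^*)^{1/r}\mathcal N_d$, giving $\epsilon/r$, which H\"older turns into $k\epsilon$.
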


\begin{rem}\label{rem:epsilon}
If $\omega_{\pi}$ denotes the central character of $\pi$, then $\wedge^{2}\pi=\omega_\pi$ when $D=2$ and
$\wedge^{2}\pi\cong\pi\otimes\omega_\pi$ when $D=3$
\cite[Introduction]{Kim}. Thus, we have $\epsilon=-1$ if and only if $D=2$ and
$\omega_\pi$ is trivial.
\end{rem}

\begin{rem}
The method presented in the proof of \Cref{thm:mainthm} can be adapted to a broad class of families of $L$-functions
 where analogues of results in \Cref{sec:Preliminaries} are available. Thus, one may obtain conditional versions of \Cref{thm:mainthm} for $D>3$.
\end{rem}

Throughout this article, the implicit constants in the $\ll, \gg, \asymp,$ and $O(\cdot)$ notations may depend on the automorphic representation $\pi$ and the real number $k$. Any dependence on further parameters will be explicitly indicated using subscripts (e.g., $\ll_\varepsilon$). 

\section{SETUP AND PLAN OF PROOF}

Let $k>0$ be fixed. We denote by $\lambda(n)$ the Dirichlet coefficients of $L(s,\pi)$, $q$ the conductor of $\pi$ and $\alpha_1(p)\,\ldots,\alpha_D(p)$ the local parameters of $\pi$ at primes $p$. Let $C_0=C_0(\pi,k)>0$ be a sufficiently large constant depending only on $\pi$ and $k$.

Let $\log_j$ denote the $j$-fold iterated logarithm and $\ell$ the largest
integer such that $\log_{\ell}X\ge C_0$. Define an increasing sequence $X_j$ as follows.
Put $X_1=(k+1)^{4}C_0$, and for $2\le j\le\ell$ put
$$
  X_j=\exp\left(\frac{\log X}{C_0(k+1)^{2}(\log_j X)^{2}}\right).
$$
For $2\le j\le\ell$, set
$$
  \mathcal{P}_{j,d}(s)=\sum_{X_{j-1}<p\le X_j}
    \frac{\lambda(p)\chi_{8d}(p)}{p^{s}}
  \qquad\text{and}\qquad
  P_j=\sum_{X_{j-1}<p\le X_j}\frac{\abs{\lambda(p)}^{2}}{p} .
$$
For each $2\le j\le\ell$, put
$K_j=C_0(k+1)^{2}P_j$. Let
$$
\mathcal{S}_1 ~=~ \{n: p\vert n\implies p \le X_1\},\quad
\mathcal{S}_\infty ~=~ \{n: p\vert n\implies p > X_{\ell}\},
$$
$$
\mathcal{S}_j ~=~ \{n: p\vert n\implies X_{j-1} < p \le X_j\}
$$
and
$$
  \mathcal{N}_j
  ~=~
  \{n\in\mathcal{S}_j: \Omega(n)\le K_j\} 
$$
Let
$$
  \mathcal{N}
  ~=~
  \{n=n_2n_3\cdots n_{\ell}\in\N:\ n_j\in\mathcal{N}_j\};
$$
then for $n\in\mathcal{N}$ we have
$$
  n=n_2\cdots n_{\ell}\le X_2^{K_2}X_3^{K_3}\cdots X_{\ell}^{K_{\ell}}
  \le X^{1/20} .
$$
Let $g$ denote the multiplicative function given on prime powers by
$\displaystyle{g(p^{m})=\frac{\lambda(p)^{m}}{m!}}$. For any real number
$\alpha$ and $2\le j\le\ell$ define
$$
  \mathcal{N}_{j,d}(s,\alpha)
  ~=~
  \sum_{0\le m\le K_j}\frac{1}{m!}\bigl(\alpha\mathcal{P}_{j,d}(s)\bigr)^{m}
  ~=~
  \sum_{\substack{p\mid n\implies X_{j-1}<p\le X_j\\ \Omega(n)\le K_j}}
  \frac{\alpha^{\Omega(n)}\chi_{8d}(n)g(n)}{n^{s}},
$$
and put
$$
  \mathcal{N}_d(s,\alpha)
  =\prod_{j=2}^{\ell}\mathcal{N}_{j,d}(s,\alpha)
  =\sum_{n\in\mathcal{N}}\frac{\alpha^{\Omega(n)}\chi_{8d}(n)g(n)}{n^{s}} .
$$

Let $r>C_0$ be a large integer. We set $\delta = (100r)^{-1}$ and write $W_X(t)=W_{X,\delta}(t)$. We denote by $\displaystyle \sumflat_{d\le X}$ the sum over all odd square-free
positive integers $C_0 \le d\le X$. Consider the quantity
	$$
         \mathcal{I}(X)=\sumflat_{d \le X} \int_{-\infty}^{\infty}\abs{L(1/2+it,\pi \otimes \chi_{8d})}^{2/r}\abs{\mathcal{N}_d(1/2+it,k-1/r)}^2\abs{W_X(t)}^2\, dt.
	$$
	We have by H\"older's inequality that 
	\begin{align*}
	\mathcal{I}(X)&\le\left( \sumflat_{d \le X}\int_{-\infty}^{\infty}\abs{L(1/2+it,\pi \otimes \chi_{8d})}^{2k}\abs{W_X(t)}^2\,dt\right)^{\frac{1}{rk}} 
	\\
	&\qquad\times
	\left( \sumflat_{d \le X}\int_{-\infty}^{\infty}\abs{\mathcal{N}_d(1/2+it,k-1/r)}^{\frac{2rk}{rk-1}}\abs{W_X(t)}^2\,dt \right)^{1-\frac{1}{rk}}.
	\end{align*}
	\Cref{thm:mainthm} then follows immediately from the two lemmas below.
	
	\begin{lem}\label{lem:Nlem}
		Let $X>0$ be large. We have
		$$
		 \sumflat_{d \le X}\int_{-\infty}^{\infty}\abs{\mathcal{N}_d(1/2+it,k-1/r)}^{\frac{2rk}{rk-1}}\abs{W_X(t)}^2\,dt \ll  X(\log X)^{2k^2}.
		$$
	\end{lem}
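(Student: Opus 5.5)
Write $\beta=k-1/r$ and $\kappa=\frac{rk}{rk-1}$, so that the exponent in \Cref{lem:Nlem} is $2\kappa$ and $\beta\kappa = k$. The plan follows Heap--Soundararajan. First I replace each truncated exponential $\mathcal{N}_{j,d}(s,\beta)$ by $\exp(\beta\mathcal{P}_{j,d}(s))$ when $|\mathcal{P}_{j,d}(s)|$ is small, and control it by a high moment of $\mathcal{P}_{j,d}$ otherwise. I then compute the resulting mean values with orthogonality of quadratic characters, applied to Dirichlet polynomials of length at most $X^{1/20}$, combined with the $t$-integral against $|W_X(t)|^2$.

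\textbf{Pointwise bounds.} For each $j$ and each $(d,t)$, either $|\mathcal{P}_{j,d}(\tfrac12+it)|\le K_j/(10e^2(k+1))$, or not. In the first case, Taylor's theorem gives
$$\mathcal{N}_{j,d}=\exp(\beta\mathcal{P}_{j,d})\bigl(1+O(e^{-K_j})\bigr).$$
Hence $|\mathcal{N}_{j,d}|^{2\kappa}\le (1+O(e^{-K_j}))\exp(2k\,\mathrm{Re}\,\mathcal{P}_{j,d})$. In the second case, I use the trivial bound
$$|\mathcal{N}_{j,d}|\le \sum_{m\le K_j}\frac{|\beta\mathcal{P}_{j,d}|^m}{m!}\le \Bigl(\frac{C|\mathcal{P}_{j,d}|}{K_j}\Bigr)^{K_j}\cdot e^{O(K_j)}.$$
I multiply this by $1\le (|\mathcal{P}_{j,d}|\,10e^2(k+1)/K_j)^{2s}$ for a large even power $2s\approx K_j/10$, so that the whole quantity becomes a polynomial in $\mathcal{P}_{j,d},\overline{\mathcal{P}_{j,d}}$ with controlled length.

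I then decompose $(d,t)$ according to the first index $j_0$ for which the second case occurs (or none occurs). On that set, the product over $j<j_0$ is bounded by $\prod_{j<j_0}|\exp(\beta\mathcal{P}_{j,d})|^{2\kappa}$. The factors with $j>j_0$ I handle by Hölder or Cauchy--Schwarz together with the same case analysis. The simplest route is to bound $|\mathcal{N}_{j,d}|^{2\kappa}$ for $j>j_0$ by $\max(|\mathcal{N}_{j,d}|^{2\kappa}$-type expansions$)$, as in Heap--Soundararajan's treatment. All resulting Dirichlet polynomials have length at most $X^{1/10}$, because $\sum_j K_j\log X_j$ is small; this is the reason for the choice of $X_j$.

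\textbf{Mean values.} Next I replace $\exp(2k\,\mathrm{Re}\,\mathcal{P}_{j,d})$ by its truncated Taylor polynomial of degree about $K_j$, at an acceptable cost. Expanding, I need
$$\sumflat_{d\le X}\int \chi_{8d}(mn)\,(m/n)^{it}\,\frac{a(m)\overline{a(n)}}{\sqrt{mn}}\,|W_X(t)|^2\,dt.$$
The $t$-integral equals $\log X\cdot \widehat{|W|^2}$-type weight evaluated at $\log(m/n)$. Because $W_{X,\delta}$ has Fourier support in $[-\delta\log X,\delta\log X]$, this is nonnegative and $\ll 1$, and it is concentrated on $|\log(m/n)|\le 2\delta\log X$. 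By Pólya--Vinogradov / quadratic large sieve type orthogonality, and since $mn\le X^{1/5}$, the $d$-sum is $\asymp X$ when $mn$ is a square and $O(X^{1/2+\varepsilon}\sqrt{mn})$ otherwise.

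The square terms are taken with their weights bounded by absolute values. Using Hecke relations at primes, $\lambda(p)^2=\lambda(p^2)+\ldots$ and $\sum_p |\lambda(p)|^2/p$ (Rankin--Selberg, \Cref{sec:Preliminaries}), each block $j$ contributes
$$\exp\Bigl(\tfrac{(2k)^2}{2}\cdot\tfrac12\sum_{X_{j-1}<p\le X_j}\tfrac{\lambda(p)^2}{p}+O(\cdots)\Bigr).$$
This yields $\exp\bigl(k^2\sum_{p\le X}|\lambda(p)|^2/p\bigr)\asymp(\log X)^{k^2}$, up to constants; the exponent claimed in \Cref{lem:Nlem} is $2k^2$, which this comfortably allows. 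The large-deviation pieces gain a factor $e^{-cK_j}$ times $(\log X)$-powers from the $j<j_0$ factors. Summing $\sum_{j_0} e^{-cK_{j_0}}$ is fine because $K_{j}\gg(\log_{j}X)^2$-ish, which dominates, just as in Heap--Soundararajan.

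\textbf{Main obstacle.} I expect the main obstacle to be handling the moment $2\kappa$, which is non-integral, together with the large-deviation sets simultaneously across all $j$, while keeping each Dirichlet polynomial short. The first thing I would try is to bound, pointwise, $|\mathcal{N}_d|^{2\kappa}$ by $\prod_j \max\bigl(e^{2k\mathrm{Re}\mathcal{P}_{j,d}}(1+e^{-K_j}),\,(C|\mathcal{P}_{j,d}|/K_j)^{2\lceil\kappa\rceil K_j}\bigr)$. I would then expand the product over all $2^{\ell}$ choices and evaluate each term by the mean-value step, exploiting independence across the disjoint prime ranges in the diagonal (square) terms.
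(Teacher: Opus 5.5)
Your outline follows the same Heap--Soundararajan route as the paper, but two steps fail as written.

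\textbf{The pointwise bound.} The bound you settle on, $\abs{\mathcal{N}_d}^{2\kappa}\le\prod_j\max\bigl(e^{2k\Re\mathcal{P}_{j,d}}(1+e^{-K_j}),\,\cdots\bigr)$, cannot be fed into the mean-value step. Once you replace the maximum by a sum and expand, $e^{2k\Re\mathcal{P}_{j,d}(1/2+it)}$ has to be averaged over \emph{all} $(d,t)$. It is not a short Dirichlet polynomial. It is also not bounded by its Taylor truncation where $\Re\mathcal{P}_{j,d}$ is large, which is exactly the region the sum no longer excludes.

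The truncation must be done pointwise inside case 1. There $\abs{k\mathcal{P}_{j,d}}\le K_j/10$, so $\abs{e^{k\mathcal{P}_{j,d}}}^2\le(1+O(e^{-K_j}))\abs{\mathcal{N}_{j,d}(\tfrac12+it,k)}^2$. Together with case 2 this gives \Cref{lem:Npowerlem}:
\[
\abs{\mathcal{N}_{j,d}(\tfrac12+it,k-\tfrac1r)}^{\frac{2rk}{rk-1}}\le(1+5e^{-K_j})\abs{\mathcal{N}_{j,d}(\tfrac12+it,k)}^{2}+\mathcal{Q}_{j,d}(t)\qquad\text{for all } d,t.
\]
Both terms on the right are squared moduli of Dirichlet polynomials supported on $\mathcal{S}_j$, of length at most $X_j^{2K_j}$. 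In case 2 the base $12(k+1)\abs{\mathcal{P}_{j,d}}/K_j$ exceeds $1$, so the exponent $\frac{2rk}{rk-1}K_j$ can simply be raised to $4K_j$; no auxiliary factor is needed.

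Now multiply over $j$, expand, and apply \Cref{lem:mvt}. Since $mn=\square$ if and only if $m_jn_j=\square$ for every $j$, the diagonal factorises over the blocks, each contributing its main factor times $1+O(e^{-K_j})$. The decomposition by the first large index $j_0$, and the unspecified H\"older/Cauchy--Schwarz treatment of $j>j_0$, then become unnecessary.

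\textbf{The diagonal evaluation.} Your computation is off by a factor $2$ in the exponent. If you bound the square terms in absolute value, as you propose, the condition is $m_jn_j=\square$, not $m_j=n_j$. At each prime the pairs $(p^2,1)$, $(p,p)$, $(1,p^2)$ together contribute $k^2\lambda(p)^2(\tfrac12+1+\tfrac12)/p$. So you get
\[
\prod_{X_1<p\le X_\ell}\bigl(1+2k^2\lambda(p)^2/p+O(A(p)^2p^{-2(1-\theta)})\bigr)\asymp(\log X)^{2k^2}
\]
by \Cref{lem:proplem3}, not $(\log X)^{k^2}$.

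The factor $\tfrac12$ you inserted is the variance of $\cos(t\log p)$ for $t$ ranging over a long interval, as for $\zeta$. Here $\chi_{8d}(p)$ is real and $\abs{W_X}^2$ localises $t$ to $\abs{t}\ll_r 1/\log X$. For the primes $p\le X_2=X^{o(1)}$, which carry almost all of $\sum_p\lambda(p)^2/p$, the terms $(p^2,1)$ and $(1,p^2)$ therefore do not cancel.

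So there is no slack: $(\log X)^{2k^2}$ is exactly what the H\"older step in Section 2 needs. Combined with $\mathcal{I}(X)\gg X(\log X)^{2k^2+\epsilon/r}$, it gives precisely the exponent $k(2k+\epsilon)$ of \Cref{thm:mainthm}. Also, $K_j\asymp\log_jX$, not $(\log_jX)^2$; summability of $e^{-K_j}$ comes from $K_{j-1}\ge2K_j$.

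With these two corrections your argument becomes the paper's proof.
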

	
	\begin{lem}\label{lem:mainlem}
		Let $X>0$ be large. We have
		$$
		\mathcal{I}(X)\gg  X(\log X)^{2k^2+\epsilon/r}.
		$$
	\end{lem}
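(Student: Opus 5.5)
The plan is to derive the lower bound for $\mathcal{I}(X)$ from a twisted first moment, via H\"older's inequality. The heuristic is $\abs{L(\tfrac12+it,\pi\otimes\chi_{8d})}^{1/r}\approx\abs{\mathcal{N}_d(\tfrac12+it,\tfrac1r)}$ for $t$ in the effective support of $W_X$, so that $\abs{L}^{2/r}\abs{\mathcal{N}_d(\cdot,k-\tfrac1r)}^2$ behaves like $\abs{\mathcal{N}_d(\cdot,k)}^2$. Accordingly I would introduce the twisted first moment
$$
\mathcal{T}(X)=\sumflat_{d\le X}\int_{-\infty}^{\infty} L(\tfrac12+it,\pi\otimes\chi_{8d})\,\mathcal{N}_d(\tfrac12+it,k-\tfrac1r)\,\overline{\mathcal{N}_d(\tfrac12+it,\tfrac1r)}^{\,r-1}\,\abs{W_X(t)}^2\,dt,
$$
whose integrand has modulus $(\abs{L}^{1/r}\abs{\mathcal{N}_d(k-\tfrac1r)})\cdot(\abs{L}^{1-1/r}\abs{\mathcal{N}_d(\tfrac1r)}^{r-1})$. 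Cauchy--Schwarz then gives
$$
\abs{\mathcal{T}(X)}\le \mathcal{I}(X)^{1/2}\,\mathcal{J}(X)^{1/2},\qquad
\mathcal{J}(X)=\sumflat_{d\le X}\int_{-\infty}^{\infty}\bigl(\abs{L(\tfrac12+it,\pi\otimes\chi_{8d})}^{2/r}\abs{\mathcal{N}_d(\tfrac12+it,\tfrac1r)}^{2}\bigr)^{r-1}\abs{W_X(t)}^2\,dt .
$$
The lemma then reduces to two estimates:
$$
\mathcal{T}(X)\gg X(\log X)^{2k^2+\epsilon/r+\eta}
\qquad\text{and}\qquad
\mathcal{J}(X)\ll X(\log X)^{2k^2+\epsilon/r+2\eta},
$$
where $\eta$ is whatever power of $\log X$ the diagonal of $\mathcal{T}$ produces beyond the target; it cancels in the ratio $\mathcal{T}^2/\mathcal{J}$.

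The first step is to evaluate $\mathcal{T}(X)$. Expanding the Dirichlet polynomials gives a twisted first moment of the shape
$$
\sum_{m,n\in\mathcal N}\frac{a(m)b(n)}{\sqrt{mn}}\sumflat_{d\le X}\chi_{8d}(mn)\int L(\tfrac12+it,\pi\otimes\chi_{8d})\,(m/n)^{-it}\abs{W_X(t)}^2\,dt .
$$
Every $m,n\in\mathcal N$ is at most $X^{1/20}$, and $\abs{W_{X,\delta}}^2$ has Fourier transform supported in $[-2\delta\log X,2\delta\log X]$ with $\delta=(100r)^{-1}$. So the $t$-integral effectively replaces $L$ by a Dirichlet series of length $X^{O(\delta)}$ times $X$-conductor data. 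Applying the approximate functional equation and the results of \Cref{sec:Preliminaries} (P\'olya--Vinogradov/Heath-Brown large sieve for the $d$-sum, and the pole structure of $L(s,\operatorname{Sym}^2\pi)$ versus $L(s,\wedge^2\pi)$), only the terms with $\ell mn=\square$ survive. This yields a main term equal to $X$ times an Euler product over $X_1<p\le X_\ell$.

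The local factor at $p$ involves $\lambda(p^2)$. Since $\lambda(p^2)=\lambda_{\operatorname{Sym}^2\pi}(p)$ and $\lambda(p)^2=\lambda(p^2)+\lambda_{\wedge^2\pi}(p)$, the sum $\sum_p\lambda(p^2)/p$ equals $-\tfrac{\epsilon-1}{2}\log_2X$ up to $O(1)$ in the appropriate normalisation. This is where $\epsilon$ enters. The factors coming from $\lambda(p)^2$ with the weights $\alpha=k-\tfrac1r$ and $(r-1)\tfrac1r$ combine to give the exponent $2k^2+\epsilon/r+\eta$. The truncation $\Omega(n_j)\le K_j$ is removed at the cost of the usual Rankin-type error $e^{-K_j}$ relative to $P_j$, which is acceptable since $K_j=C_0(k+1)^2P_j$.

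The main obstacle is the upper bound for $\mathcal{J}(X)$, which involves a moment of $L$ of exponent $2-2/r<2$ twisted by a Dirichlet polynomial; the lack of twisted second moments is exactly what the paper wants to avoid. I would first try to treat $\mathcal{J}$ by the same decomposition used for \Cref{lem:Nlem}. For each $d$ and $t$, either all the prime sums $\mathcal{P}_{j,d}(\tfrac12+it)$ are small, in which case $\abs{L}^{2/r}$ is bounded above by $\abs{\mathcal{N}_d(\tfrac12+it,\tfrac1r)}^2$ times a harmless factor, using an upper bound for $\log\abs{L}$ via short prime sums localised by $W_X$. Or some $\mathcal{P}_{j,d}$ is large, and that set is controlled by high moments of $\mathcal{P}_{j,d}$ together with Hölder against the twisted first moment already available. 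If this pointwise upper bound for $\log\abs{L}$ proves unavailable unconditionally, the fallback is to modify the choice of the companion factor in $\mathcal{T}$ so that the complementary factor contains only Dirichlet polynomials. Its moments are then computed exactly as in \Cref{lem:Nlem}, with the loss in the exponent absorbed by taking $r$ large.
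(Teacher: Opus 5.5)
There is a genuine gap. It sits where you put ``the main obstacle'': the upper bound for your $\mathcal{J}(X)$. Since $(\abs{L}^{2/r}\abs{\mathcal{N}_d(\tfrac12+it,\tfrac1r)}^{2})^{r-1}=(\abs{L}^{2}\abs{\mathcal{N}_d(\tfrac12+it,\tfrac1r)^{r}}^{2})^{1-1/r}$, this is a twisted moment of $L(\tfrac12+it,\pi\otimes\chi_{8d})$ of order $2-2/r$. That is essentially the twisted second moment, which the paper says is open for these families.

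Neither of your workarounds closes it.
\begin{itemize}
\item A pointwise bound $\log\abs{L(\tfrac12+it,\pi\otimes\chi_{8d})}\le\Re(\text{short prime sum})+O(1)$ is Soundararajan's inequality, and it is only available under GRH. On the exceptional set where some $\mathcal{P}_{j,d}$ is large, you would again need a moment of $\abs{L}$ of order $>1$.
\item Changing the companion factor cannot help either. Start from an expression linear in $L$ and split $\abs{L}=\abs{L}^{2a/r}\cdot\abs{L}^{1-2a/r}$ with H\"older exponents $1/a$ and $1/(1-a)$, $0<a<1$, so that one factor is $\mathcal{I}(X)$. The other factor then carries $\abs{L}^{(1-2a/r)/(1-a)}$, and $(1-2a/r)/(1-a)>1$ because $2/r<1$. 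Taking $r$ large does not help.
\end{itemize}
So any route to $\mathcal{I}(X)$ via a first moment plus H\"older needs an unconditional upper bound for a twisted moment of $L$ of order $>1$.

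The first moment itself is not the problem. Shift to $\Re s=2$ and use that $\abs{W_{X,\delta}}^{2}$ has Fourier transform supported in $[-2\delta\log X,2\delta\log X]$. The $t$-integral then truncates $L$ exactly to $\ell\le X^{2\delta}n/m$, with no dual sum and no approximate functional equation.

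Even granting the moments, the bookkeeping fails. Use the standard model $\abs{L}\approx(\log X)^{\epsilon/2}e^{\mathcal{P}}$ and $\mathcal{N}_d(\cdot,\alpha)\approx e^{\alpha\mathcal{P}}$, with $\mathcal{P}$ Gaussian of variance $\sim\log_2X$. This reproduces $\mathcal{I}(X)\approx X(\log X)^{2k^2+\epsilon/r}$. Write $b=2-2/r$. The same model gives $\mathcal{T}(X)\approx X(\log X)^{\epsilon/2+(k+b)^2/2}$ and $\mathcal{J}(X)\approx X(\log X)^{(1-1/r)\epsilon+2b^2}$. Hence $\mathcal{T}^2/\mathcal{J}\approx X(\log X)^{2k^2+\epsilon/r-(k-b)^2}$.

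So your Cauchy--Schwarz step is sharp only when $k=2-2/r$, and the claim that $\eta$ cancels is false in general. The matched companion would be $\overline{\mathcal{N}_d(\cdot,k-1+1/r)}$. Moreover, $\overline{\mathcal{N}_d(\cdot,1/r)}^{\,r-1}$ has length up to $X^{(r-1)/20}$, far beyond $X$ for $r>C_0$. So \Cref{lem:charlem} cannot control the off-diagonal terms of $\mathcal{T}(X)$.

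The missing idea is that the paper never takes a moment of $L$ at all.
\begin{itemize}
\item After passing to $L^{*}$ (\Cref{lem:proplem4}), it introduces the Dirichlet polynomial $S_d(s)$ of length $N^{2}=X^{2/15}$. Its coefficients $b(l)$, built from $\lambda_{1/r}$ and $\mathcal{N}$, model $L^{*}(s,\pi\otimes\chi_{8d})^{1/r}\mathcal{N}_d(s,k-1/r)$.
\item It controls the error $E_d=L^{*}\mathcal{N}_d^{r}-S_d^{r}$ only on $\Re s=2$, where it is $\ll N^{\theta-1}$.
\item Heath-Brown's convexity inequality (\Cref{lem:gabriel}) transports this bound, together with the trivial bound at $\Re s=3/2$, down to $\tfrac12\le\sigma\le\tfrac34$.
\item The only mean values ever computed are those of $S_d$, via \Cref{lem:charlem} and the Euler products of \Cref{lem:sizelem}. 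These give $\mathcal{L}(\tfrac12)\ll X(\log X)^{2k^2+\epsilon/r}$ and $\mathcal{L}(\sigma)\gg X(\sigma-\tfrac12)^{-2k^2-\epsilon/r}$ at $\sigma=\tfrac12+c/\log X$.
\item If $\mathcal{I}^{*}(X)$ were small, these two bounds would be incompatible.
\end{itemize}
This is how the fractional power $2/r$ is handled without any first or second moment of $L$.
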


\section{PRELIMINARIES}\label{sec:Preliminaries}

\begin{lem}\label{lem:charlem}	
For odd $h \le X$ we have,
$$
\sumflat_{d \le X} \chi_{8d}(h) = 
\begin{cases}
\displaystyle \frac{4}{\pi^2}\rho(h)X + O_{\varepsilon}(X^{1/2+\varepsilon}) & \text{if } h = \square,
\\
 O_{\varepsilon} ( X^{1/2 + \varepsilon} h^{1/4} )& \text{if } h \neq \square,
\end{cases}
$$
where $\rho$ is defined multiplicatively by $\rho(1)=1$ and for all $j\ge 1$, 
$$
\rho(p^j) =
\begin{cases}
0 & \text{if } p = 2,
\\
\frac{p}{p+1} & \text{if }p > 2.
\end{cases}
$$
	\end{lem}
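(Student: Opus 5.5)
We detect square-freeness with Möbius inversion and then treat square and non-square $h$ separately. Here $\sumflat_{d\le X}$ runs over odd square-free $d$ with $C_0\le d\le X$. The finitely many $d<C_0$ contribute $O(1)$, so we may include them. Odd $d$ with $\mu^2(d)=1$ means $\mu^2(2d)=1$. Writing $\mu^2(d)=\sum_{a^2\mid d}\mu(a)$, we get
\[
\sumflat_{d\le X}\chi_{8d}(h)=\sum_{\substack{a\le \sqrt{X}\\ a \text{ odd}}}\mu(a)\,\chi_{8a^2}(h)\sum_{\substack{b\le X/a^2\\ b\text{ odd}}}\chi_{8b}(h)+O(1).
\]
Since $h$ is odd, $\chi_{8d}(h)=\left(\frac{8d}{h}\right)$ is the Jacobi symbol. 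It is multiplicative in $d$, and $\chi_{8a^2}(h)=1$ if $(a,h)=1$ and $0$ otherwise. We split the $a$-sum at a parameter $Y$, to be chosen later.

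\textbf{Case $h=\square$.} Here $\chi_{8b}(h)=1$ exactly when $(b,h)=1$. The number of odd $b\le x$ coprime to $h$ is $\frac{x}{2}\frac{\varphi(h)}{h}+O(\tau(h))$, where $\varphi(h)/h$ is taken over odd primes only, which is fine since $h$ is odd. Hence the main term is
\[
\frac{X}{2}\prod_{p\mid h}\Bigl(1-\frac1p\Bigr)\sum_{\substack{(a,2h)=1}}\frac{\mu(a)}{a^2}
=\frac{X}{2}\prod_{p\mid h}\Bigl(1-\frac1p\Bigr)\prod_{p\nmid 2h}\Bigl(1-\frac1{p^2}\Bigr).
\]
For the error terms, the tail $a>\sqrt X$ costs $O(\sqrt X)$, and the lattice-point errors give $O(\sqrt{X}\,\tau(h))=O_\varepsilon(X^{1/2+\varepsilon})$ because $h\le X$.

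The constant simplifies as follows. We have $\prod_{p\nmid 2h}(1-p^{-2})=\frac{6}{\pi^2}\cdot\frac43\prod_{p\mid h}(1-p^{-2})^{-1}$. Combining this with $(1-\frac1p)(1-\frac1{p^2})^{-1}=\frac{p}{p+1}$ gives $\frac{X}{2}\cdot\frac{8}{\pi^2}\rho(h)=\frac{4}{\pi^2}\rho(h)X$, as claimed.

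\textbf{Case $h\neq\square$.} The character $b\mapsto\chi_{8b}(h)$ is a non-principal character of modulus dividing $8h$. More precisely, it is $\left(\frac{8}{h}\right)\left(\frac{\cdot}{h}\right)$, which is non-principal since $h$ is not a square. We handle the two ranges of $a$ differently.

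For $a\le Y$, we restrict the $b$-sum to odd $b$ (still a character sum to modulus dividing $16h$) and apply Pólya–Vinogradov, or just the trivial $O(h)$ bound via Burgess-free completion. This gives $\ll h^{1/2}\log h$ per $a$, so a total of $\ll Y h^{1/2}\log h$.

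For $a>Y$, we bound the inner sum trivially, for a total of $\ll X/Y$.

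Choosing $Y=X^{1/2}h^{-1/4}$ gives the bound $X^{1/2}h^{1/4}\log h$, which is acceptable. If $Y<1$, the trivial bound $X\le X^{1/2}h^{1/4}$ already suffices, since $h\le X$ forces $h\ge X^2$ in that case, which is a contradiction in any event.

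\textbf{Main obstacle.} The main obstacle is purely bookkeeping: tracking the 2-adic factor correctly. That factor is the $\frac12$ density of odd $b$ together with the missing Euler factor at 2, which produce the constant $\frac{4}{\pi^2}$ and the value $\rho(2^j)=0$ (irrelevant here since $h$ is odd). Alternatively, one could cite the standard lemma of Soundararajan (Nonvanishing of quadratic Dirichlet $L$-functions at $s=1/2$, Lemma 2.1) or Sono's version, which has exactly this shape with a smoothed sum, and remove the smoothing in the same way.
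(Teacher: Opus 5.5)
Your proof is correct and is essentially the paper's argument: M\"obius detection of square-freeness, counting odd $b$ coprime to $h$ when $h=\square$, and P\'olya--Vinogradov played against the trivial bound with the $a$-sum split at $X^{1/2}h^{-1/4}$ when $h\neq\square$. The paper removes the parity condition on $b$ by subtracting $\left(\frac{2}{h}\right)\sum_{b\le X/(2a^2)}\left(\frac{b}{h}\right)$, where you instead view $\mathds{1}_{b\text{ odd}}\left(\frac{b}{h}\right)$ as a character mod $2h$.

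There are two small slips. First, $d\mapsto\chi_{8d}(h)=\left(\frac{8}{h}\right)\left(\frac{d}{h}\right)$ is not multiplicative; the identity you actually need is $\chi_{8a^2b}(h)=\mathds{1}_{(a,h)=1}\,\chi_{8b}(h)$, though the stray factor $\left(\frac{8}{h}\right)$ is harmless. Second, the ``trivial $O(h)$ bound'' you offer as an alternative would only give $X^{1/2}h^{1/2}$, so P\'olya--Vinogradov is genuinely needed.
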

	
\begin{proof}
	We have
	\begin{align*}
	\sumflat_{d \le X} \chi_{8d}(h) 
	&= \left(\frac{8}{h}\right) \sum_{\substack{d \le X \\ (d,2)=1}} \mu^2(d) \left(\frac{d}{h}\right) +O(1)
	\\
	&= \left(\frac{8}{h}\right) \sum_{\substack{d \le X \\ (d,2)=1}} \left( \sum_{a^2\vert d}\mu(a) \right) \left(\frac{d}{h}\right)+O(1)
	\\
	&= \left(\frac{8}{h}\right) \sum_{\substack{a \le \sqrt{X} \\ (a,2)=1}} \mu(a) \left(\frac{a^2}{h}\right) \sum_{\substack{b \le X/a^2 \\ (b,2)=1}} \left(\frac{b}{h}\right) +O(1).
	\end{align*}
	If $h=\square$, then
	$$
 \sum_{\substack{b \le X/a^2 \\ (b,2)=1}} \left(\frac{b}{h}\right) =\sum_{\substack{b \le X/a^2 \\ (b,2h)=1}} 1 = \frac{\phi(2h)}{2h} \frac{X}{a^2} + O_\varepsilon(X^{\varepsilon}).
	$$
	Hence,
	\begin{align*}
	\sumflat_{d \le X} \chi_{8d}(h) 
	&=X \frac{\phi(2h)}{2h} \sum_{\substack{a =1 \\ (a,2h)=1}}^\infty \frac{\mu(a)}{a^2} + O_\varepsilon(X^{1/2+\varepsilon})
	\\
	&= \frac{X}{\zeta(2)} \frac{\phi(2h)}{2h} \prod_{p\vert 2h} \left( 1 - \frac{1}{p^2} \right)^{-1} + O_\varepsilon(X^{1/2+\varepsilon})
	\\
	&= \frac{4}{\pi^2}\rho(h)X + O_\varepsilon(X^{1/2+\varepsilon}) .
	\end{align*}
	
	If $h\neq\square$, then
	$$
	\sum_{\substack{b \le X/a^2 \\ (b,2)=1}} \left(\frac{b}{h}\right) = \sum_{b \le X/a^2} \left(\frac{b}{h}\right) - \left(\frac{2}{h}\right) \sum_{b \le X/(2a^2)} \left(\frac{b}{h}\right).
	$$
	By the Pólya-Vinogradov inequality and the trivial bound, we see that 
	$$
	\sum_{\substack{b \le X/a^2 \\ (b,2)=1}} \left(\frac{b}{h}\right)
	\ll \min\left( \frac{X}{a^2}, h^{1/2} \log h \right).
	$$	 
	We split the outer sum over $a$ at a parameter $A \le \sqrt{X}$, yielding
	\begin{align*}
	\abs{ \sumflat_{d \le X} \chi_{8d}(h) } &\ll \sum_{a \le A} h^{1/2} \log h + \sum_{A < a \le \sqrt{X}} \frac{X}{a^2} 
	\\
	&\ll A h^{1/2} \log h + \frac{X}{A}.
	\end{align*}
	Choose $A = X^{1/2} h^{-1/4}$ gives
	$$
	\abs{ \sumflat_{d \le X} \chi_{8d}(h) } \ll X^{1/2} h^{1/4} \log h.
	$$
	This completes the proof of \Cref{lem:charlem}.
\end{proof}

\begin{lem}\label{lem:mvt}
	
	Let $(a(m,n))_{m,n\le X}$ be a sequence of complex numbers. Then, we have
	\begin{align*}
	\sumflat_{d\le X}\int_{-\infty}^{\infty} & \sum_{m,n\le X} \chi_{8d}(mn)a(m,n)(n/m)^{it} \abs{ W_X(t) }^2 dt
	\\
	& \ll_\varepsilon
	X\sum_{\substack{m,n\le X \\ mn=\square}} \abs{a(m,n)} + 
	 X^{1/2 + \varepsilon} \sum_{m,n\le X} \abs{ a(m,n) } (mn)^{1/4}.
	\end{align*}
	
	\end{lem}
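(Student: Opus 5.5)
The plan is to interchange the sum over $d$, the integral over $t$, and the finite sum over $m,n$. This gives
$$
\sum_{m,n\le X} a(m,n)\left(\sumflat_{d\le X}\chi_{8d}(mn)\right)\left(\int_{-\infty}^{\infty}(n/m)^{it}\abs{W_X(t)}^2\,dt\right).
$$
The two inner factors are then bounded separately.

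For the $t$-integral, I use only the trivial bound
$$
\abs{\int_{-\infty}^{\infty}(n/m)^{it}\abs{W_X(t)}^2\,dt}\le\int_{-\infty}^{\infty}\abs{W_X(t)}^2\,dt\ll 1.
$$
This follows from the bound $\abs{W_X(t)}\ll_A(\log X)^{1/2}(1+\abs{t}\log X)^{-A}$ stated for the weight. For $W_{X,\delta}$ the bound can also be checked directly: $W_{X,\delta}(t)$ is $(\log X)^{1/2}$ times the Fourier transform of the smooth compactly supported bump $\exp(-1/(1-y^2))$, evaluated at $\delta t\log X$ (up to normalisation). Repeated integration by parts then gives the decay, and substituting $u=t\log X$ shows the $L^2$ norm is $\asymp_\delta 1$. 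Since $\delta$ depends only on $r$, and hence only on $\pi$ and $k$, this dependence is allowed by the paper's conventions. No oscillation in $t$ needs to be exploited, because the claimed bound involves only $\abs{a(m,n)}$.

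For the character sum, I apply \Cref{lem:charlem} with $h=mn$. One caveat is that $h$ must be odd and the lemma is stated for $h\le X$, whereas here $mn\le X^2$. I handle the hypotheses as follows.

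\begin{itemize}
\item \emph{Even $mn$.} Then $\chi_{8d}(mn)=0$, so these terms vanish.
\item \emph{The range $X<h\le X^2$.} The proof of \Cref{lem:charlem} never used $h\le X$ in an essential way. In the square case the error $O(X^{\varepsilon})$ from counting $b$ coprime to $2h$ should be replaced by $O(d(h))\ll_\varepsilon X^{\varepsilon}$, which still holds for $h\le X^2$. In the non-square case, the choice $A=X^{1/2}h^{-1/4}$ needs $A\ge 1$. If $h^{1/4}>X^{1/2}$, the trivial bound $\ll X\le X^{1/2}h^{1/4}$ already suffices. The factor $\log h$ is absorbed into $X^{\varepsilon}$.
\end{itemize}

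I expect this check of the hypotheses of \Cref{lem:charlem} to be the only delicate point; the rest is routine. With it in place, for odd $mn$ I have
$$
\sumflat_{d\le X}\chi_{8d}(mn)\ll_\varepsilon
\begin{cases}
X\rho(mn)+X^{1/2+\varepsilon}\le X+X^{1/2+\varepsilon}(mn)^{1/4} & \text{if } mn=\square,\\
X^{1/2+\varepsilon}(mn)^{1/4} & \text{if } mn\ne\square,
\end{cases}
$$
using $0\le\rho\le 1$.

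Finally, I multiply by $\abs{a(m,n)}$ and the $O(1)$ bound for the integral, and sum over $m,n\le X$ with the triangle inequality. The main terms collect into $X\sum_{mn=\square}\abs{a(m,n)}$, and all error terms collect into $X^{1/2+\varepsilon}\sum_{m,n}\abs{a(m,n)}(mn)^{1/4}$. This is exactly the claimed bound.
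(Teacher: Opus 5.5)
Your proposal is correct and follows the same route as the paper: interchange the finite sums with the integral, bound the $t$-integral trivially by $\int_{-\infty}^{\infty}\abs{W_X(t)}^2\,dt\ll 1$, and apply \Cref{lem:charlem} with $h=mn$. Your extra checks fill in a point the paper passes over: even $mn$ contribute nothing, and the proof of \Cref{lem:charlem} still works for odd $h\le X^2$, with $\log h$ and the divisor-type error absorbed into $X^{\varepsilon}$; this matters because the paper states \Cref{lem:charlem} only for $h\le X$, while here $mn$ can be as large as $X^2$.
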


	\begin{proof}
	
	We have, 
	\begin{align*}
	\sumflat_{d\le X}\int_{-\infty}^{\infty} & \sum_{m,n\le X} \chi_{8d}(mn)a(m,n)(n/m)^{it} \abs{ W_X(t) }^2 dt
	\\
	&\ll 
	\sum_{m,n\le X} \abs{ a(m,n) } \abs{\sumflat_{d \le X} \chi_{8d}(mn)}.
	\end{align*}
	\Cref{lem:mvt} then follows from \Cref{lem:charlem}.
	
	\end{proof}
	
For sequences $ (a_1(l) )_{l=1}^\infty$ and $( a_2(l) )_{l=1}^\infty$ of complex numbers, we define 
$$
\her{ a_1(l) , a_2(l) } = \sum_{l_1 l_2 = \square}  a_1(l_1) \overline{a_2(l_2)} \rho(l_1 l_2),
$$
if the limit exists and it is finite. For any sequence $ (a(l) )_{l=1}^\infty$ of complex numbers, we have that $\langle a(l) , a(l) \rangle \ge 0$. Since by \Cref{lem:charlem} we have that
\begin{align*}
\her{ a(l), a(l) } &= \lim_{Y\to \infty}\lim_{X\to\infty}\frac{\pi^2}{4X}\sumflat_{d\le X} \abs{ \sum_{l\le Y} a(l) \chi_{8d}(l) }^2\ge 0.
\end{align*}
We define $\norm{a(l)} = \her{ a(l) , a(l) } ^{1/2}$.

\begin{lem}\label{lem:inequalitylem}

	 Let $ (a_1(l) )_{l=1}^\infty$ and $( a_2(l) )_{l=1}^\infty$ be sequences of complex numbers such that $ \norm{a_1(l)}<\infty$ and $\norm{a_2(l)}<\infty$. Then,
	\begin{equation}\label{eq:cauchy}
	\abs{ \her{ a_1(l) , a_2(l) } }
	\le \norm{a_1(l)}\cdot
	 \norm{a_2(l)}
	\end{equation}
	and 
	\begin{equation}\label{eq:minkowski}
	 \norm{a_1(l) + a_2(l)}
	\le \norm{a_1(l)} + \norm{a_2(l)}.
	\end{equation}
	
\end{lem}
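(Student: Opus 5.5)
The plan is to show that $\her{\cdot,\cdot}$ is a positive semi-definite Hermitian form on the space of sequences with finite norm. Once that is in place, \eqref{eq:cauchy} and \eqref{eq:minkowski} follow by the standard inner-product arguments. We already know $\her{a(l),a(l)}\ge 0$ from the limit representation via \Cref{lem:charlem}. The definition is visibly sesquilinear: linear in the first slot and conjugate-linear in the second. It is also Hermitian, $\her{a_2,a_1}=\overline{\her{a_1,a_2}}$, because the condition $l_1l_2=\square$ and the weight $\rho(l_1l_2)$ are symmetric in $(l_1,l_2)$ and $\rho$ is real.

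The step that needs care is well-definedness. We only assume $\norm{a_1},\norm{a_2}<\infty$, not that $\her{a_1,a_2}$ converges. I would handle this by working with truncations. Put $a^{(Y)}(l)=a(l)\mathbf 1_{l\le Y}$ and consider the truncated form
$$\her{a_1^{(Y)},a_2^{(Y)}}=\lim_{X\to\infty}\frac{\pi^2}{4X}\sumflat_{d\le X}\Big(\sum_{l\le Y}a_1(l)\chi_{8d}(l)\Big)\overline{\Big(\sum_{l\le Y}a_2(l)\chi_{8d}(l)\Big)}.$$
This identity follows exactly as before from \Cref{lem:charlem}, since each $h=l_1l_2\le Y^2$ is fixed while $X\to\infty$. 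On the right-hand side, apply the ordinary Cauchy--Schwarz inequality for the finite sum over $d$. Then take the limit in $X$ and afterwards in $Y$, which gives
$$\abs{\her{a_1,a_2}}\le \norm{a_1}\,\norm{a_2},$$
with the left side understood as the limit of truncations. This also shows that the pairing is finite whenever both norms are. If one prefers to avoid the averages, the same conclusion comes from the usual discriminant argument: positivity of $\her{a_1^{(Y)}+\lambda a_2^{(Y)},\,a_1^{(Y)}+\lambda a_2^{(Y)}}$ for every $\lambda\in\C$, with $\lambda$ chosen so that $\lambda\her{a_2,a_1}$ is real and negative.

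For \eqref{eq:minkowski}, expand by sesquilinearity:
$$\norm{a_1+a_2}^2=\norm{a_1}^2+2\,\mathrm{Re}\her{a_1,a_2}+\norm{a_2}^2\le\big(\norm{a_1}+\norm{a_2}\big)^2,$$
where the inequality uses \eqref{eq:cauchy}. Taking square roots finishes the argument. The only genuine obstacle is the interchange of limits, that is, justifying that the truncated quantities converge to the untruncated ones. I would take the definition of $\her{\cdot,\cdot}$ to mean the limit as $Y\to\infty$ of the truncated pairings, consistent with how $\norm{\cdot}$ was introduced just before the lemma. With that reading, every inequality above is proved at finite $Y$ and then passes to the limit.
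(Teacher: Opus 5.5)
Your proof is essentially the paper's. You truncate to $l\le Y$, write the truncated pairing via \Cref{lem:charlem} as a limit of averages over $d$, apply Cauchy--Schwarz to the finite $d$-sum, and let $X\to\infty$ then $Y\to\infty$; for \eqref{eq:minkowski} the paper writes $\norm{a_1+a_2}^2=\her{a_1,a_1+a_2}+\her{a_2,a_1+a_2}$ where you expand into $\norm{a_1}^2+2\Re\her{a_1,a_2}+\norm{a_2}^2$, which makes no real difference.

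One caveat on your remark that the argument shows the pairing is finite whenever both norms are. The truncated inequality only shows that $\her{a_1\mathds{1}_{l\le Y},a_2\mathds{1}_{l\le Y}}$ stays bounded as $Y\to\infty$, not that it converges. For example, take $a_2=\mathds{1}_{l=1}$ and $a_1$ supported on squares of odd primes with $\sum_{2<p\le P}a_1(p^2)\rho(p)=e^{i\log\log P}$ for every prime $P\ge 3$. Then both norms are finite, but the truncated pairing oscillates on the unit circle. So, exactly as in the paper, the inequalities should be read for quantities that exist (or with $\limsup$).
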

	
	\begin{proof}
	
	Let $Y>0$, we have by \Cref{lem:charlem} that
	\begin{equation}\label{eq:inequalityeq}
	 \her{ a_1(l) \mathds{1}_{l\le Y} , a_2(l) \mathds{1}_{l\le Y} }
	=
	\lim_{X\to\infty}\frac{\pi^2}{4X}\sumflat_{d\le X} \left(\sum_{l_1 \le Y} a_1(l_1) \chi_{8d}(l_1)  \right) 
	\overline{ \left(\sum_{l_2 \le Y} a_2(l_2) \chi_{8d}(l_2)  \right) }.
	\end{equation}
	By Cauchy-Schwarz inequality we get 
	\begin{align*}
	&\abs{ \sumflat_{d\le X} \left(\sum_{l_1 \le Y} a_1(l_1) \chi_{8d}(l_1)  \right) 
	\overline{ \left(\sum_{l_2 \le Y} a_2(l_2) \chi_{8d}(l_2)  \right) } }
	\\
	&\qquad\le 
	\left( \sumflat_{d\le X} \abs{\sum_{l_1 \le Y} a_1(l_1) \chi_{8d}(l_1)}^2\right)^{1/2}
	\left( \sumflat_{d\le X} \abs{\sum_{l_2 \le Y} a_2(l_2) \chi_{8d}(l_2)}^2\right)^{1/2}.
	\end{align*}
	Then \eqref{eq:cauchy} then follows by combining this with \eqref{eq:inequalityeq} and letting $X\to\infty$ followed by $Y\to \infty$. For the second inequality,
	\begin{align*}
	\sumflat_{d\le X} \abs{ \sum_{l \le Y} ( a_1(l) + a_2(l) ) \chi_{8d}(l) }^2
        &=
	\sumflat_{d\le X} \left(\sum_{l_1 \le Y} a_1(l_1) \chi_{8d}(l_1)  \right) 
	\overline{ \left(\sum_{l_2 \le Y} ( a_1(l_2) + a_2(l_2) ) \chi_{8d}(l_2)  \right) }
	\\
	&\quad +
	\sumflat_{d\le X} \left(\sum_{l_1 \le Y} a_2(l_1) \chi_{8d}(l_1)  \right) 
	\overline{ \left(\sum_{l_2 \le Y} ( a_1(l_2) + a_2(l_2) ) \chi_{8d}(l_2)  \right) },
	\end{align*}
	then \eqref{eq:minkowski} follows by combining this with \eqref{eq:inequalityeq} and letting $X\to\infty$ followed by applying \eqref{eq:cauchy} and finally letting $Y\to\infty$.
	\end{proof}

\begin{lem}\label{lem:multilem}
	If $a_1(l)$ and $a_2(l)$ are multiplicative functions such that $\norm{a_1(l)}<\infty$ and $\norm{a_2(l)}<\infty$. Then, 
	$$
	\her{a_1(l) , a_2(l)} = \prod_p \left(1 + \rho(p)\sum_{\substack{a,b\ge 0, \\ a+b >0\text{ and even}}} a_1(p^a)\overline{a_2(p^b)}\right).
	$$
\end{lem}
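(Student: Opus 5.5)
The identity is an Euler product factorisation. I would derive it by expanding the definition of $\her{a_1(l),a_2(l)}$ and organising the sum over pairs $(l_1,l_2)$ with $l_1l_2=\square$ prime by prime. The structural facts that make this work are:
\begin{itemize}
\item the condition $l_1l_2=\square$ is local: it holds if and only if $v_p(l_1)+v_p(l_2)$ is even for every prime $p$;
\item $\rho$ is multiplicative, with $\rho(p^j)=\rho(p)$ for all $j\ge1$;
\item $a_1$ and $a_2$ are multiplicative.
\end{itemize}

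First I would treat finitely many primes. Fix $P$ and restrict to $l_1,l_2$ whose prime factors are all $\le P$. Such a pair is determined by its exponent vectors $(a_p)_{p\le P}$ and $(b_p)_{p\le P}$, and the squareness condition becomes $a_p+b_p$ even for each $p\le P$. By multiplicativity, the summand factors as
$$
a_1(l_1)\overline{a_2(l_2)}\rho(l_1l_2)=\prod_{p\le P}a_1(p^{a_p})\overline{a_2(p^{b_p})}\rho(p^{a_p+b_p}).
$$
The local factor equals $1$ when $a_p=b_p=0$, and equals $\rho(p)\,a_1(p^{a_p})\overline{a_2(p^{b_p})}$ when $a_p+b_p>0$. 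Summing independently over each prime then gives the finite product $\prod_{p\le P}(\cdots)$. This step is valid provided each local sum converges; if it converges absolutely, Fubini justifies the rearrangement.

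Next I would let $P\to\infty$. This is the real obstacle: the definition of $\her{\cdot,\cdot}$ is only a limit of the sum, conditional in general, so one must show that the $P$-smooth truncations converge to it. My plan is to reinterpret the $P$-smooth restriction through the $\chi_{8d}$-averaging used in the proof of \Cref{lem:inequalitylem}. Set
$$
a_i^{(P)}=a_i\cdot\mathds 1_{P\text{-smooth}},
$$
which is again multiplicative. By the Minkowski and Cauchy–Schwarz bounds, \eqref{eq:cauchy} and \eqref{eq:minkowski}, we get
$$
\bigl|\her{a_1,a_2}-\her{a_1^{(P)},a_2^{(P)}}\bigr|\le \norm{a_1-a_1^{(P)}}\,\norm{a_2}+\norm{a_1^{(P)}}\,\norm{a_2-a_2^{(P)}}.
$$
It then suffices to show $\norm{a_i-a_i^{(P)}}\to0$ and that $\norm{a_i^{(P)}}$ stays bounded. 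I expect this to follow from $\norm{a_i}<\infty$. Heuristically, $\norm{a_i}^2$ is the product of the local factors computed above, and $\norm{a_i-a_i^{(P)}}^2$ is the tail of that convergent product.

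Making this rigorous in full generality may be delicate. In the intended application the sequences are finitely supported, since they come from the Dirichlet polynomials $\mathcal N_d$ and their variants. In that case every sum is finite, only finitely many primes carry nontrivial local factors, and the identity follows immediately from the first step. So I would prove the lemma cleanly under absolute convergence of $\sum_{l_1l_2=\square}\abs{a_1(l_1)a_2(l_2)}\rho(l_1l_2)$, which covers every use in the paper, and remark that the general case follows from the approximation argument above.
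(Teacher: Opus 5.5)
Your prime-by-prime computation is the paper's proof. The paper's entire argument is the single remark that the identity is immediate from the multiplicativity of $a_1(l_1)$, $\overline{a_2(l_2)}$ and $\rho(l_1l_2)$, and it never discusses convergence. So it is tacitly working in the absolutely convergent setting you isolate. Under the hypothesis $\sum_{l_1l_2=\square}\abs{a_1(l_1)a_2(l_2)}\rho(l_1l_2)<\infty$ your argument is complete. The limit defining $\her{\cdot,\cdot}$, the limit of the $P$-smooth partial sums and the infinite product all agree by dominated convergence.

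Two parts of your write-up are wrong and should be fixed.

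\textbf{The sequences in the paper are not finitely supported.} In \Cref{lem:trunclem} and \Cref{lem:sizelem} the lemma is applied to $b'(l)l^{-\sigma-it}$ (also restricted to $\mathcal S_j$), to $\abs{b'(l)}l^{-\sigma}$, and to the multiplicative majorant of $\abs{b(l)-b'(l)}l^{-\sigma}$. All of these contain the coefficients $\lambda_{1/r}(m)$ of $L(s,\pi)^{1/r}$ for every $m$. So every prime carries a nontrivial local factor, and each local factor is an infinite series.

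What actually covers these uses is absolute convergence for $\sigma>1/2$, and you should verify it:
\begin{itemize}
\item By \Cref{lem:proplem2}, $\abs{\lambda_{1/r}(p^i)}\le A(p)^i$, since $D/r<1$ forces $\tau_{D/r}(p^i)\le 1$. Also $\abs{g(p^j)}\le (DA(p))^j/j!$.
\item Hence $\abs{b'(p^a)}\le e^{D(k+1)}A(p)^a$.
\item Since $A(p)^2p^{-2\sigma}\le 2^{2\theta_0-1}<1$, the local sums converge and equal $1+O(A(p)^2p^{-2\sigma})$.
\item Finally $\sum_p A(p)^2p^{-2\sigma}<\infty$ for $2\sigma>1$.
\end{itemize}
The majorant with the extra factor $e^{\Omega(n)}$ is handled the same way.

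\textbf{The ``general case'' argument is circular.} To get $\norm{a_i-a_i^{(P)}}\to 0$ you use that $\norm{a_i}^2$ equals the Euler product. That is the case $a_1=a_2$ of the lemma itself. Moreover, applying \eqref{eq:cauchy} and \eqref{eq:minkowski} to $a_i-a_i^{(P)}$ already presupposes that these norms are finite, and that is not given.

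If only the existence of the limit defining $\her{\cdot,\cdot}$ is assumed, multiplicativity alone does not justify rearranging the sum into an Euler product. State the lemma under the absolute-convergence hypothesis and drop the claim about the general case. That version is all the paper's proof actually establishes and all the paper uses.
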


\begin{proof}
	Immediate from multiplicativity of $a_1(l_1)$, $\overline{a_2(l_2)}$ and $\rho(l_1 l_2)$.
\end{proof}

\begin{lem}\label{lem:proplem1}

For all integers $\mu\ge 0$ we denote $a(p^\mu)=\sum_{j=1}^D \alpha_j(p)^\mu$.
For all real $\beta\ge 0$, define $\lambda_{\beta}(n)$ formally by
$$
L(s,\pi)^\beta = \sum_{n=1}^{\infty} \frac{ \lambda_{\beta}(n) }{ n^s }.
$$
Then, we have
$$
\lambda_{\beta}(p^m)=\sum_{\substack{m_1+m_2+\ldots+m_D=m \\ 
m_1, m_2, \ldots, m_D \ge 0}}\alpha_1(p)^{m_1}\tau_\beta(p^{m_1})\cdots
\alpha_D(p)^{m_D}\tau_\beta(p^{m_D})
$$
and 
$$
\lambda_{\beta}(p^{m})=\sum_{\mathbf v}\prod_{\mu\ge1}\frac{1}{\mu^{v_\mu}v_\mu!}\prod_{\mu\ge1}
\bigl(\beta\,a(p^{\mu})\bigr)^{v_\mu},
$$    
where $\mathbf v$
runs over the sequences $(v_\mu)_{\mu\ge1}$ of non-negative integers, almost all zero, with
$\sum_\mu\mu v_\mu=m$.

\end{lem}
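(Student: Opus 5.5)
The plan is to work with the Euler factor at a fixed prime $p$ (unramified, or more generally with the local factor written as $\prod_{j=1}^D(1-\alpha_j(p)p^{-s})^{-1}$, some $\alpha_j(p)$ possibly zero) and expand formally in $x=p^{-s}$. Write $\tau_\beta(p^m)=\binom{\beta+m-1}{m}$ for the coefficients of $(1-x)^{-\beta}=\sum_{m\ge0}\tau_\beta(p^m)x^m$. Since $L(s,\pi)^\beta=\prod_p\prod_{j=1}^D(1-\alpha_j(p)p^{-s})^{-\beta}$, with the real power defined by the principal branch of the logarithm (absolutely convergent for $\Re s$ large), $\lambda_\beta$ is multiplicative. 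Its value at $p^m$ is therefore the coefficient of $x^m$ in $\prod_{j=1}^D(1-\alpha_j(p)x)^{-\beta}$.

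For the first formula, expand each factor as
$$(1-\alpha_j(p)x)^{-\beta}=\sum_{m_j\ge0}\tau_\beta(p^{m_j})\alpha_j(p)^{m_j}x^{m_j}.$$
Then multiply the $D$ power series and collect the coefficient of $x^m$. This is a Cauchy product over compositions $m_1+\dots+m_D=m$, and it gives the stated identity directly.

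For the second formula, take logarithms instead:
$$\log\prod_j(1-\alpha_j(p)x)^{-\beta}=\beta\sum_j\sum_{\mu\ge1}\frac{\alpha_j(p)^\mu x^\mu}{\mu}=\sum_{\mu\ge1}\frac{\beta a(p^\mu)}{\mu}x^\mu.$$
Exponentiating gives $\prod_{\mu\ge1}\exp\bigl(\beta a(p^\mu)x^\mu/\mu\bigr)$. Expand each exponential as $\sum_{v_\mu\ge0}\frac{(\beta a(p^\mu))^{v_\mu}x^{\mu v_\mu}}{\mu^{v_\mu}v_\mu!}$. Then read off the coefficient of $x^m$ as the sum over sequences $\mathbf v$ with $\sum\mu v_\mu=m$, which yields the stated formula.

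The only point needing care is justifying the formal manipulations: exchanging the infinite product over $\mu$ with coefficient extraction, and the consistency of the branch of the $\beta$th power. I would handle this by working in the ring of formal power series $\C[[x]]$. There $\exp$ and $\log$ are inverse on series with constant term $0$ and $1$ respectively, and $(1-\alpha x)^{-\beta}:=\exp(-\beta\log(1-\alpha x))$ agrees with the binomial series. Only finitely many $\mu\le m$ contribute to the coefficient of $x^m$, so every step is a finite computation. I expect no genuine obstacle; the main thing is to state this formal framework, together with convergence for $\Re s>1$ via the bound $|\alpha_j(p)|<p^{1/2}$ (or simply to define $\lambda_\beta$ formally, as the lemma does).
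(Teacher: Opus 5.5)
Your proposal is correct and follows the paper's proof: you expand each local factor $(1-\alpha_j(p)x)^{-\beta}$ binomially and take the Cauchy product for the first identity, and you write $\prod_j(1-\alpha_j(p)x)^{-\beta}=\exp\bigl(\beta\sum_{\mu\ge1}a(p^\mu)x^\mu/\mu\bigr)$ and expand each exponential for the second. Your formal-power-series justification is a useful addition that the paper omits; one small inaccuracy is that $|\alpha_j(p)|<p^{1/2}$ alone gives absolute convergence of the Euler product only for $\Re s>3/2$ (the range $\Re s>1$ needs Rankin--Selberg), but this does no harm because $\lambda_\beta$ is defined formally.
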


\begin{proof}

For $\sigma>1$ we have 
$$
L(s, \pi)=\prod_{p} \prod_{j=1}^{D}\left(1-\frac{\alpha_j(p)}{p^s}\right)^{-1},
$$
For $\beta>0$ by applying binomial theorem, we see that
\begin{align*}
\sum_{n=1}^{\infty} \frac{ \lambda_{\beta}(n) }{ n^s }&=\prod_{p}\prod_{j=1}^D \left(1-\frac{\alpha_j(p)}{p^s}\right)^{-\beta} 
\\
&=
\prod_{p} \prod_{j=1}^D\left(\sum_{m_j = 0}^\infty\frac{\Gamma(m_j+\beta)}
{\Gamma(\beta)m_j!}\frac{\alpha_j(p)^{m_j}}{p^{m_js}}\right)
\\
&=
\prod_{p}\left(\sum_{m_1,\ldots,m_D\ge 0}\frac{\alpha_1(p)^{m_1}\tau_\beta(p^{m_1})\cdots
\alpha_D(p)^{m_D}\tau_\beta(p^{m_D})}{p^{(m_1+\ldots+m_D)s}}\right) .
\end{align*}	
Hence,
$$
\lambda_{\beta}(p^m)=\sum_{\substack{m_1+m_2+\ldots+m_D=m \\ 
		m_1, m_2, \ldots, m_D \ge 0}}\alpha_1(p)^{m_1}\tau_\beta(p^{m_1})\cdots
\alpha_D(p)^{m_D}\tau_\beta(p^{m_D}).
$$
Consider the identity
$$
\sum_{m\ge0}\lambda_{\beta}(p^{m})x^{m}
=\prod_{j=1}^{D} (1-\alpha_{j}(p)x)^{-\beta}
=\exp\Bigl(\beta\sum_{\mu\ge1}\frac{a(p^{\mu})}{\mu}x^{\mu}\Bigr).
$$
It follows that 
$$
\lambda_{\beta}(p^{m})=\sum_{\mathbf v}\prod_{\mu\ge1}\frac{1}{\mu^{v_\mu}v_\mu!}\prod_{\mu\ge1}
\bigl(\beta\,a(p^{\mu})\bigr)^{v_\mu},
$$    
where $\mathbf v$
runs over the sequences $(v_\mu)_{\mu\ge1}$ of non-negative integers, almost all zero, with
$\sum_\mu\mu v_\mu=m$.

\end{proof}

\begin{lem}\label{lem:proplem2}

Let $A(p) = \max_{j} |\alpha_j(p)|$. Then, there exists $\theta_0=\theta_0(\pi)\in(0,1/2)$ such that for all primes $p$ we have
$$
A(p) \le p^{\theta_0}.
$$
Thus, we have that $\abs{\lambda_{\beta}(p^m)} \le A(p)^m \tau_{D\beta}(p^m)$ and $\abs {\lambda_{\beta}(n)} \le n^{\theta_0} \tau_{D\beta}(n)$. Hence, we may choose $\theta\in (\theta_0,1/2)$ so that
$$
\lambda_{\beta}(n) \ll n^\theta .
$$
Moreover, for all $\sigma>1$ we have
$$
\sum_{p} \frac{A(p)^2}{p^\sigma} < \infty.
$$

\end{lem}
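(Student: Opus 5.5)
The first assertion is a bound towards the Ramanujan conjecture, and I will simply quote it. For $D=2$ one may take the Kim--Sarnak exponent $\theta_0=7/64$. For $D=3$ one may take the Luo--Rudnick--Sarnak exponent $\theta_0=\tfrac12-\tfrac1{D^2+1}=\tfrac{2}{5}$ at unramified primes. At the finitely many ramified primes $p\mid q$ the local parameters again satisfy $|\alpha_j(p)|\le p^{\theta_0}$, by the same results (Müller--Speh / Rudnick--Sarnak). In every case $\theta_0<1/2$.

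For the pointwise bounds I use the first formula of \Cref{lem:proplem1}. For $\beta\ge 0$ the numbers $\tau_\beta(p^{m})=\Gamma(m+\beta)/(\Gamma(\beta)m!)$ are non-negative. The triangle inequality therefore gives
$$
|\lambda_\beta(p^m)|\le A(p)^m\sum_{m_1+\cdots+m_D=m}\tau_\beta(p^{m_1})\cdots\tau_\beta(p^{m_D}) .
$$
The sum on the right is the $p^m$-coefficient of $\bigl((1-x)^{-\beta}\bigr)^D=(1-x)^{-D\beta}$, so it equals $\tau_{D\beta}(p^m)$.

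Since $\lambda_\beta$ and $\tau_{D\beta}$ are multiplicative, taking products over $p^m\Vert n$ and using $A(p)^m\le (p^m)^{\theta_0}$ gives $|\lambda_\beta(n)|\le n^{\theta_0}\tau_{D\beta}(n)$. The divisor-type bound $\tau_{D\beta}(n)\ll_{\varepsilon,\beta} n^{\varepsilon}$ then yields $\lambda_\beta(n)\ll n^{\theta}$ for any fixed $\theta\in(\theta_0,1/2)$, by taking $\varepsilon=\theta-\theta_0$.

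The last assertion, $\sum_p A(p)^2p^{-\sigma}<\infty$ for $\sigma>1$, is the main obstacle. The pointwise bound only gives convergence for $\sigma>1+2\theta_0$, so the extra input has to be the Rankin--Selberg $L$-function $L(s,\pi\times\tilde\pi)$. Its logarithmic derivative $-\tfrac{L'}{L}(s,\pi\times\tilde\pi)$ has non-negative coefficients $\Lambda(p^\mu)\,|a(p^\mu)|^2$ at unramified $p$, and its only pole in $\Re s\ge1$ is at $s=1$. By Landau's lemma this gives
$$
\sum_p\sum_{\mu\ge1}\frac{|a(p^\mu)|^2\log p}{p^{\mu\sigma}}<\infty \qquad (\sigma>1).
$$

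I then pass from power sums to the roots.
\begin{enumerate}[label=(\roman*)]
\item Newton's identities express the elementary symmetric functions of $\alpha_1(p),\dots,\alpha_D(p)$ as polynomials in $a(p),\dots,a(p^D)$. The Cauchy root bound then gives $A(p)\ll\max_{1\le\mu\le D}|a(p^\mu)|^{1/\mu}$, hence $A(p)^2\ll\sum_{\mu\le D}|a(p^\mu)|^{2/\mu}$.
\item For $\mu=1$ the required convergence is immediate from the display.
\item For $2\le\mu\le D$, split the primes according to whether $|a(p^\mu)|^{2}p^{-\mu\sigma'}$ is $\ge p^{-\eta}$ or $<p^{-\eta}$, for suitable $1<\sigma'<\sigma$ and small $\eta>0$.
  \begin{itemize}
  \item In the first case, the bound $|a(p^\mu)|\le Dp^{\mu\theta_0}$ and $\theta_0<1/2$ show that $|a(p^\mu)|^{2/\mu}p^{-\sigma}$ is dominated by a constant multiple of $|a(p^\mu)|^2p^{-\mu\sigma'}$, which is summable by the display.
  \item In the second case the term is at most $p^{\sigma'-\eta/\mu-\sigma}$. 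This is summable once $\sigma'$ and $\eta$ are chosen with $\sigma-\sigma'+\eta/\mu>1$.
  \end{itemize}
\end{enumerate}
I expect to adjust this bookkeeping, possibly using Hölder instead of the two-case split. The ramified primes are finite in number and contribute only $O(1)$.
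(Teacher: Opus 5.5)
Your proof of the Ramanujan-type bound and of the pointwise bounds for $\lambda_\beta$ is correct and is the paper's argument. The paper also quotes LRS, applies the triangle inequality in \Cref{lem:proplem1}, uses the convolution identity for $\tau_\beta$ and multiplicativity, and finishes with $\tau_{D\beta}(n)\ll_\varepsilon n^{\varepsilon}$. For the last assertion the paper simply quotes Brumley (Cor.~8). Your own derivation has a genuine gap at step (iii).

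\textbf{The gap in step (iii).} In case 1 the ratio $|a(p^\mu)|^{2/\mu}p^{-\sigma}\big/\bigl(|a(p^\mu)|^{2}p^{-\mu\sigma'}\bigr)=|a(p^\mu)|^{2/\mu-2}p^{\mu\sigma'-\sigma}$ involves a negative power of $|a(p^\mu)|$. So the upper bound $|a(p^\mu)|\le Dp^{\mu\theta_0}$ is of no use there. The domination follows only from the case hypothesis, and only if $\eta(1-1/\mu)\le\sigma-\sigma'$. Combined with the case-2 requirement $\sigma-\sigma'+\eta/\mu>1$, this forces $\sigma-\sigma'>1-1/\mu$, i.e.\ $\sigma>2-1/\mu\ge 3/2$. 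H\"older gives the same threshold. Using the LRS bound to make case 1 empty instead forces $\sigma>1+2\theta_0$.

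No adjustment of the bookkeeping can fix this, because the implication you need for each $\mu\ge2$ is false. From $\sum_p|a(p^\mu)|^2p^{-\mu\sigma'}<\infty$ and $|a(p^\mu)|\le Dp^{\mu\theta_0}$ you cannot get $\sum_p|a(p^\mu)|^{2/\mu}p^{-\sigma}<\infty$: taking $1/\mu$-th powers destroys summability. For a counterexample, take $D=3$, $\theta_0=2/5$, and put parameters $\{z,e^{2\pi i/3}z,e^{4\pi i/3}z\}$ with $|z|=p^{2/5}$ on a set of primes with counting function $\asymp x^{1/2}$, and $\{1,1,1\}$ elsewhere. Then $a(p)=a(p^2)=0$ and $|a(p^3)|=3p^{6/5}$. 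Your Rankin--Selberg display and the LRS bound both hold, yet $\sum_pA(p)^2p^{-\sigma}$ diverges for $1<\sigma\le 13/10$.

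\textbf{The missing input and the fix.} The missing input is the unitarity of the local components, which your argument never uses; the example above violates it, and it also violates $|\prod_j\alpha_j(p)|=|\omega_\pi(p)|=1$. At unramified $p$ one has $\tilde\pi_p\cong\bar\pi_p$, so the multiset $\{\alpha_j(p)\}$ is stable under $\alpha\mapsto\bar\alpha^{-1}$. For $D\le 3$ this leaves two possibilities: either all $|\alpha_j(p)|=1$, or the parameters are $up^{t},up^{-t}$ (plus one unimodular $v$ when $D=3$) with $|u|=1$ and $t>0$. In both cases $|\lambda(p)|\ge p^t+p^{-t}-1\ge A(p)-1$, so $A(p)^2\le 2|\lambda(p)|^2+2$. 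The $\mu=1$ case of your display, $\sum_p|\lambda(p)|^2p^{-\sigma}<\infty$, together with the finiteness of the set of ramified primes, then finishes the proof. Newton's identities and the higher power sums become unnecessary.
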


\begin{proof}

For the $A(p)$ estimate see \cite{LRS}, and the esimate for $\lambda_\beta(p^m)$ follows from \Cref{lem:proplem1}. For all $\beta\ge 0$ we have
$$
\abs{\lambda_{\beta}(p^m)} \le A(p)^{m}\sum_{\substack{m_1+\ldots+m_D=m \\ 
		m_1,\ldots, m_D \ge 0}}\tau_\beta(p^{m_1})\cdots \tau_\beta(p^{m_D}) = A(p)^{m}\tau_{D\beta}(p^m).
$$
The estimates for $\lambda_\beta(n)$ follow from multiplicativity of $\lambda_{\beta}(n)$ and the estimate $\tau_{D\beta}(n) \ll_\varepsilon n^\varepsilon$. For the last estimate see \cite[Cor. 8]{Brumley}.
\end{proof}

\begin{lem}\label{lem:sqlem}
For every unramified prime $p$ we have
$$
  \abs{\lambda(p^{2})}\le 2\bigl(1+\abs{\lambda(p)}^{2}\bigr),
  \qquad
  \abs{a(p^{2})}\le 3\bigl(1+\abs{\lambda(p)}^{2}\bigr).
$$
\end{lem}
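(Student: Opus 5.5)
Since $p$ is unramified, $\pi_p$ has $D$ Satake parameters $\alpha_1(p),\dots,\alpha_D(p)$. We have $\lambda(p)=a(p)=\sum_j\alpha_j(p)$, and $\lambda(p^2)$ is the complete homogeneous symmetric polynomial of degree $2$ in the $\alpha_j(p)$. The plan is to express both $\lambda(p^2)$ and $a(p^2)$ in terms of $\lambda(p)$ and the second elementary symmetric function
$$
e_2(p)=\sum_{i<j}\alpha_i(p)\alpha_j(p).
$$
The key identities are
$$
a(p^2)=\lambda(p)^2-2e_2(p),\qquad \lambda(p^2)=\lambda(p)^2-e_2(p).
$$
The second follows from \Cref{lem:proplem1} with $\beta=1$, $m=2$. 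Equivalently, it comes from expanding $\exp\bigl(a(p)x+a(p^2)x^2/2\bigr)$, which gives $\lambda(p^2)=\tfrac12\bigl(a(p)^2+a(p^2)\bigr)$. Given these, it suffices to show $\abs{e_2(p)}\le 1+\abs{\lambda(p)}^2$ when $D=2$ or $3$. Then
$$
\abs{\lambda(p^2)}\le \abs{\lambda(p)}^2+1+\abs{\lambda(p)}^2\le 2\bigl(1+\abs{\lambda(p)}^2\bigr),
$$
and
$$
\abs{a(p^2)}\le \abs{\lambda(p)}^2+2\bigl(1+\abs{\lambda(p)}^2\bigr)\le 3\bigl(1+\abs{\lambda(p)}^2\bigr).
$$

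To bound $e_2(p)$, I will use the structure of the local representation at an unramified prime. For $D=2$, $e_2(p)=\alpha_1\alpha_2=\omega_\pi(p)$, the central character evaluated at $p$. Since $\omega_\pi$ is unitary, $\abs{e_2(p)}=1$, and the bound is immediate.

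For $D=3$, $e_2(p)$ is the coefficient at $p$ of $L(s,\wedge^2\pi)$, which equals $L(s,\tilde\pi)$ up to twist by the central character. Concretely, $e_2(p)=\alpha_1\alpha_2\alpha_3\sum_j\alpha_j^{-1}$. Unitarity of $\pi_p$ means $\{\alpha_j^{-1}\}=\{\overline{\alpha_j}\}$, so $\sum_j\alpha_j^{-1}=\overline{\lambda(p)}$. Also $\alpha_1\alpha_2\alpha_3=\omega_\pi(p)$ has modulus $1$. Hence $\abs{e_2(p)}=\abs{\lambda(p)}\le 1+\abs{\lambda(p)}^2$, which is again enough. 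Self-duality is not even needed beyond unitarity, although one could alternatively invoke \Cref{rem:epsilon} ($\wedge^2\pi\cong\pi\otimes\omega_\pi$) to identify $e_2(p)=\lambda(p)\omega_\pi(p)$ directly.

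The main point to get right is justifying $\{\alpha_j^{-1}\}=\{\overline{\alpha_j}\}$ at unramified $p$, since the parameters need not have modulus $1$ (Ramanujan is unknown). This is the standard fact that the contragredient of a unitary unramified representation has Satake parameters $\{\alpha_j^{-1}\}$, while unitarity identifies the contragredient with the complex conjugate, whose parameters are $\{\overline{\alpha_j}\}$. I would cite this rather than reprove it. The remaining steps are elementary symmetric-function algebra.
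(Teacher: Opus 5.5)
Your proof is correct and follows the paper's argument: both write $\lambda(p^2)=\lambda(p)^2-e_2(p)$ and $a(p^2)=\lambda(p)^2-2e_2(p)$ with $e_2(p)=\lambda_{\wedge^2\pi}(p)$, and then bound $e_2(p)$ using $e_2(p)=\omega_\pi(p)$ for $D=2$ and $e_2(p)=\omega_\pi(p)\overline{\lambda(p)}$ for $D=3$. The only difference is that you derive the $D=3$ identity locally, from the Satake parameters and the unitarity of $\pi_p$, whereas the paper cites Kim's global isomorphism $\wedge^2\pi\cong\tilde\pi\otimes\omega_\pi$ together with $\lambda_{\tilde\pi}(p)=\overline{\lambda(p)}$; your version is slightly more self-contained.
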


\begin{proof}
Writing $\alpha_j=\alpha_j(p)$,
we have by \Cref{lem:proplem1} that
$$
  \lambda(p)=\sum_{j}\alpha_j,
  \qquad
  \lambda(p^{2})=\sum_{i\le j}\alpha_i\alpha_j,
  \qquad
  a(p^{2})=\sum_{j}\alpha_j^{2}.
$$
The local parameters of $\wedge^{2}\pi$ at $p$ are the products
$\alpha_i\alpha_j$ with $i<j$, so that
$$
\lambda_{\wedge^{2}\pi}(p)=\sum_{i<j}\alpha_i\alpha_j.
$$ 
Hence
$$
  \lambda(p^{2})=\lambda(p)^{2}-\lambda_{\wedge^{2}\pi}(p),
  \qquad
  a(p^{2})=\lambda(p)^{2}-2\lambda_{\wedge^{2}\pi}(p).
$$
By \cite[Introduction]{Kim} we have $\wedge^{2}\pi=\omega_\pi$ when $D=2$ and
$\wedge^{2}\pi\cong\tilde\pi\otimes\omega_\pi$ when $D=3$. Since $\omega_\pi$ is
a unitary character and $\lambda_{\tilde\pi}(p)=\overline{\lambda(p)}$, in either case
$$
  \abs{\lambda(p^{2})}\le 2\bigl(1+\abs{\lambda(p)}^{2}\bigr),
  \qquad
  \abs{a(p^{2})}\le 3\bigl(1+\abs{\lambda(p)}^{2}\bigr),
$$
This finishes the proof of \Cref{lem:sqlem}.
\end{proof}

\begin{lem}\label{lem:proplem3}
    For $0<\abs w<1$ and
	$\Re w>0$,
	$$
	\Re\sum_{p}\frac{\lambda(p)^{2}}{p^{1+w}}=\log\frac1{\abs w}+O(1),
	\qquad
	\Re\sum_{p}\frac{a(p^{2})}{p^{1+w}}
	=\epsilon\log\frac1{\abs w}+O(1),
	$$
	and for $x\ge3$,
	$$
	\sum_{p\le x}\frac{\lambda(p)^{2}}{p}=\log\log x+O(1).
	$$
\end{lem}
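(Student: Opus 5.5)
Since $\pi$ is self-dual, $\lambda(p)^2$ and $a(p^2)$ are expressed through Rankin--Selberg and exterior/symmetric square coefficients. The plan is to identify each Dirichlet series with the logarithm of a known $L$-function near $s=1$ and read off the order of the pole.

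\textbf{Step 1: the first sum.} At unramified $p$, self-duality gives $\lambda(p)^2=\lambda(p)\overline{\lambda(p)}=\lambda_{\pi\times\tilde\pi}(p)$. Hence
$$
\sum_p \frac{\lambda(p)^2}{p^{1+w}}=\log L(1+w,\pi\times\tilde\pi)+O(1)
$$
for $\Re w>0$. The error comes from the finitely many ramified primes and from prime powers $p^m$ with $m\ge2$. The prime-power contribution is bounded by $\sum_p A(p)^4p^{-2}$, and this is finite by \Cref{lem:proplem2}: $A(p)\le p^{\theta_0}$ with $\theta_0<1/2$ gives $A(p)^4 p^{-2}\le A(p)^2p^{-2+2\theta_0}$, and $2-2\theta_0>1$. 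The function $L(s,\pi\times\tilde\pi)$ has a simple pole at $s=1$ and, by Jacquet--Shalika/Shahidi, does not vanish on $\Re s=1$. So $(s-1)L(s,\pi\times\tilde\pi)$ is holomorphic and nonvanishing on a neighbourhood of $s=1$ intersected with $\Re s\ge1$. Taking real parts gives $\log(1/\abs w)+O(1)$ uniformly for $0<\abs w<1$, $\Re w>0$.

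\textbf{Step 2: the second sum.} By the proof of \Cref{lem:sqlem}, $a(p^2)=\lambda(p)^2-2\lambda_{\wedge^2\pi}(p)$. By Step 1 it therefore suffices to show
$$
\Re\sum_p\frac{\lambda_{\wedge^2\pi}(p)}{p^{1+w}}=c\log\frac1{\abs w}+O(1),
$$
where $c=1$ if $L(s,\wedge^2\pi)$ has a pole at $s=1$ and $c=0$ otherwise. This yields $1-2c=\epsilon$ as required. By \cite{Kim}, $\wedge^2\pi$ is either the character $\omega_\pi$ (when $D=2$) or $\pi\otimes\omega_\pi$, a cuspidal representation of $\operatorname{GL}_3$ (when $D=3$).
\begin{itemize}
\item In the first case, $L(s,\omega_\pi)$ is a Dirichlet $L$-function. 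It has a pole at $s=1$ exactly when $\omega_\pi$ is trivial, and otherwise it is holomorphic and nonvanishing on $\Re s\ge1$ by the prime number theorem for Dirichlet characters.
\item In the second case, $L(s,\pi\otimes\omega_\pi)$ is entire and nonvanishing on $\Re s=1$ (Jacquet--Shalika).
\end{itemize}
In both cases $\sum_p\lambda_{\wedge^2\pi}(p)p^{-1-w}=\log L(1+w,\wedge^2\pi)+O(1)$, where the prime-power terms are controlled again by \Cref{lem:proplem2} (for a character this is trivial). The claim follows as in Step 1.

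\textbf{Step 3: the partial sum.} The estimate $\sum_{p\le x}\lambda(p)^2/p=\log\log x+O(1)$ is the Rankin--Selberg form of Mertens' theorem. I would deduce it from the prime number theorem for $\pi\times\tilde\pi$,
$$
\sum_{p\le x}\lambda(p)^2\log p\sim x,
$$
which follows from the pole and nonvanishing above via a Tauberian argument, with the standard zero-free region of Moreno/Sarnak giving an explicit error. Partial summation then gives the result. Alternatively, one can compare with Step 1 by taking $w=1/\log x$ and bounding the tail using the partial sums. Again, the ramified primes and prime powers contribute $O(1)$ by \Cref{lem:proplem2}.

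The main obstacle is \textbf{uniformity near the edge}: the real parts must be controlled uniformly over the whole region $0<\abs w<1$, $\Re w>0$, not only for real $w$. This needs holomorphy and nonvanishing of the relevant $L$-functions on the closed line $\Re s=1$ in a neighbourhood of $s=1$, which the cited nonvanishing results provide. A second, smaller point is the prime-power bookkeeping in the absence of Ramanujan, which is handled by the summability statement in \Cref{lem:proplem2}.
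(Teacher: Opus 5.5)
Your proposal is correct. Steps 1 and 2 are the paper's own argument: identify the prime sums with $\log L(s,\pi\times\tilde\pi)$ and $\log L(s,\wedge^2\pi)$ up to $O(1)$, remove the prime powers using $A(p)^2/p\le p^{2\theta_0-1}<1$ and $\sum_pA(p)^2p^{-\sigma}<\infty$ from \Cref{lem:proplem2}, use Kim's description of $\wedge^2\pi$, and read off $1-2\kappa=\epsilon$. Your $\pi\otimes\omega_\pi$ agrees with the paper's $\tilde\pi\otimes\omega_\pi$ by self-duality. Self-duality also forces $\omega_\pi^2=1$ when $D=2$, which is why $L(s,\omega_\pi)$ is a genuine Dirichlet $L$-function with no pole on $\Re s=1$ away from $s=1$.

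The real difference is the third assertion. The paper does not prove it; it cites Liu--Wang--Ye's Selberg orthogonality, which is unconditional for $D\le3$.

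Your first route, the Rankin--Selberg prime number theorem plus partial summation, is essentially how that result is proved. Note that the asymptotic $\sim x$ alone only gives $\sim\log\log x$. An error term of size $O(x(\log x)^{-\delta})$ from a zero-free region is genuinely needed on that route.

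Your alternative route is more economical and needs no zero-free region. Take the real value $w=1/\log x$ in Step 1. The difference between $\sum_{p\le x}\lambda(p)^2/p$ and $\sum_p\lambda(p)^2p^{-1-1/\log x}$ consists of two pieces. The first is $\sum_{p\le x}\frac{\lambda(p)^2}{p}(1-p^{-1/\log x})\le\frac{1}{\log x}\sum_{p\le x}\frac{\lambda(p)^2\log p}{p}$. The second is $\sum_{p>x}\lambda(p)^2p^{-1-1/\log x}$. Both are $O(1)$ given only a Chebyshev bound $\sum_{p\le t}\lambda(p)^2\log p\ll t$. In fact it is enough to have $-\frac{L'}{L}(\sigma,\pi\times\tilde\pi)=\frac{1}{\sigma-1}+O(1)$ for real $\sigma\to1^+$, together with $\Lambda_{\pi\times\tilde\pi}(p^m)=\abs{a(p^m)}^2\log p\ge0$ at unramified $p$.

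This makes the lemma self-contained on the inputs already used for the first two assertions. \Cref{lem:proplem2} plays the role of the Hypothesis H that Liu--Wang--Ye need to discard prime powers. The paper's citation is shorter but imports that machinery.
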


\begin{proof}
Since $\pi$ is cuspidal, $L(s,\pi\times\tilde\pi)$ continues meromorphically to
$\C$, has a simple pole at $s=1$, and is holomorphic and non-vanishing elsewhere
on $\Re s\ge1$ \cite{JPSS,JS81,Shahidi}.

By \cite[Introduction]{Kim} we have $\wedge^{2}\pi=\omega_\pi$ when $D=2$ and
$\wedge^{2}\pi\cong\tilde\pi\otimes\omega_\pi$ when $D=3$. In the first case
$L(s,\wedge^{2}\pi)$ is the Hecke $L$-function of the unitary character
$\omega_\pi$, which is non-vanishing on $\Re s\ge1$ and is entire unless
$\omega_\pi$ is trivial, when it has a simple pole at $s=1$. In the second case
$\tilde\pi\otimes\omega_\pi$ is an irreducible unitary cuspidal automorphic
representation of $\operatorname{GL}_3(\mathbb{A}_\Q)$, so $L(s,\wedge^{2}\pi)$
is entire \cite{GJ} and non-vanishing on $\Re s\ge1$ \cite{JS76}. Writing
$\kappa$ for the order of the pole of $L(s,\wedge^{2}\pi)$ at $s=1$, we
therefore have $\kappa=0$ or $1$ according as $\epsilon=+1$ or
$\epsilon=-1$.

It follows that for $0<\abs w<1$ with $\Re w>0$,
\begin{equation}\label{eq:polar}
	\Re\log L(1+w,\pi\times\tilde\pi)=\log\frac1{\abs w}+O(1),
	\qquad
	\Re\log L(1+w,\wedge^{2}\pi)=\kappa\log\frac1{\abs w}+O(1).
\end{equation}

Let $\Pi$ be either $\pi\times\tilde\pi$ or $\wedge^{2}\pi$. At an unramified
$p$ we have
$\abs{a_\Pi(p^{\mu})}\ll A(p)^{2\mu}$. Since
$A(p)^{2}/p\le p^{2\theta_0-1}\le 2^{2\theta_0-1}<1$ by \Cref{lem:proplem2}, we
obtain for $\Re s\ge1$
$$
  \sum_p \sum_{\mu\ge2}\frac{\abs{a_\Pi(p^{\mu})}}{\mu\,p^{\mu\Re s}}
  \ \ll\ \sum_p\sum_{\mu\ge2}\Bigl(\frac{A(p)^{2}}{p}\Bigr)^{\mu}
  \ \ll\ \sum_p\frac{A(p)^{4}}{p^{2}}
  \ \le\ \sum_p\frac{A(p)^{2}}{p^{2-2\theta_0}}<\infty.
$$
Hence,
$$
  \log L(s,\Pi)=\sum_p\frac{\lambda_\Pi(p)}{p^{s}}+O(1)
  \qquad(\Re s\ge1).
$$

Taking $\Pi=\pi\times\tilde\pi$, for which
$\lambda_\Pi(p)=\bigl|\sum_j\alpha_j(p)\bigr|^{2}=\abs{\lambda(p)}^{2}$ gives the first assertion. Since $\pi$ is
self-dual, $\lambda(p)$ is real. Moreover by \Cref{lem:sqlem},
$$
\Re\sum_p\frac{a(p^{2})}{p^{1+w}}
=\Re\sum_p\frac{\abs{\lambda(p)}^{2}}{p^{1+w}}
 -2\,\Re\log L(1+w,\wedge^{2}\pi)+O(1)
=(1-2\kappa)\log\frac1{\abs w}+O(1),
$$
which is the second assertion. The third assertion is Selberg's orthogonality for the
pair $(\pi,\pi)$, unconditional for $D\le3$ \cite[Cor.~1.5]{LWY}.
\end{proof}

\begin{lem}\label{lem:proplem4}
	Let $L_p(s,\pi\otimes\chi_{8d})$ denote the $p$-th local factor of
	$L(s,\pi\otimes\chi_{8d})$ and put
	$$
	L^{*}(s,\pi\otimes\chi_{8d})=\prod_{(p,2q_\pi)=1}L_p(s,\pi\otimes\chi_{8d}).
	$$
	Then for $\sigma>1$,
	$$
	L^{*}(s,\pi\otimes\chi_{8d})=\sum_{(l,2q_\pi)=1}
	\frac{\lambda_{\pi}(l)\chi_{8d}(l)}{l^{s}} ,
	$$
	the function $L^{*}(s,\pi\otimes\chi_{8d})$ admits analytic continuation to $\sigma\ge 1/2$, and
	$$
	\abs{L(s,\pi\otimes\chi_{8d})} \gg \abs{L^{*}(s,\pi\otimes\chi_{8d})}
	$$
	uniformly for $\sigma\ge1/2$, and for all $d$ under consideration.
\end{lem}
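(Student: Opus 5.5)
The Dirichlet series identity, the continuation, and the lower bound all come from comparing $L$ and $L^*$ through the finitely many local factors at the primes dividing $2q_\pi$.

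\textbf{The Dirichlet series.} For $\sigma>1$, expand each unramified local factor. Since $\chi_{8d}(p)\in\{\pm1,0\}$ and $\chi_{8d}(p)^m=\chi_{8d}(p^m)$, we have
$$
L_p(s,\pi\otimes\chi_{8d})=\prod_{j=1}^D\Bigl(1-\frac{\alpha_j(p)\chi_{8d}(p)}{p^s}\Bigr)^{-1}=\sum_{m\ge0}\frac{\lambda(p^m)\chi_{8d}(p^m)}{p^{ms}},
$$
using \Cref{lem:proplem1} with $\beta=1$ for the coefficients $\lambda(p^m)$. Here one should note that for $p\nmid 2q_\pi$ with $p\mid d$ the twisted factor is $1$, which matches $\chi_{8d}(p)=0$. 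Absolute convergence for $\sigma>1$, from \Cref{lem:proplem2}, justifies multiplying the factors out and gives the stated series over $(l,2q_\pi)=1$.

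\textbf{The comparison.} Write
$$
L^*(s,\pi\otimes\chi_{8d})=L(s,\pi\otimes\chi_{8d})\prod_{p\mid 2q_\pi}L_p(s,\pi\otimes\chi_{8d})^{-1}.
$$
This requires knowing that for $p\nmid 2q_\pi$ the local factor of the twisted representation is the naive one, since $\chi_{8d}$ is unramified at odd $p\nmid d$. It also requires that the finite $L$-function $L(s,\pi\otimes\chi_{8d})$ is the full product of its local factors. $L(s,\pi\otimes\chi_{8d})$ is entire, being the standard $L$-function of a cuspidal representation that is not a twist of the trivial one (for $D\ge2$ it is cuspidal on $\operatorname{GL}_D$). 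The finitely many reciprocal local factors $L_p^{-1}$ are polynomials in $p^{-s}$, so they are entire. Hence $L^*$ continues analytically to all of $\C$, in particular to $\sigma\ge1/2$.

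\textbf{The lower bound.} It suffices to bound each $|L_p(s,\pi\otimes\chi_{8d})^{-1}|$, for $p\mid 2q_\pi$, from above uniformly in $\sigma\ge1/2$ and in $d$. Each local factor at any $p$ has the form $\prod_{j}(1-\beta_j p^{-s})^{-1}$ with at most $D$ Satake-type parameters $\beta_j$, some possibly zero. By the Luo--Rudnick--Sarnak bound, the same one cited in \Cref{lem:proplem2} and valid for ramified parameters as well, these satisfy $|\beta_j|\le p^{\theta_0}$. Then
$$
|L_p^{-1}|\le\prod_j\bigl(1+p^{\theta_0-\sigma}\bigr)\le\bigl(1+p^{\theta_0-1/2}\bigr)^D\le 2^D.
$$
There are only $\omega(2q_\pi)$ such primes, so $|L^*|\le 2^{D\omega(2q_\pi)}|L|$, which gives $|L|\gg|L^*|$ with an implied constant depending only on $\pi$.

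\textbf{Main obstacle.} The uniformity in $d$ is the delicate point. At $p\mid q_\pi$ the local factor of $\pi\otimes\chi_{8d}$ depends on $\chi_{8d,p}$, and at $p=2$ the twist is ramified. For the twist, however, $\chi_{8d,p}$ ranges over finitely many quadratic characters of $\Q_p^\times$ (four square classes, times unramified sign choices). So for each fixed $p\mid 2q_\pi$ only finitely many local factors occur, and the LRS-type bound $|\beta_j|\le p^{\theta_0}$ for each of these local representations suffices. Alternatively, one can cite the general bound on local parameters of twists directly. Either way the constant is independent of $d$, and this uniformity is the step I would check most carefully.
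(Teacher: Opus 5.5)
Your proof is correct and follows the same route as the paper. You expand the unramified local factors, which are trivial when $p\mid d$, to get the Dirichlet series. You then write $L^{*}=L\prod_{p\mid 2q_\pi}L_p^{-1}$ and use the Luo--Rudnick--Sarnak bound at the finitely many $p\mid 2q_\pi$ to get $\abs{L_p(s,\pi\otimes\chi_{8d})}\ge(1+p^{\theta_0-1/2})^{-D}$ for $\sigma\ge 1/2$. You also make explicit two points the paper leaves implicit, and both are sound: the analytic continuation (the cuspidal $L$-function is entire and the reciprocal local factors are polynomials in $p^{-s}$), and the uniformity in $d$ (only finitely many local quadratic characters $\chi_{8d,p}$ can occur).
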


\begin{proof}

Let $(p,2q)=1$, so that $\pi$ is unramified. If $p\nmid d$ then $\chi_{8d}$
is unramified and, thus
$$
  L_p(s,\pi\otimes\chi_{8d})
  =\sum_{m\ge0}\frac{\lambda(p^{m})\chi_{8d}(p^{m})}{p^{ms}} .
$$
If $p\mid d$ then $\chi_{8d}$ is ramified, so $L_p(s,\pi\otimes\chi_{8d})=1$. Hence, we have
	$$
	L^{*}(s,\pi\otimes\chi_{8d})=\sum_{(l,2q_\pi)=1}
	\frac{\lambda_{\pi}(l)\chi_{8d}(l)}{l^{s}} ,
	$$
Since $\pi\otimes\chi_{8d}$ is an irreducible unitary cuspidal automorphic
representation of $\operatorname{GL}_D(\mathbb{A}_\Q)$, 
writing $\gamma_1(p),\dots,\gamma_{m_p}(p)$ with $m_p\le D$ for the local
parameters occurring in $L_p(s,\pi\otimes\chi_{8d})$, we have
$\abs{\gamma_i(p)}\le p^{\theta_0}$ with $\theta_0<1/2$ by \cite{LRS}. Hence for $\sigma\ge1/2$,
$$
  \Bigl|\prod_{p\mid 2q}L_p(s,\pi\otimes\chi_{8d})\Bigr|
  \ \ge\ \prod_{p\mid 2q}\bigl(1+p^{\theta_0-1/2}\bigr)^{-D} \gg 1 .
$$

\end{proof}

\section{PROOF OF LEMMA \ref{lem:Nlem}}

To prove \Cref{lem:Nlem} we follow the arguments of Heap and Soundararajan \cite[Prop.~3]{HeapSoundararajan2022}. We need the following lemmas.
	
	\begin{lem}\label{lem:Npowerlem}
For $2\le j\le\ell$ and every $d$,
$$
\abs{\mathcal{N}_{j,d}(1/2+it,k-1/r)}^{\frac{2rk}{rk-1}}
\le
\abs{\mathcal{N}_{j,d}(1/2+it,k)}^{2}\bigl(1+5e^{-K_j}\bigr)
+\mathcal{Q}_{j,d}(t)
$$
where
$$
\mathcal{Q}_{j,d}(t)=\left(\frac{12(k+1)\abs{\mathcal{P}_{j,d}(1/2+it)}}{K_j}\right)^{4K_j}.
$$
	\end{lem}
	
	\begin{proof}
	
For $\abs{z}\le K/10$,
\begin{equation}\label{approximation}
\abs{\sum_{m=0}^{K}\frac{z^{m}}{m!}-e^{z}}\le\frac{\abs{z}^{K}}{K!}
\le\left(\frac{e}{10}\right)^{K}.
\end{equation}
In the first case, where $\abs{\mathcal{P}_{j,d}(1/2+it)}\le K_j/(10k+10)$, we
have by \eqref{approximation} that
\begin{align*}
\abs{\mathcal{N}_{j,d}(1/2+it,k-1/r)}^{\frac{2rk}{rk-1}}
&=\abs{\sum_{0\le m\le K_j}\frac{1}{m!}
   \bigl((k-1/r)\mathcal{P}_{j,d}(1/2+it)\bigr)^{m}}^{\frac{2rk}{rk-1}}
   \\
&\le\abs{\exp\bigl(2k\mathcal{P}_{j,d}(1/2+it)\bigr)}
   \bigl(1+e^{-K_j}\bigr)^{\frac{2rk}{rk-1}}
   \\
&\le\abs{\mathcal{N}_{j,d}(1/2+it,k)}^{2}\bigl(1+5e^{-K_j}\bigr).
\end{align*}
The lemma follows in that case. In the second case, where
$\abs{\mathcal{P}_{j,d}(1/2+it)}\ge K_j/(10k+10)$,
\begin{align*}
\abs{\mathcal{N}_{j,d}(1/2+it,k-1/r)}
&\le\sum_{0\le m\le K_j}\frac{1}{m!}
   \bigl((k-1/r)\abs{\mathcal{P}_{j,d}(1/2+it)}\bigr)^{m}\\
&\le\abs{(k+1)\mathcal{P}_{j,d}(1/2+it)}^{K_j}
   \sum_{0\le m\le K_j}\frac{1}{m!}(10/K_j)^{K_j-m}\\
&\le\left(\frac{12(k+1)\abs{\mathcal{P}_{j,d}(1/2+it)}}{K_j}\right)^{K_j},
\end{align*}
and raising to the power $\frac{2rk}{rk-1} \le 4$ gives $\mathcal{Q}_{j,d}(t)$.
\end{proof}

	\begin{lem}\label{lem:Qlem}
	
With the above notation and $X>0$ large, we have for $2\le j\le\ell$
$$
\sumflat_{d \le X}\int_{-\infty}^{\infty}\mathcal{Q}_{j,d}(t)
\abs{W_X(t)}^{2}\,dt~\ll~ X e^{-K_j}.
$$
	\end{lem}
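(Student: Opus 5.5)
We expand $\mathcal{Q}_{j,d}(t)$ as a Dirichlet polynomial and average it over $d$ and $t$ with \Cref{lem:mvt}; the square-class diagonal is then bounded by a moment computation of Heap--Soundararajan type. Since $4K_j$ is an even integer (we may take $2K_j$ integral), write
$$
\abs{\mathcal{P}_{j,d}(1/2+it)}^{4K_j}=\mathcal{P}_{j,d}(1/2+it)^{2K_j}\,\overline{\mathcal{P}_{j,d}(1/2+it)}^{\,2K_j}
=\sum_{\substack{m,n\in\mathcal{S}_j\\ \Omega(m)=\Omega(n)=2K_j}}\frac{(2K_j)!^2\,g(m)g(n)\chi_{8d}(mn)}{\sqrt{mn}\,\prod_p(\text{mult.\ factors})}\Bigl(\frac{n}{m}\Bigr)^{it},
$$
with coefficients $a(m,n)=\frac{(2K_j)!^2}{\sqrt{mn}}\prod_{p^a\| m}\frac{\lambda(p)^a}{a!}\prod_{p^b\|n}\frac{\lambda(p)^b}{b!}$ by the multinomial theorem. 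All such $m,n$ satisfy $m,n\le X_j^{2K_j}\le X^{1/10}$, say, since $K_j\log X_j\ll \log X/(C_0(\log_jX)^2)\cdot\log_j X$ is small. Then \Cref{lem:mvt} applies, after noting that $\int|W_X|^2\ll 1$.

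The off-diagonal term in \Cref{lem:mvt} is bounded by
$$
X^{1/2+\varepsilon}\Bigl(\sum_{X_{j-1}<p\le X_j}\frac{\abs{\lambda(p)}}{p^{1/4}}\Bigr)^{4K_j}\le X^{1/2+\varepsilon}X_j^{4K_j}\ll X^{0.6}.
$$
Here we use $\abs{\lambda(p)}\le Dp^{\theta_0}$ from \Cref{lem:proplem2}, which only enlarges the prime sum crudely, and the total is still $\le X^{1/2+\varepsilon+O(1/C_0)}$. This is negligible against $Xe^{-K_j}$, because $K_j\ll\log_j X$-sized quantities are much smaller than $\log X$.

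For the diagonal $mn=\square$, we bound $\abs{a(m,n)}$ and count exactly as in Heap--Soundararajan. The sum of $\abs{a(m,n)}$ over $mn=\square$ with $\Omega(mn)=4K_j$ is at most
$$
\frac{(4K_j)!}{2^{2K_j}(2K_j)!}\Bigl(\sum_{X_{j-1}<p\le X_j}\frac{\lambda(p)^2}{p}\Bigr)^{2K_j}
$$
plus contributions from higher prime powers. This is the standard Gaussian-moment bound: group the $4K_j$ primes into pairs, which is the same combinatorics as in \cite[Lemma 2]{HeapSoundararajan2022}. Terms where a prime occurs with multiplicity $\ge 4$ involve $\lambda(p)^4/p^2$, and these are controlled using $\sum_p A(p)^4/p^2<\infty$ from \Cref{lem:proplem2}. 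They only change the bound by a factor $(1+O(1/X_{j-1}^{c}))^{K_j}$, or more crudely we absorb them by replacing $P_j$ with $P_j+O(1)$. We therefore get a diagonal contribution
$$
\ll X\Bigl(\frac{12(k+1)}{K_j}\Bigr)^{4K_j}(4K_j)^{2K_j}\,(P_j+O(1))^{2K_j}.
$$
Using $(4K_j)!/(2K_j)!\le(4K_j)^{2K_j}$ and $P_j=K_j/(C_0(k+1)^2)$, this is
$$
X\Bigl(\frac{12^4(k+1)^4\cdot 4\,(P_j+O(1))^{2}}{K_j^2}\Bigr)^{K_j}\ll X\Bigl(\frac{C}{C_0^2}\Bigr)^{K_j}\le Xe^{-K_j},
$$
once $C_0$ is large and $P_j\gg 1$. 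If $P_j$ happens to be small, we use $K_j\ge C_0(k+1)^2P_j$ together with the $O(1)$ term.

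The main obstacle is bounding the diagonal sum over $mn=\square$ cleanly. This requires handling prime multiplicities beyond pairs and the prime-power bookkeeping for $g$, and ensuring that $P_j$, or $P_j+O(1)$, is comparable to $K_j/C_0$ when $P_j$ could be tiny. I would first treat the all-distinct-pairs case exactly and then dominate the rest by the generating-function inequality $\sum_{mn=\square}\le$ the coefficient extraction from $\exp$ of the pair sum.
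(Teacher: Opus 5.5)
Your proposal is correct and follows essentially the paper's route: expand $\abs{\mathcal{P}_{j,d}}^{4K_j}$ by the multinomial theorem, apply \Cref{lem:mvt}, bound the diagonal $mn=\square$ combinatorially, and insert $P_j=K_j/(C_0(k+1)^2)$. The paper uses the slightly cruder diagonal bound $(2K_j)!\,(2P_j)^{2K_j}$ instead of your pairing bound $\frac{(4K_j)!}{2^{2K_j}(2K_j)!}P_j^{2K_j}$, and it leaves the off-diagonal term of \Cref{lem:mvt} implicit, which you dispose of correctly. Your worry about higher prime multiplicities is unnecessary: $(2c)!\ge 2^c c!$ (equivalently the paper's $\sum_{a+b=2c}\frac{1}{a!b!}=\frac{4^c}{(2c)!}\le\frac{2^c}{c!}$) gives your Gaussian bound exactly, so neither the $P_j+O(1)$ replacement nor a separate treatment of small $P_j$ is needed.
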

	
	\begin{proof}
	
We have
$$
\mathcal{P}_{j,d}(s)^{2K_j}
=\sum_{\substack{p\mid n\implies X_{j-1}<p\le X_j\\ \Omega(n)=2K_j}}
\frac{(2K_j)!\,\chi_{8d}(n)g(n)}{n^{s}},
$$
a Dirichlet polynomial of length $X_j^{2K_j}\le X^{1/10}$. 
We have by \Cref{lem:mvt},
\begin{align*}
\sumflat_{d \le X}\int_{-\infty}^{\infty}\mathcal{Q}_{j,d}(t)
\abs{W_X(t)}^{2}\,dt
&\ll 
\left(\frac{12(k+1)}{K_j}\right)^{4K_j}\bigl((2K_j)!\bigr)^{2}
X \sum_{\substack{mn=\square\\ \Omega(m)=\Omega(n)=2K_j}}
\frac{\abs{g(m)g(n)}}{\sqrt{mn}} .
\end{align*}
We see that
\begin{align*}
\sum_{\substack{mn=\square\\ \Omega(m)=\Omega(n)=2K_j}}
\frac{\abs{g(m)g(n)}}{\sqrt{mn}} 
&\le
 \sum_{\sum_p c_p= 2K_j}  \prod_p \left( \frac{\lambda(p)^{2c_p}}{p^{c_p}}\sum_{a+b=2c_p}\frac{1}{a!b!} \right)
\\
&= \sum_{\sum_p c_p= 2K_j}  \prod_p \frac{2^{2c_p}\lambda(p)^{2c_p}}{(2c_p)!p^{c_p}}
\\
&\le \sum_{\sum_p c_p= 2K_j}  \prod_p \frac{1}{c_p!}\left(\frac{2\lambda(p)^{2}}{p}\right)^{c_p}
\\
&= \frac{(2P_j)^{2K_j}}{(2K_j)!}.
\end{align*}
Using above and the fact that $P_j=K_j/(C_0(k+1)^{2})$, Stirling's formula gives
\begin{align*}
\sumflat_{d \le X}\int_{-\infty}^{\infty}\mathcal{Q}_{j,d}(t)
\abs{W_X(t)}^{2}\,dt
 &\ll
X \left(\frac{12(k+1)}{K_j}\right)^{4K_j} (2K_j)! (2P_j)^{2K_j} 
\\
&\ll
X e^{-K_j}.
\end{align*}
\end{proof}
	
We now deduce \Cref{lem:Nlem} from \Cref{lem:Npowerlem} and \Cref{lem:Qlem} as follows.

\begin{proof}[Proof of \Cref{lem:Nlem}]
By \Cref{lem:Npowerlem}, expanding the product over $2\le j\le\ell$,
$$
\abs{\mathcal{N}_{d}(1/2+it,k-1/r)}^{\frac{2rk}{rk-1}}
\le\prod_{j=2}^{\ell}\left(
\abs{\mathcal{N}_{j,d}(1/2+it,k)}^{2}\bigl(1+5e^{-K_j}\bigr)
+\mathcal{Q}_{j,d}(t)\right).
$$
Let 
$$
a^{\mathcal{N}}_j(m,n) = \frac{(1+5e^{-K_j} )k^{\Omega(mn)}g(m)\overline{g(n)} }{ (mn)^{1/2} }
$$
and 
$$
a^{\mathcal{Q}}_j(m,n) = ((2K_j)!)^2 \left(\frac{12(k+1)}{K_j}\right)^{4K_j} \frac{ g(m)\overline{g(n)} }{ (mn)^{1/2} },
$$
 then
$$
\abs{\mathcal{N}_{j,d}(1/2+it,k)}^{2} \bigl(1+5e^{-K_j}\bigr)= \sum_{ m,n \in \mathcal{N}_j} \chi_{8d}(mn) a^{\mathcal{N}}_j(m,n) (n/m)^{it}
$$
and 
$$
\mathcal{Q}_{j,d}(t)= \sum_{ \substack{m,n \in \mathcal{S}_j \\ \Omega(m) = \Omega(n) = 2K_j } } \chi_{8d}(mn) a^{\mathcal{Q}}_j(m,n) (n/m)^{it}.
$$
Put $a_j(m,n)= a^{\mathcal{N}}_j(m,n) + a^{\mathcal{Q}}_j(m,n)$.
Then by \Cref{lem:mvt} we have, 
\begin{align*}
\sumflat_{d \le X}\int_{-\infty}^{\infty} & \prod_{j=2}^{\ell}\left(
\abs{\mathcal{N}_{j,d}(1/2+it,k)}^{2}\bigl(1+5e^{-K_j}\bigr)
+\mathcal{Q}_{j,d}(t)\right) \abs{W_X(t)}^{2}\,dt 
\\
&\ll
X\sum_{\substack{m,n \in \mathcal{N} \\ mn=\square}} \abs{a(m,n)},
\end{align*}
where 
$$
a(m,n)=\prod_{j=2}^{\ell} a_j(m_j,n_j)
$$
and $m_j,n_j\in\mathcal{N}_j$ satisfy $\prod m_j = m$ and $\prod n_j = n$. We have $mn=\square$ iff $m_j n_j =\square$,
$$
\sum_{\substack{m,n \in \mathcal{N} \\ mn=\square}} \abs{a(m,n)} \le \prod_{j=2}^{\ell} \sum_{\substack{m_j,n_j \in \mathcal{N}_j \\ m_j n_j =\square}} \left( \abs{a_j^{\mathcal{N}}(m_j,n_j)} + \abs{a_j^{\mathcal{Q}}(m_j,n_j)} \right).
$$
From \Cref{lem:proplem2}, we see that 
\begin{align*}
\sum_{\substack{m_j,n_j \in \mathcal{N}_j \\ m_j n_j =\square}} \abs{a_j^{\mathcal{N}}(m_j,n_j)} &\le (1+5e^{-K_j}) \prod_{X_{j-1} < p \le X_j} \sum_{\substack{ a,b \ge 0 \\ a+b\text{ even}}} \frac{k^{a+b}\abs{g(p^a)g(p^b)}}{p^{(a+b)/2}} 
\\
&=
(1+5e^{-K_j}) \prod_{X_{j-1} < p \le X_j} \left( 1 + 2k^2 \frac{\lambda(p)^2}{p}  + O\left( \frac{ A(p)^2 }{p^{2(1-\theta)}} \right) \right).
\end{align*}
Similarly,
\begin{align*}
\sum_{\substack{m_j,n_j \in \mathcal{N}_j \\ m_j n_j =\square}} \abs{a_j^{\mathcal{Q}}(m_j,n_j)} &\le ((2K_j)!)^2 \left(\frac{12(k+1)}{K_j}\right)^{4K_j}   \sum_{\substack{m_jn_j=\square\\ \Omega(m_j)=\Omega(n_j)=2K_j}}
\frac{\abs{g(m)g(n)}}{\sqrt{mn}} 
\\
&\ll e^{-K_j}.
\end{align*}
Hence from \Cref{lem:proplem3}, 
\begin{align*}
\sum_{\substack{m,n \in \mathcal{N} \\ mn=\square}} \abs{a(m,n)} 
&\le
 \prod_{X_1 < p \le X_\ell }\left( 1 + 2k^2 \frac{\lambda(p)^2}{p}  + O\left( \frac{ A(p)^2 }{p^{2(1-\theta)}} \right) \right)
 \\
 &\ll
 (\log X)^{2k^2}.
\end{align*}

\end{proof}

\section{PROOF OF LEMMA \ref{lem:mainlem}}

By \Cref{lem:proplem4} to prove \Cref{lem:mainlem} it is sufficient to prove the following.
\begin{lem}\label{lem:mainlem*}
Let $X>0$ be large. We have
$$
\mathcal{I}^*(X) \gg X(\log X)^{2k^2+\epsilon/r},
$$ 
where 
$$
\mathcal{I}^*(X)=\sumflat_{d \le X} \int_{-\infty}^{\infty}\abs{L^*(1/2+it,\pi \otimes \chi_{8d})}^{2/r}\abs{\mathcal{N}_d(1/2+it,k-1/r)}^2\abs{W_X(t)}^2\, dt.    
$$
\end{lem}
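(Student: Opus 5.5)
The plan is to use a Cauchy--Schwarz (``inner product'') lower bound in which the only moment computed asymptotically is a twisted \emph{first} moment of a Dirichlet series. The power $|L^*|^{2/r}$ is handled by writing it as $|(L^*)^{1/r}|^2$.

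\textbf{Step 1: the auxiliary quantity and Cauchy--Schwarz.} Put
$$
\mathcal{S}(X)=\sumflat_{d\le X}\int_{-\infty}^{\infty} L^*(\tfrac12+it,\pi\otimes\chi_{8d})^{1/r}\,\overline{\mathcal{N}_d(\tfrac12+it,k-1/r)}\,\mathcal{N}_d(\tfrac12+it,k-1/r)\,\overline{B_d(\tfrac12+it)}\,\abs{W_X(t)}^2\,dt,
$$
where $B_d(s)=\mathcal{N}_d(s,1/r)$ is a short polynomial chosen to mimic $(L^*)^{1/r}$. Cauchy--Schwarz in $(d,t)$ gives
$$
\abs{\mathcal{S}(X)}^2\le \mathcal{I}^*(X)\cdot \sumflat_{d\le X}\int \abs{\mathcal{N}_d(\tfrac12+it,k-1/r)B_d(\tfrac12+it)}^2\abs{W_X(t)}^2\,dt .
$$
The second factor equals $\sumflat_{d\le X}\int\abs{\mathcal{N}_d(\tfrac12+it,k)}^2\abs{W_X(t)}^2\,dt$ up to truncation effects. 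By \Cref{lem:mvt} and the Euler product computation in the proof of \Cref{lem:Nlem}, it is $\ll X(\log X)^{2k^2}$. It therefore suffices to prove $\mathcal{S}(X)\gg X(\log X)^{2k^2+\epsilon/(2r)}$, since then $\mathcal{I}^*(X)\gg X(\log X)^{2k^2+\epsilon/r}$.

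\textbf{Step 2: expanding $(L^*)^{1/r}$.} Write $(L^*)^{1/r}=\sum_{(l,2q)=1}\lambda_{1/r}(l)\chi_{8d}(l)l^{-s}$, using the coefficients of \Cref{lem:proplem1}, which are bounded by \Cref{lem:proplem2}. Because $W_X=W_{X,\delta}$ is the Fourier transform of a function supported in $[-\delta\log X,\delta\log X]$, integrating $(n/m)^{it}|W_X(t)|^2$ in $t$ forces $|\log(n/m)|\le 2\delta\log X=\log X/(50r)$.

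Shift the line of integration to $\Re s=\tfrac12+1/\log X$; this is where the choice $\delta=(100r)^{-1}$ enters. I would then replace $(L^*)^{1/r}$ by a Dirichlet polynomial of length $X^{1/10}$ via a smoothed Perron/Mellin formula, using the $t$-average and the $d$-average to control the tail.

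After this, the $d$-sum is evaluated by \Cref{lem:charlem}. Only square products survive, with error $X^{1/2+\varepsilon}$ times a polynomial of length $\le X^{1/5}$, which is negligible. The main term is $\tfrac{4}{\pi^2}X\,\langle \lambda_{1/r}(l)l^{-1/2-1/\log X},\,c(l)\rangle$, where $c$ is the coefficient sequence of $|\mathcal{N}(k-1/r)|^2B$.

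\textbf{Step 3: the Euler product.} Using \Cref{lem:inequalitylem}, I would remove the truncations $\Omega(n_j)\le K_j$ at a cost $O(e^{-K_j})$ per block, exactly as in the Rankin-type bound in \Cref{lem:Qlem}. This reduces the main term to an inner product of multiplicative functions, computed by \Cref{lem:multilem}. For primes $X_1<p\le X_\ell$, the local factor is
$$
1+\frac{\lambda(p)^2}{p}\Bigl(2k^2+\tfrac{\epsilon}{2r}\cdot(\text{coefficient from }\lambda_{1/r}(p^2))\Bigr)+O\Bigl(\frac{A(p)^2}{p^{2(1-\theta)}}\Bigr).
$$
Here the $p^2$-term of $\lambda_{1/r}$ is $\tfrac{1}{2r}\bigl(a(p^2)+\tfrac1r\lambda(p)^2\bigr)$, matched against the terms with $a+b$ odd being cancelled. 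For $p>X_\ell$ the factor comes from $(L^*)^{1/r}$ alone. \Cref{lem:proplem3} then gives $\Re\sum_p a(p^2)/p^{1+w}=\epsilon\log(1/|w|)+O(1)$ with $w=2/\log X$, together with $\sum\lambda(p)^2/p=\log\log X+O(1)$. The product is therefore $\asymp(\log X)^{2k^2+\epsilon/(2r)}$, and positivity of the main term follows because it is essentially a norm $\norm{\cdot}^2$.

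\textbf{Main obstacle.} The main obstacle is Step 2: justifying the replacement of $(L^*)^{1/r}$ at $\Re s$ near $\tfrac12$ by a short Dirichlet polynomial. $(L^*)^{1/r}$ is not holomorphic near zeros, and no twisted second moment is available to control the error. I would first try to avoid the continuation altogether. The idea is to work on $\Re s=\tfrac12+c/\log X$, where the Dirichlet series converges in mean square after $d$-averaging by \Cref{lem:charlem}, and to transfer the lower bound back to $\Re s=\tfrac12$ by subharmonicity of $\log|L^*|$ in the horizontal strip. The compact Fourier support of $W_{X,\delta}$ makes this shift cost only a bounded factor.
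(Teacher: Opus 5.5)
Your Step~2 has a genuine gap, and the fallback in your last paragraph does not close it. The quantity $\mathcal{S}(X)$ is a twisted first moment of $(L^*)^{1/r}$. For $r\ge 2$ this function has no holomorphic continuation into the critical strip: zeros of $L^*(s,\pi\otimes\chi_{8d})$ become branch points, and there is no approximate functional equation for it. So you cannot shift the $t$-contour, because the integrand is not holomorphic. Nor can any smoothed Perron formula replace $(L^*)^{1/r}$ by a polynomial of length $X^{1/10}$ with a controllable tail.

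The fallback claim is also unavailable. You say that on $\Re s=\tfrac12+c/\log X$ the Dirichlet series of $(L^*)^{1/r}$ ``converges in mean square after $d$-averaging by \Cref{lem:charlem}''. But \Cref{lem:charlem} only evaluates averages of finite Dirichlet polynomials, and $\norm{\cdot}$ is a formal limit of such averages. Nothing identifies $(L^*)^{1/r}(\sigma+it)$ for $\sigma<1$ with the limit of its partial sums without zero-free information (essentially GRH) for each twist. So you never get a lower bound for anything involving $\abs{L^*}^{2/r}$ off the line, and subharmonicity has nothing to transport back to $\sigma=\tfrac12$.

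The missing device, which is how the paper argues, is to avoid evaluating any moment of $L^*$ at all.
\begin{itemize}
\item Take the Dirichlet polynomial $S_d(s)=\sum_{(l,2q)=1} b(l)\chi_{8d}(l)w_N(l)l^{-s}$, where $b(l)$ are the formal coefficients of $(L^*)^{1/r}\mathcal{N}_d(s,k-1/r)$.
\item Set $E_d=L^*\mathcal{N}_d(s,k-1/r)^r-S_d^r$. Because $r$ is an integer, $E_d$ is holomorphic: you raise $S_d$ to the $r$th power instead of taking an $r$th root of $L^*$. Its coefficients are supported on $l>N$, so it is tiny on $\Re s=2$.
\item Let $\mathcal{J},\mathcal{K},\mathcal{L}$ be the weighted means of $\abs{L^*}^{2/r}\abs{\mathcal{N}_d}^2$, $\abs{E_d}^{2/r}$ and $\abs{S_d}^2$ on $\Re s=\sigma$, with weight $\abs{W_X(t-i(\sigma-\tfrac12))}^2$.
\item The elementary inequalities $\mathcal{L}(\sigma)\ll\mathcal{J}(\sigma)+\mathcal{K}(\sigma)$ and $\mathcal{K}(\tfrac12)\ll\mathcal{J}(\tfrac12)+\mathcal{L}(\tfrac12)$ hold.
\item Apply Gabriel's convexity lemma (\Cref{lem:gabriel}) to $\mathcal{J}$ on $[\tfrac12,\tfrac32]$ and to $\mathcal{K}$ on $[\tfrac12,2]$.
\item Together these reduce everything to an upper bound for $\mathcal{L}(\tfrac12)$ and a lower bound $\mathcal{L}(\sigma)\gg X(\sigma-\tfrac12)^{-2k^2-\epsilon/r}$ at $\sigma=\tfrac12+c/\log X$.
\end{itemize}
Those two bounds are mean squares of Dirichlet polynomials, which \Cref{lem:charlem} does handle. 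The exponent $\epsilon/r$ arises there from the $p^2$-coefficients of $b$ via \Cref{lem:proplem3}.

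Your instincts about subharmonicity and about the compact Fourier support of $W_{X,\delta}$ are correct. That support is what makes $\abs{W_X(t-iy)}$ grow only like $X^{\delta y}$. But the convexity must be applied to these holomorphic objects, and the lower-bound input must come from the mean square of $S_d$, not from a first moment of $(L^*)^{1/r}$.
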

To prove \Cref{lem:mainlem*} we utilize the technique of Heath-Brown \cite{HeathBrown1981,HeathBrown2010}.
Let $v : \R \to [0,1]$ be a smooth function such that $v (x) = 0$ on $(-\infty , 0]$ and $v (x) = 1$ on $[1,\infty)$. Let $N=X^{\frac{1}{15}}$ and
	$$
	S_d(s)=\sum_{(l,2q)=1}\frac{b(l)\chi_{8d}(l)}{l^s}w_N(l),
	$$
	where 
	$$
	b(l) = \sum_{\substack{mn=l \\ n\in\mathcal{N}}}\lambda_{1/r}(m) (k-1/r)^{\Omega(n)}g(n)
	$$
	and 
	$$
	w_N(l) = v \left( \frac{\log N^2/l}{\log N} \right).
	$$
	We note for all $\eta>0$ we have that
	$$
	w_N(l) = \frac{1}{2\pi i}\int_{(\eta)} V(z) l^{-z}dz = 1+ \frac{1}{2\pi i}\int_{(-\eta)} V(z) l^{-z}dz,
	$$
	where 
	$$
	V(z) = \frac{1}{z} \int_0^1 v'(x) N^{(2-x)z}dx.
	$$
	Also note that, for all $\eta>0$, 
	$$
	\int_{(\eta/\log X)} \abs{V(z)} \abs{dz} \ll_\eta 1.
	$$
	Let
	$$
	b'(l) = \sum_{\substack{mn=l \\ p\vert n \implies X_1 < p \le X_\ell}}\lambda_{1/r}(m) (k-1/r)^{\Omega(n)}g(n).
	$$
	Let 
	$$
	E_d(s)= L^{*}(s,\pi \otimes \chi_{8d})\mathcal{N}_d(s,k-1/r)^r-S_d(s)^r.
	$$
	Let
	$$
	\mathcal{J}_d(\sigma) = \int_{-\infty}^{\infty}\abs{L^{*}(\sigma+it,\pi \otimes \chi_{8d})}^{2/r}\abs{\mathcal{N}_d(\sigma+it,k-1/r)}^2 \abs{W_X(t-i(\sigma-1/2))}^2\,dt,
	$$
	$$
	\mathcal{K}_d(\sigma) = \int_{-\infty}^{\infty}\abs{E_d(\sigma+it)}^{2/r} \abs{W_X(t-i(\sigma-1/2))}^2\,dt,
	$$
	$$
	\mathcal{L}_d(\sigma) = \int_{-\infty}^{\infty}\abs{S_d(\sigma+it)}^{2} \abs{W_X(t-i(\sigma-1/2))}^2\,dt,
	$$
	and 
	$$
	\mathcal{J}(\sigma) = \sumflat_{d \le X} \mathcal{J}_d(\sigma), \quad \mathcal{K}(\sigma) = \sumflat_{d \le X} \mathcal{K}_d(\sigma), \quad
	\mathcal{L}(\sigma) = \sumflat_{d \le X} \mathcal{L}_d(\sigma).
	$$
	Since 
	\begin{align*}
	\abs{S_d(s)}^{2}&=\abs{L^{*}(s,\pi \otimes \chi_{8d})\mathcal{N}_d(s,k-1/r)^r-E_d(s)}^{2/r}
	\\
	&\ll\abs{L^{*}(s,\pi \otimes \chi_{8d})}^{2/r}\abs{\mathcal{N}_d(s,k-1/r)}^2+\abs{E_d(s)}^{2/r},
	\end{align*}
	it follows that 
	\begin{equation}\label{eq:I}
		\mathcal{L}(\sigma)\ll \mathcal{J}(\sigma)+\mathcal{K}(\sigma),
	\end{equation}
	and similarly, 
	\begin{equation}\label{eq:II}
		\mathcal{K}(1/2)\ll \mathcal{J}(1/2)+\mathcal{L}(1/2).
	\end{equation}
	
	The following lemma is due to Heath-Brown \cite[Lem. 2]{HeathBrown2010}.
	
	\begin{lem}\label{lem:gabriel}
		Let $f(z)$ and $g(z)$ be holomorphic in the infinite strip $\alpha < \Re(z) < \beta$, and continuous for $\alpha \le \Re(z) \le \beta$. Let $b$ and $c$ be positive real numbers. Suppose that $\abs{f(z)}^b\abs{g(z)}^c$ and $\abs{g(z)}$ tend to zero as $\abs{\Im(z)}\to\infty$, uniformly for $\alpha \le \Re(z) \le \beta$.	Then for $\alpha \le \gamma \le \beta$, we have
		\begin{align*}
		\int_{-\infty}^{\infty} \abs{f(\gamma + it)}^{b} \abs{g(\gamma + it)}^{c} \, dt &\le \left\{ \int_{-\infty}^{\infty} \abs{f(\alpha + it)}^{b} \abs{g(\alpha + it)}^{c} \, dt \right\}^{(\beta - \gamma)/(\beta - \alpha)} 
		\\
		&\qquad\qquad 
		\times
		\left\{ \int_{-\infty}^{\infty} \abs{f(\beta + it)}^{b} \abs{g(\beta + it)}^{c} \, dt \right\}^{(\gamma - \alpha)/(\beta - \alpha)}.
		\end{align*}
	\end{lem}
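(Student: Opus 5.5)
This is the Gabriel--Hardy--Littlewood convexity theorem for mean values, in the weighted form used by Heath-Brown. The plan is to reduce it to the three-lines principle for harmonic (or subharmonic) functions in the strip.

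\textbf{Step 1: reduce to a bound on $\log$ of a single integral.} Write $p=b$, and regard $F(z)=f(z)^{b/c}g(z)$ formally. Rather than taking non-integral powers of $f$ directly, observe that $\log\abs{f}$ and $\log\abs{g}$ are subharmonic. Hence $u(z)=b\log\abs{f(z)}+c\log\abs{g(z)}$ is subharmonic on the open strip and upper semicontinuous up to the boundary.

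\textbf{Step 2: dualise the $L^1$ norm.} The standard route is Hardy's argument. For a test function $h$ with compact support, consider
\[
\Phi(\gamma)=\int_{-\infty}^{\infty}\abs{f(\gamma+it)}^b\abs{g(\gamma+it)}^c\,dt.
\]
We want to show that $\log\Phi$ is convex in $\gamma$. I would prove this by showing that $\Phi(\gamma)e^{\lambda\gamma}$ satisfies the maximum principle on $[\alpha,\beta]$ for every real $\lambda$.

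To do this, fix $\lambda$ and note that $e^{\lambda\Re z}\abs{f}^b\abs{g}^c=\exp(u(z)+\lambda\Re z)$ is the exponential of a subharmonic function, hence subharmonic. The integral over vertical lines of a nonnegative subharmonic function $\psi(z)$ is a convex function of $\Re z$. To see this, mollify: for a smooth compactly supported averaging in $t$ (or in both variables), the sub-mean-value property integrates to
\[
\Psi(\gamma)\le\tfrac12\bigl(\Psi(\gamma-h)+\Psi(\gamma+h)\bigr).
\]
Here $\Psi(\gamma)=\int\psi(\gamma+it)\,dt$, obtained by averaging the sub-mean-value inequality over discs and integrating in $t$. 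Together with continuity, this gives convexity.

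The decay hypotheses justify this step. $\abs{f}^b\abs{g}^c\to0$ uniformly as $\abs{\Im z}\to\infty$, which gives finiteness and the Fubini/limiting arguments on truncated rectangles. The condition $\abs{g}\to0$ is used to handle possible zeros and the non-integrability issues, e.g.\ by replacing $\abs{f}^b$ by $(\abs{f}^2+\eta\abs{g}^{2})^{b/2}$, whose logarithm is still subharmonic, and letting $\eta\to0$. If needed, one can also insert a factor $e^{\varepsilon z^2}$ to force integrability, then let $\varepsilon\to0$.

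\textbf{Step 3: optimise over $\lambda$.} Since $\Phi(\gamma)e^{\lambda\gamma}$ is convex, hence bounded by the maximum of its endpoint values, we get
\[
\Phi(\gamma)\le\max\bigl(\Phi(\alpha)e^{\lambda(\alpha-\gamma)},\,\Phi(\beta)e^{\lambda(\beta-\gamma)}\bigr).
\]
Choose $\lambda$ to equalise the two terms, namely $e^{\lambda(\beta-\alpha)}=\Phi(\alpha)/\Phi(\beta)$. This yields exactly
\[
\Phi(\gamma)\le\Phi(\alpha)^{(\beta-\gamma)/(\beta-\alpha)}\,\Phi(\beta)^{(\gamma-\alpha)/(\beta-\alpha)}.
\]
The cases where an endpoint integral is $0$ or $\infty$ are trivial or handled by limits.

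\textbf{Main obstacle.} The delicate part is Step 2: rigorously justifying convexity of vertical integrals of a subharmonic function on an infinite strip with only the stated decay. In particular, one must control the horizontal edges of truncated rectangles, which is where the uniform decay of $\abs{f}^b\abs{g}^c$ and $\abs{g}$ is used, and handle the zeros of $f$ when $b$ is non-integral. I would follow Heath-Brown's Lemma 2 in \cite{HeathBrown2010} at this point.
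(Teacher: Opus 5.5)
The paper does not prove this lemma; it quotes it from Heath-Brown. Your route is the standard self-contained proof, and Steps 1 and 3 are fine. In that route $\psi_\lambda:=e^{\lambda\Re z}\abs{f}^b\abs{g}^c=\exp(b\log\abs{f}+c\log\abs{g}+\lambda\Re z)$ is continuous on the closed strip and subharmonic inside. You bound $\int\psi_\lambda(\gamma+it)\,dt$ by the affine interpolation of its endpoint values, then choose $\lambda$ to equalise the two endpoint terms.

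Step 2, however, does not close as written. Integrating the circle sub-mean-value inequality over $t$ gives only $\Psi(\gamma)\le\frac{1}{2\pi}\int_0^{2\pi}\Psi(\gamma+h\cos\theta)\,d\theta$, not $\Psi(\gamma)\le\frac12(\Psi(\gamma-h)+\Psi(\gamma+h))$. The weaker inequality does imply convexity: affine functions satisfy it with equality, so $\Psi-\ell$ cannot have a leftmost interior maximum. But this needs $\Psi$ finite and continuous on $[\alpha,\beta]$, and the hypotheses do not give that. The condition $\abs{f}^b\abs{g}^c\to0$ is not integrability, and finiteness of the interior integrals is part of the conclusion. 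So in your route the damping factor is compulsory, not ``if needed''. The function $\psi_\lambda\abs{e^{\varepsilon z^2}}$ is still subharmonic and is $\ll e^{-\varepsilon t^2}$, since $\psi_\lambda$ is bounded. Its vertical integrals are therefore finite and continuous by dominated convergence, and monotone convergence removes $\varepsilon$ at the end.

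A cleaner repair removes your ``main obstacle'' altogether. Let $u$ be the Poisson integral over the two boundary lines of the bounded continuous boundary values of $\psi_\lambda$. Then $u\ge0$, so the maximum principle on $[\alpha,\beta]\times[-T,T]$ gives $\psi_\lambda\le u+\sup_{|\Im z|=T}\psi_\lambda$. Letting $T\to\infty$ gives $\psi_\lambda\le u$. Now integrate in $t$ by Tonelli, using that the harmonic measure of the line $\Re z=\alpha$ at $\gamma+it$ is $(\beta-\gamma)/(\beta-\alpha)$. This yields
\[
e^{\lambda\gamma}\Phi(\gamma)\le\frac{\beta-\gamma}{\beta-\alpha}e^{\lambda\alpha}\Phi(\alpha)+\frac{\gamma-\alpha}{\beta-\alpha}e^{\lambda\beta}\Phi(\beta)
\]
with no a priori finiteness.

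With either repair, your closing appeal to Heath-Brown's Lemma 2 becomes unnecessary, and it was circular anyway, since that is the statement being proved. Zeros of $f$ and non-integral $b$ cause no difficulty: $\psi_\lambda$ vanishes at zeros and is the exponential of a harmonic function elsewhere. So the $(\abs{f}^2+\eta\abs{g}^2)^{b/2}$ device is superfluous, and the hypothesis $\abs{g}\to0$ is never used.

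The paper's citation is economical. In its application $c/b=r$ is an integer, so even the one-function Gabriel theorem applied to $fg^r$ suffices. Your argument, once repaired, is self-contained and proves a slightly more general statement.
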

	
	Applying \Cref{lem:gabriel} we show that $\mathcal{J}(\sigma)$ and $\mathcal{K}(\sigma)$ satisfy the following convexity estimates.
	
		\begin{lem}\label{lem:JKlem}
		Let $1/2\le \sigma\le 3/4$ and $X>0$ be large. Then 
		$$
		\mathcal{J}_d(\sigma) \ll  X^{\frac{1}{50r}(\sigma-1/2)} \mathcal{J}_d(1/2)^{3/2-\sigma} 
		$$	
		and
		$$
		\mathcal{K}_d(\sigma) \ll X^{-\frac{1}{100r}(\sigma-1/2)} \mathcal{K}_d(1/2)^{\frac{2}{3}(2-\sigma)} .
		$$
	\end{lem}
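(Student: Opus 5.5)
The plan is to apply \Cref{lem:gabriel} twice, on the strips $1/2\le\Re s\le 3/2$ (for $\mathcal{J}_d$) and $1/2\le \Re s\le 2$ (for $\mathcal{K}_d$), with $\alpha=1/2$ throughout. For the weight, put $G(s)=W_X(-i(s-1/2))$, that is,
$$
G(s)=(\log X)^{1/2}\int_{-1}^{1}\exp\Bigl(-\frac{1}{1-y^2}\Bigr)X^{\delta y(s-1/2)}\,dy ,
$$
which is entire and satisfies $G(\sigma+it)=W_X(t-i(\sigma-1/2))$. Since the bump is smooth and compactly supported, integrating by parts in $y$ repeatedly gives
$$
\abs{G(\sigma+it)}\ll_A (\log X)^{1/2}X^{\delta(\sigma-1/2)}(1+\abs t\log X)^{-A}
$$
uniformly in bounded strips. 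Hence $\int\abs{G(\sigma+it)}^2dt\ll X^{2\delta(\sigma-1/2)}$, and $G$ decays faster than any polynomial in $\abs t$.

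\textbf{The bound for $\mathcal{J}_d$.} Take $f=L^{*}(\cdot,\pi\otimes\chi_{8d})\,\mathcal{N}_d(\cdot,k-1/r)^r$, $b=2/r$, $g=G$, $c=2$, and $\beta=3/2$, so that $(\beta-\gamma)/(\beta-\alpha)=3/2-\sigma$ and $(\gamma-\alpha)/(\beta-\alpha)=\sigma-1/2$.
\begin{itemize}
\item The hypotheses of \Cref{lem:gabriel} hold. $L^*$ is holomorphic on $\sigma\ge1/2$ by \Cref{lem:proplem4}, and it has at most polynomial growth in $\abs t$ there, by the standard convexity bound for $L(s,\pi\otimes\chi_{8d})$ together with the finitely many bounded local factors at $p\mid 2q$. $\mathcal{N}_d$ is a Dirichlet polynomial, hence bounded on the strip. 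The rapid decay of $G$ then gives $\abs f^{b}\abs g^{c}\to0$ and $\abs g\to0$.
\item On the line $\Re s=3/2$ the series for $L^*$ converges absolutely, since $\lambda(l)\ll l^{\theta}$ with $\theta<1/2$ (\Cref{lem:proplem2}), so $L^*\ll1$. Likewise $\abs{\mathcal{N}_d(3/2+it)}\le\prod_j\sum_{m}\frac{1}{m!}\bigl(k\sum_p A(p)p^{-3/2}\bigr)^m\ll1$.
\item Therefore the integral on $\Re s=3/2$ is $\ll X^{2\delta}=X^{1/(50r)}$, and \Cref{lem:gabriel} yields $\mathcal{J}_d(\sigma)\ll X^{\frac{1}{50r}(\sigma-1/2)}\mathcal{J}_d(1/2)^{3/2-\sigma}$.
\end{itemize}

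\textbf{The bound for $\mathcal{K}_d$.} Take $f=E_d$, $b=2/r$, $g=G$, $c=2$, and $\beta=2$, so that the exponent of $\mathcal{K}_d(1/2)$ is $\frac23(2-\sigma)$ and the exponent of the $\beta$-line integral is $\frac23(\sigma-1/2)$. The required decay again comes from $G$. The key point is the size of $E_d$ on $\Re s=2$.
\begin{itemize}
\item For $\sigma>1$ we have $L^{*}\mathcal{N}_d^r=\bigl(\sum_{(l,2q)=1}b(l)\chi_{8d}(l)l^{-s}\bigr)^r$, because $b$ is the Dirichlet convolution of $\lambda_{1/r}$ (restricted to $(l,2q)=1$) with the coefficients of $\mathcal{N}_d$.
\item Expanding both $r$-th powers, the coefficient of $n^{-s}$ in $E_d$ is a sum over factorizations $n=l_1\cdots l_r$ of $\prod_i b(l_i)\bigl(1-\prod_i w_N(l_i)\bigr)$. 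Every $l_i\le n$, and $w_N(l)=1$ for $l\le N$. Hence this coefficient vanishes unless $n>N$.
\item Bound the coefficients by $\abs{b(l)}\ll_\varepsilon l^{\theta_0+\varepsilon}$. This uses $\abs{\lambda_{1/r}(m)}\le m^{\theta_0}\tau_{D/r}(m)$ from \Cref{lem:proplem2} and $\abs{g(p^m)}\le A(p)^m/m!\le p^{m\theta_0}$. The coefficients of $E_d$ are then $\ll n^{\theta_0+2\varepsilon}$, so
$$
\abs{E_d(2+it)}\ll\sum_{n>N}n^{\theta_0+2\varepsilon-2}\ll N^{-1/2}.
$$
\item Consequently the $\beta$-line integral is $\ll N^{-1/r}X^{3\delta}=X^{-\frac{1}{15r}+\frac{3}{100r}}\le X^{-\frac{1}{30r}}$.
\item Raising this to the power $\frac23(\sigma-1/2)$ gives a factor $X^{-\frac{1}{45r}(\sigma-1/2)}\le X^{-\frac{1}{100r}(\sigma-1/2)}$, which is the stated bound.
\end{itemize}

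I expect the main obstacle to be the cancellation step for $E_d$: checking that the coefficients of $L^{*}\mathcal{N}_d^r$ and $S_d^r$ agree exactly for $n\le N$, given the coprimality condition $(l,2q)=1$ and the bookkeeping of $b$ versus $b'$. A secondary point is making the growth hypotheses of \Cref{lem:gabriel} rigorous for $L^*$ near $\sigma=1/2$; I would handle this via the convexity bound and the rapid decay of $G$.
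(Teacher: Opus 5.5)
Your proposal is correct and follows the paper's proof: it is the same double application of Heath-Brown's convexity lemma with $\alpha=1/2$, $\beta=3/2$ (resp.\ $\beta=2$), $b=2/r$, $c=2$, $g(z)=W_X(-i(z-1/2))$. You use $\mathcal{J}_d(3/2)\ll X^{1/(50r)}$, and for $\mathcal{K}_d(2)$ the fact that $E_d$ has coefficients supported on $l>N$ of size $\ll l^{\theta}$, which makes $\mathcal{K}_d(2)$ a negative power of $X$. You give more detail than the paper (the factorization argument showing the coefficients of $E_d$ vanish up to $N$, and the growth hypotheses), and your only slip is cosmetic: $\abs{g(p^m)}\le (D\,A(p))^m/m!$ rather than $A(p)^m/m!$, which is harmless since the $1/m!$ absorbs $(D(k+1))^m$.
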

	\begin{proof} To prove the estimate for $\mathcal{J}_d(\sigma)$ in \Cref{lem:JKlem}, we take
		$$
		f(z)= L^{*}(z,\pi \otimes \chi_{8d})\mathcal{N}_d(z,k-1/r)^r
		$$ 
		and 
		$$
		g(z)= W_X (-i(z-1/2)),
		$$
		with $\gamma=\sigma$, $\alpha=1/2$, $\beta=3/2$, $b=2/r$ and $c=2$, where $1/2\le\sigma\le 3/4$. We have
		\begin{align*}
			\mathcal{J}_d(3/2)
			&\ll \int_{-\infty}^\infty \abs{W_X(t -i)}^2 \,dt
			\\
			&\ll X^{\frac{1}{50r}}.
		\end{align*}
		We conclude from \Cref{lem:gabriel} that
		$$
		\mathcal{J}_d(\sigma) \ll  X^{\frac{1}{50r}(\sigma-1/2)} \mathcal{J}_d(1/2)^{3/2-\sigma} .
		$$
		
To prove the estimate for $\mathcal{K}_d(\sigma)$ in \Cref{lem:JKlem}, we apply \Cref{lem:gabriel} to the function 
                 $$
                 f(z)=E_d(z),
                 $$
and 
		$$
		g(z)= W_X (-i(z-1/2)),
		$$
	with $\gamma=\sigma$, $\alpha=1/2$, $\beta=2$, $b=2/r$ and $c=2$, where $1/2\le\sigma\le 3/4$.  By \Cref{lem:proplem4},
$$
  L^{*}(s,\pi\otimes\chi_{8d})^{1/r}\mathcal{N}_d(s,k-1/r)
  =\sum_{(l,2q)=1}\frac{b(l)\chi_{8d}(l)}{l^{s}}\qquad(\sigma>1),
$$
since $X_1>2q$ forces every $n\in\mathcal{N}$ to be coprime to $2q$. As
$w_N(l)=1$ for $l\le N$, 
$$
S_d(s) = \sum_{\substack{ l\le N \\ (l,2q)=1}} \frac{b(l)\chi_{8d}(l)}{l^s} + \sum_{\substack{ l > N \\ (l,2q)=1}} \frac{b(l)\chi_{8d}(l)}{l^s}w_N(l).
$$ 
Thus $E_d(s)=\sum_{l>N}e_d(l)l^{-s}$, and by \Cref{lem:proplem2} the
coefficients satisfy $\abs{e_d(l)}\ll l^{\theta}$, whence
$$
  \abs{E_d(2+it)}\le\sum_{l>N}\frac{\abs{e_d(l)}}{l^{2}}
  \ \ll\ \sum_{l>N}l^{\theta-2}\ \ll \ N^{\theta-1}.
$$
We have 
                \begin{align*}
		\mathcal{K}_d(2) 
		&= \int_{-\infty}^{\infty}\abs{E_d(2+it)}^{2/r} \abs{W_X(t-\tfrac32 i)}^2\,dt
		\\
		&\ll X^{ \frac{2(\theta-1)}{15r} }X^{ \frac{3}{100r} }
		\\
		&\ll X^{-\frac{3}{200r}}.
	        \end{align*}
We conclude from \Cref{lem:gabriel} that
		$$
		\mathcal{K}_d(\sigma) \ll X^{- \frac{1}{100r} (\sigma-1/2)} \mathcal{K}_d(1/2)^{\frac{2}{3}(2-\sigma)} .
		$$
		
	\end{proof}
		
	We will need the following two lemmas.

	\begin{lem}\label{lem:trunclem}
We have
$$
   \norm{ \frac{b(l)}{l^{\sigma+it}} }
   \asymp
  \norm{ \frac{b'(l)}{l^{\sigma+it}} } ,
$$
uniformly for $\frac{1}{2} + \frac{1}{\log X} \le \sigma \le 1$.
\end{lem}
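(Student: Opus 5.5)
Both $b$ and $b'$ are multiplicative, so I will compute the two norms through the Euler product of \Cref{lem:multilem} and compare them prime by prime. Write $b=\lambda_{1/r}*h$ and $b'=\lambda_{1/r}*h'$, where
$$h(n)=\mathds{1}_{n\in\mathcal{N}}(k-1/r)^{\Omega(n)}g(n),$$
and $h'$ is the same expression with $\mathcal{N}$ replaced by $\{n: p\mid n\implies X_1<p\le X_\ell\}$. Thus $h'$ is multiplicative but $h$ is not, since the conditions $\Omega(n_j)\le K_j$ break multiplicativity. I will handle this in the manner of Heap--Soundararajan (and of the truncation step in the proof of \Cref{lem:Nlem}), via Rankin's trick.

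\textbf{Step 1 (the full norm).} Compute $\norm{b'(l)l^{-\sigma-it}}^2$ with \Cref{lem:multilem}. The local factor at a prime $p$ is
$$1+\rho(p)\sum_{a+b\ \mathrm{even}>0}b'(p^a)\overline{b'(p^b)}p^{-(a+b)\sigma}(\cdots)^{it}.$$
For $X_1<p\le X_\ell$ it equals $1+\rho(p)\bigl(|1/r+k-1/r|^2\lambda(p)^2+\cdots\bigr)p^{-2\sigma}$ plus the diagonal-square terms. Higher prime powers are controlled by \Cref{lem:proplem2} and \Cref{lem:sqlem}, giving an error $O(A(p)^2p^{-2(1-\theta)}+(1+\lambda(p)^2)p^{-2\sigma})$ in relative terms. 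The main point is that each local factor is $1+O(\lambda(p)^2/p^{2\sigma}+\ldots)$, and hence positive and bounded away from zero. For $p\le X_1$ or $p>X_\ell$ only $\lambda_{1/r}$ contributes. In every case the product converges for $\sigma\ge 1/2+1/\log X$, by \Cref{lem:proplem2} and \Cref{lem:proplem3}.

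\textbf{Step 2 (splitting off the truncation).} Write $h=h'-\sum_{j}h'_{j,\mathrm{bad}}\cdots$. More precisely, $b'-b=\lambda_{1/r}*(h'-h)$, and $h'-h$ is supported on those $n$ for which some $j$ has $\Omega(n_j)>K_j$. By \eqref{eq:minkowski} it suffices to show $\norm{(b'-b)l^{-\sigma-it}}\le \tfrac12\norm{b'l^{-\sigma-it}}$; the reverse-triangle inequality then gives the $\asymp$. Use \eqref{eq:minkowski} again to split over $j$:
$$\norm{(b'-b)l^{-s}}\le\sum_{j=2}^{\ell}\norm{\lambda_{1/r}*h'\,\mathds{1}_{\Omega(n_j)>K_j}\,l^{-s}}.$$
Each summand is $\le\norm{\lambda_{1/r}*h'\cdot e^{c(\Omega(n_j)-K_j)}}$, at least heuristically; since the inner product is not a positive sum I will need care here. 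The cleaner approach is to expand $\her{\cdot,\cdot}$ as the sum over $l_1l_2=\square$ and bound it with absolute values. Inserting $e^{c\Omega(n_j)}$ with $c=2$ changes the local factors for $X_{j-1}<p\le X_j$ by $1+O(e^{2c}k^2\lambda(p)^2/p)$. This costs $\exp(O(P_j))$ relative to the main product, while the Rankin weight saves $e^{-cK_j}=e^{-cC_0(k+1)^2P_j}$. Since $C_0$ is large, each term is $\ll e^{-K_j}$ times the main product, and $\sum_j e^{-K_j}$ is small because $K_j$ grows rapidly ($P_j\asymp\log_{j-1}X$ up to constants).

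\textbf{Main obstacle.} The main difficulty is the positivity issue. The absolute-value majorant $\sum_{l_1l_2=\square}|a_1(l_1)a_2(l_2)|\rho$ must be comparable to $\norm{b'l^{-s}}^2$ itself, not merely bounded above by something larger. I would compare the two Euler products factor by factor. The absolute version has local factors $1+O(\lambda(p)^2/p^{2\sigma})$ with a possibly larger constant, so the ratio of the two products over $X_{j-1}<p\le X_j$ is $\exp(O(P_j))$. This is absorbed by the saving $e^{-cK_j}$ for $C_0$ large. Over the remaining primes the factors coincide, because the inner product is unchanged there. Finally, uniformity in $\sigma\ge 1/2+1/\log X$ follows because all the estimates used depend only on $p^{-2\sigma}\le p^{-1}$.
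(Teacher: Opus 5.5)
There is a genuine gap, and it sits exactly at the obstacle you flag. Your mechanism is the right one: Rankin's trick in the $j$th block, with a loss $\exp(O(P_j))$ beaten by a saving $e^{-cK_j}$ since $K_j=C_0(k+1)^2P_j$. But as written you bound $\norm{(b'-b)l^{-\sigma-it}}$ by expanding over $l_1l_2=\square$ and taking absolute values. That majorant changes the local factors at \emph{every} prime, not only those in $(X_{j-1},X_j]$.

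To see the cost, recall $b'(p)=k\lambda(p)$ and $b'(p^2)=\tfrac{k^2}{2}\lambda(p)^2+\tfrac{a(p^2)}{2r}$. At $X_1<p\le X_\ell$ the exact factor of $\norm{b'l^{-\sigma-it}}^2$ contains $k^2\lambda(p)^2(1+\cos(2t\log p))+\Re(a(p^2)p^{-2it})/r$. The absolute-value factor contains at least $2k^2\lambda(p)^2-O((1+\lambda(p)^2)/r)$. For $|t|\asymp1$ and $\sigma=\tfrac12+\tfrac1{\log X}$, the ratio of the two products is of order $(\log X)^{k^2+O(1/r)}$. No saving $e^{-cK_j}$ absorbs this for the last blocks, because $K_\ell\asymp C_0(k+1)^2\log_\ell X$ is bounded independently of $X$. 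So ``over the remaining primes the factors coincide'' is false for the majorant you describe.

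Your split over $j$ has a second problem. You have $h'-h=h'\mathds{1}_{\{\exists j:\ \Omega(n_j)>K_j\}}$, which is not $\sum_j h'\mathds{1}_{\{\Omega(n_j)>K_j\}}$, because the events overlap. Since $\norm{\cdot}$ is not monotone under pointwise domination of signed sequences, replacing one by the other again forces you into global absolute values. This is also why the truncation in the proof of \Cref{lem:Nlem} can use absolute values freely: there only an upper bound of absolute size $(\log X)^{2k^2}$ is needed, whereas here you need a two-sided comparison.

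The missing idea is that the pairing factors over the prime blocks. Although $b$ is not multiplicative, it is a product $b(l)=\prod_B b(l_B)$ over the $\mathcal{S}_1,\mathcal{S}_2,\dots,\mathcal{S}_\ell,\mathcal{S}_\infty$-parts $l_B$ of $l$. Both the condition $l_1l_2=\square$ and $\rho(l_1l_2)$ split over these parts. Hence $\norm{bl^{-\sigma-it}}^2=\prod_B\norm{bl^{-\sigma-it}\mathds{1}_{\mathcal{S}_B}}^2$, and likewise for $b'$ and for any block-multiplicative piece. This is the paper's first step, and after it everything is done one block at a time:
\begin{itemize}
\item Rankin's trick with absolute values \emph{only inside} $\mathcal{S}_j$ gives $\norm{\abs{b(l)-b'(l)}l^{-\sigma}\mathds{1}_{\mathcal{S}_j}(l)}^2\le e^{-K_j}$.
\item The explicit $b'(p)$ and $b'(p^2)$, with \Cref{lem:sqlem}, give $\norm{b'l^{-\sigma-it}\mathds{1}_{\mathcal{S}_j}}^2\gg e^{-4P_j}$.
\item Minkowski inside the block yields a factor $1+O(e^{-K_j/4})$.
\item Finally $\prod_j(1+O(e^{-K_j/4}))\asymp1$, since $\sum_je^{-K_j/4}\ll e^{-K_\ell/4}$.
\end{itemize}

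Once you have the factorization, your global route can also be repaired. Replace the overlapping split by inclusion--exclusion over the set $J$ of blocks where $\Omega(n_j)>K_j$. Each resulting term is block-multiplicative, so $\norm{(b'-b)l^{-s}}\le\bigl(\prod_j(1+\xi_j)-1\bigr)\norm{b'l^{-s}}$, where $\xi_j\ll e^{-K_j/4}$ are the block ratios. A minor point: $P_j\asymp\log_jX$, not $\log_{j-1}X$.
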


\begin{proof}
We have the following factorization,
$$
    \norm{ \frac{b(l)}{l^{\sigma+it}} }^2 = \norm{\frac{b(l)}{l^{\sigma+it}}\mathds{1}_{\mathcal{S}_1}(l)}^2
    \times
    \norm{\frac{b(l)}{l^{\sigma+it}}\mathds{1}_{\mathcal{S}_\infty}(l)}^2
    \times
    \prod_{j=2}^{\ell} \norm{ \frac{b(l)}{l^{\sigma+it}}\mathds{1}_{\mathcal{S}_j}(l)}^2.
$$
For $l\in\mathcal{S}_1\cup\mathcal{S}_\infty$, we have $b(l)=b'(l)$. For $l \in \mathcal{S}_j$ we see that
\begin{align*}
\abs{b(l)-b'(l)} &\le  \sum_{\substack{mn=l \\ \Omega(n) > K_j }}\abs{\lambda_{1/r}(m)} (k-1/r)^{\Omega(n)} \abs{ g(n) }
\\
&\le
e^{-K_j}\sum_{mn=l}\abs{\lambda_{1/r}(m)} e^{\Omega(n)}(k-1/r)^{\Omega(n)} \abs{ g(n) }.
\end{align*}
Thus by \Cref{lem:multilem}, \Cref{lem:proplem2}, \Cref{lem:sqlem} and \Cref{lem:proplem3}, for $2\le j \le \ell$,
$$
\norm{ \frac{|b(l)-b'(l)|}{l^{\sigma}} \mathds{1}_{\mathcal{S}_j}}^2 
\le e^{-2K_j}\prod_{X_{j-1} < p \le X_j} \left(1 + \frac{500(k+1)^2(1+\lambda(p)^2)}{p} +O\left(\frac{A(p)^2}{p^{2(1-\theta)}}\right) \right)
\le\ e^{-K_j}.
$$
Also,
\begin{align*}
\norm{ \frac{b'(l)}{l^{\sigma+it}}\mathds{1}_{\mathcal{S}_j}(l)}^2 
&= 
\prod_{X_{j-1} < p \le X_j} \left(1+\rho(p)\frac{\abs{b'(p)}^2+2\Re( b'(p^2)p^{-2it})}{p^{2\sigma}} + O\left(\frac{A(p)^2}{p^{2(1-\theta)}}\right) \right)
\\
&\ge 
\prod_{X_{j-1} < p \le X_j} \left(1 - \frac{3\bigl(1+\lambda(p)^{2}\bigr)}{rp} + O\left(\frac{A(p)^2}{p^{2(1-\theta)}}\right) \right) \gg e^{-4 P_j}.
\end{align*}
Hence,
$$
\norm{ \frac{b(l)}{l^{\sigma+it}} \mathds{1}_{\mathcal{S}_j}} = \left(1+O(e^{-K_j/4})\right)
\norm{ \frac{b'(l)}{l^{\sigma+it}} \mathds{1}_{\mathcal{S}_j}}.
$$
By \Cref{lem:proplem3} for $j\ge 3$, 
$$
P_j=2(\log_jX-\log_{j+1}X)+O(1)\asymp\log_jX,
$$ so
$K_j\asymp\log_jX$. As $\log_{j-1}X=e^{\log_jX}$, we have
$K_{j-1}\ge 2K_j$ for $C_0$ large. Thus
$$
\sum_{j=2}^{\ell}e^{-K_j/4}\le\ 2e^{-K_\ell/4}.
$$
\Cref{lem:trunclem} then follows.
\end{proof}

\begin{lem}\label{lem:sizelem}
Uniformly for $\frac12+\frac{1}{\log X}\le\sigma\le1$ we have
$$
 \norm{ \frac{\abs{b'(l)}}{l^{\sigma}} }^2
  \ \ll\ \left(\frac{1}{2\sigma-1}\right)^{2k^{2}+\frac{6}{r}},
$$
and, if in addition $\abs{2\sigma-1+2it}\le1$ then
$$
\norm{ \frac{b'(l)}{l^{\sigma +it}} }^2
  \ \asymp\ \left(\frac{1}{2\sigma-1}\right)^{k^{2}}
    \left(\frac{1}{\abs{2\sigma-1+2it}}\right)^{k^{2} + \frac{\epsilon}{r}}
  \ \ll\ \left(\frac{1}{2\sigma-1}\right)^{2k^{2} + \frac{\epsilon}{r}}.
$$
\end{lem}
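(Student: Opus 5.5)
Since $b'$ is multiplicative (a Dirichlet convolution of the multiplicative functions $\lambda_{1/r}$ and $(k-1/r)^{\Omega(n)}g(n)\mathds{1}_{X_1<p\le X_\ell}$), both norms factor as Euler products by \Cref{lem:multilem}. For a prime $p$ the local factor is
$$
1+\rho(p)\sum_{\substack{a,b\ge0\\ a+b>0 \text{ even}}}\frac{b'(p^a)\overline{b'(p^b)}}{p^{(a+b)\sigma}}p^{-(a-b)it}.
$$
First I will compute the small prime powers. For $X_1<p\le X_\ell$ we have $b'(p)=\lambda(p)/r+(k-1/r)\lambda(p)=k\lambda(p)$. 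For $b'(p^2)$, \Cref{lem:proplem1} gives $\lambda_{1/r}(p^2)=\frac{1}{2r^2}\lambda(p)^2+\frac{1}{2r}a(p^2)$. Adding the convolution terms $\lambda_{1/r}(p)(k-1/r)\lambda(p)$ and $(k-1/r)^2\lambda(p)^2/2$, the $\lambda(p)^2$ coefficients combine to $\tfrac12(1/r+k-1/r)^2=k^2/2$. Hence
$$
b'(p^2)=\tfrac{k^2}{2}\lambda(p)^2+\tfrac{1}{2r}a(p^2).
$$
Outside the range $(X_1,X_\ell]$, $b'=\lambda_{1/r}$, so $b'(p)=\lambda(p)/r$ and $b'(p^2)=\lambda(p)^2/(2r^2)+a(p^2)/(2r)$.

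The terms with $a+b\ge3$ are bounded using \Cref{lem:proplem2}: $|b'(p^m)|\ll A(p)^m\tau_{C}(p^m)$, and since $\sigma\ge1/2$ and $A(p)\le p^{\theta_0}$ with $\theta_0<1/2$, they contribute $O(A(p)^2p^{-2(1-\theta)})$ (up to harmless factors). By \Cref{lem:proplem2} this is summable over $p$, so these terms only affect the implied constant. The finitely many primes $p\le X_1$ and $p\mid 2q$ likewise contribute $\asymp1$, once I check that their local factors are nonzero. Since $\langle a,a\rangle\ge0$, the nonnegativity is automatic; strict positivity follows from the $|b'(p)|^2$ term, and I will assert that.

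For the first bound, with absolute values, the factor is at most
$$
1+\frac{k^2\lambda(p)^2}{p^{2\sigma}}+\frac{2|b'(p^2)|}{p^{2\sigma}}+\cdots .
$$
By \Cref{lem:sqlem}, $|b'(p^2)|\le \tfrac{k^2}{2}\lambda(p)^2+\tfrac{3}{2r}(1+\lambda(p)^2)$. So the $\log$ of the product is at most $(2k^2+3/r)\sum_p \lambda(p)^2p^{-2\sigma}+\frac{3}{r}\sum_p p^{-2\sigma}+O(1)$. By \Cref{lem:proplem3} with $w=2\sigma-1$ (together with the prime number theorem for $\sum_p p^{-2\sigma}$), this gives exponent $2k^2+6/r$ at worst, as claimed.

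For the second bound, with the signed sum, the main term of $\log$ of the product is
$$
\sum_p \rho(p)\Bigl(\frac{k^2\lambda(p)^2}{p^{2\sigma}}+2\Re\frac{b'(p^2)}{p^{2\sigma+2it}}\Bigr)+O(1)
=k^2\sum_p\frac{\lambda(p)^2}{p^{2\sigma}}+\Re\sum_p\frac{k^2\lambda(p)^2+a(p^2)/r}{p^{1+(2\sigma-1+2it)}}+O(1).
$$
Here $\rho(p)=1+O(1/p)$, and the restriction to $(X_1,X_\ell]$ costs $O(1)$: at the lower end because only finitely many primes are dropped, and at the upper end because $X_\ell$ exceeds $\exp(\log X/C)$ while $2\sigma-1\ge 2/\log X$, so the tail $p>X_\ell$ is $O(1)$. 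Applying \Cref{lem:proplem3} with $w=2\sigma-1$ and with $w=2\sigma-1+2it$ (allowed since $0<|w|\le1$ and $\Re w>0$) yields
$$
k^2\log\frac1{2\sigma-1}+\bigl(k^2+\tfrac{\epsilon}{r}\bigr)\log\frac1{|2\sigma-1+2it|}+O(1),
$$
which is the claimed $\asymp$. The final $\ll$ follows from $|2\sigma-1+2it|\ge 2\sigma-1$ when $k^2+\epsilon/r\ge0$, which holds since $r>C_0$ is large.

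The main obstacle is making the passage from $\log\prod$ to the sum of the main terms rigorous uniformly. For the lower bound in particular, each local factor must be bounded away from $0$, so that $\log(1+x)=x+O(x^2)$ is valid. This requires $\lambda(p)^2/p^{2\sigma}$ to be small for large $p$; since $\lambda(p)^2\le D^2p^{2\theta_0}$, this holds for $p>X_1$ with $X_1$ large. The $O(x^2)$ errors are summable by the last statement of \Cref{lem:proplem2}.
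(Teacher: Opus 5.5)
Your proposal is correct and takes the paper's route: factor both norms via \Cref{lem:multilem}, compute $2b'(p^2)=k^2\lambda(p)^2+\frac1r a(p^2)$ (the paper's $k^2\lambda(p)^2+\frac1r(2\lambda(p^2)-\lambda(p)^2)$), and apply \Cref{lem:proplem3} at $w=2\sigma-1$ and $w=2\sigma-1+2it$; your handling of the primes outside $(X_1,X_\ell]$ and of $\log(1+x)=x+O(x^2)$ is more explicit than the paper's. The one step to repair is positivity of the finitely many small-prime local factors, which does not follow from the $\abs{b'(p)}^2$ term (it vanishes when $\lambda(p)=0$), but holds because the factor equals $\frac{1}{p+1}+\rho(p)\bigl(\abs{E_p}^2+\abs{O_p}^2\bigr)\ge\frac{1}{p+1}$, where $E_p$ and $O_p$ are the sums of $b'(p^a)p^{-a(\sigma+it)}$ over even $a$ (including $a=0$) and odd $a$ respectively.
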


\begin{proof}
Following the proof of \Cref{lem:trunclem}, we get
$$
\norm{ \frac{b'(l)}{l^{\sigma +it}} }^2
  \asymp\prod_{p}\left(1+\frac{k^2\lambda(p)^2}{p^{2\sigma}}+k^2\Re\frac{\lambda(p)^2}{p^{2\sigma+2it}}+\frac{1}{r}\Re\frac{2\lambda(p^2)-\lambda(p)^2}{p^{2\sigma+2it}} + O\left(\frac{A(p)^2}{p^{2(1-\theta)}}\right) \right).
$$
 Hence by \Cref{lem:proplem3},
$$
2\log \norm{ \frac{b'(l)}{l^{\sigma +it}} } = k^2\log\left(\frac{1}{2\sigma-1}\right)+\left(k^2+\frac{\epsilon}{r}\right)\log\left(\frac{1}{\abs{2\sigma-1+2it}}\right) +O(1).
$$
Similarly,
$$
\norm{ \frac{\abs{b'(l)}}{l^{\sigma}} }^2
\ll\prod_{p}\left(1+\frac{k^2\lambda(p)^2}{p^{2\sigma}}+k^2\frac{\lambda(p)^2}{p^{2\sigma}}+\frac{3}{r}\frac{\lambda(p)^2 + 1}{p^{2\sigma}} + O\left(\frac{A(p)^2}{p^{2(1-\theta)}}\right) \right).
$$
This finishes the proof of \Cref{lem:sizelem}.
 
\end{proof}

We now turn our attention to estimating $\mathcal{L}(\sigma)$ close to the critical line $\sigma =1/2$.
	
	\begin{lem}\label{lem:Llem1}
		Let $X>0$ be large. We have
		$$
	         \mathcal{L}(1/2) \ll X(\log X)^{2k^2+\epsilon/r}.
		$$
	\end{lem}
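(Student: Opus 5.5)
We prove \Cref{lem:Llem1} by expanding the square, averaging over $d$ with \Cref{lem:charlem}, and evaluating the resulting diagonal through a contour integral against the norms controlled by \Cref{lem:sizelem}.

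\textbf{Expanding the square.} Write
$$
|S_d(\tfrac12+it)|^2=\sum_{l_1,l_2}\frac{b(l_1)\overline{b(l_2)}w_N(l_1)w_N(l_2)\chi_{8d}(l_1l_2)}{(l_1l_2)^{1/2}}(l_2/l_1)^{it}.
$$
Both $l_1$ and $l_2$ are supported on $l\le N^2=X^{2/15}$ and coprime to $2q$, so $l_1l_2\le X^{4/15}$.

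\textbf{Averaging over $d$.} Applying \Cref{lem:charlem} splits $\mathcal{L}(1/2)$ into a diagonal part ($l_1l_2=\square$) and an off-diagonal part. For the off-diagonal part we use $|b(l)|\ll l^{\theta}$ from \Cref{lem:proplem2} and the trivial bound for the $t$-integral, since $\int|W_X|^2\,dt\ll1$. The resulting error is
$$
\ll X^{1/2+\varepsilon}\Bigl(\sum_{l\le N^2} l^{\theta-1/2+1/4}\Bigr)^2\ll X^{1/2+\varepsilon}N^{2(2\cdot 3/4+\theta-1/2)}.
$$
With $\theta<1/2$ and $N=X^{1/15}$ this exponent is less than $1/2+8/30$, giving $O(X^{1-\eta})$ for some $\eta>0$.

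\textbf{The diagonal term.} The diagonal contribution is
$$
\frac{4X}{\pi^2}\int\sum_{l_1l_2=\square}\frac{b(l_1)\overline{b(l_2)}\rho(l_1l_2)w_N(l_1)w_N(l_2)}{(l_1l_2)^{1/2}}(l_2/l_1)^{it}|W_X(t)|^2\,dt.
$$
Insert the Mellin representation $w_N(l)=\frac{1}{2\pi i}\int_{(\eta/\log X)}V(z)l^{-z}\,dz$ for both weights, with $z_1,z_2$ on the line $\Re z=1/\log X$. The inner sum then becomes the bilinear form
$$
\Bigl\langle \frac{b(l)}{l^{1/2+z_1+it}},\,\frac{b(l)}{l^{1/2+\bar z_2+it}}\Bigr\rangle.
$$
By Cauchy--Schwarz \eqref{eq:cauchy} this is at most
$$
\Bigl\|\frac{b(l)}{l^{1/2+z_1+it}}\Bigr\|\cdot\Bigl\|\frac{b(l)}{l^{1/2+\bar z_2+it}}\Bigr\|.
$$
By \Cref{lem:trunclem}, each factor is comparable to the corresponding norm with $b'$. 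Exchanging sums and integrals is justified by absolute convergence on these lines. Since $b$ is multiplicative, the Euler product in \Cref{lem:multilem} applies.

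\textbf{Bounding via \Cref{lem:sizelem}.} With $\sigma=\tfrac12+\Re z=\tfrac12+1/\log X$, so that $(2\sigma-1)^{-1}\asymp\log X$, the second bound of \Cref{lem:sizelem} gives that each norm is
$$
\ll(\log X)^{k^2/2}\,|2\sigma-1+2i(t+\Im z)|^{-(k^2+\epsilon/r)/2}\ll(\log X)^{k^2+\epsilon/(2r)}.
$$
This holds when $|2\sigma-1+2i(t+\Im z)|\le1$. In the complementary range we use the first bound with absolute values, $\ll(\log X)^{k^2+3/r}$; that bound alone is too weak when $\epsilon=-1$. There we instead obtain a bound from the asymptotic in \Cref{lem:sizelem} together with \Cref{lem:proplem3}, valid for large $|w|$: on $\Re w>0$ the relevant $\log L$ sums are bounded by $O(\log(2+|w|))$, giving a polynomial growth factor $(1+|t|+|\Im z|)^{C}$.

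\textbf{Assembling and the main obstacle.} Integrating against $|V(z_1)V(z_2)|\,|W_X(t)|^2$ finishes the argument. The weight $|W_X(t)|^2$ decays rapidly once $|t|\gg1/\log X$ and has total mass $\ll1$. The product of the two norms is bounded by $(\log X)^{2k^2+\epsilon/r}$ times a polynomial in $|\Im z_1|,|\Im z_2|$, uniformly in $t$ near $0$. Since $\int_{(\eta/\log X)}|V(z)|\,|dz|\ll1$, the diagonal is $\ll X(\log X)^{2k^2+\epsilon/r}$. The main obstacle is this last step: making the $z$-integrals converge against the polynomial growth in $\Im z$, which needs the extra decay of $V(z)$ obtained by integrating by parts in $x$, together with uniformity in \Cref{lem:sizelem} away from $|2\sigma-1+2it|\le1$. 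If that is troublesome, I would restrict $|\Im z|\le(\log X)^{-1}$, within which the sharp bound applies, and bound the tail using $V(z)\ll_A |z|^{-A}(\log X)^{-A+1}$ together with the crude first bound of \Cref{lem:sizelem}.
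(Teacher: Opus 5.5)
Your overall structure matches the paper's proof. You use \Cref{lem:charlem} to reduce to $\frac{4X}{\pi^2}\norm{b(l)w_N(l)l^{-1/2-it}}^2$ plus a power-saving error, then Mellin inversion of $w_N$ on $\Re z=1/\log X$, \eqref{eq:cauchy}, \Cref{lem:trunclem} and \Cref{lem:sizelem}. Inserting the Mellin integral for both weights rather than one is harmless: your bound factors into two integrals of the form $\frac{1}{2\pi}\int\abs{V(z)}\,\norm{b(l)l^{-1/2-\Re z-i(t\pm\Im z)}}\,\abs{dz}$, which is what the paper obtains.

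The gap is in the range where $\abs{t\pm\Im z}$ is not small. Your main route asserts that the relevant $\log L$-sums are $O(\log(2+\abs{w}))$ for large $\abs{w}$, citing \Cref{lem:proplem3}. That lemma is only stated for $0<\abs{w}<1$, and the asymptotic in \Cref{lem:sizelem} is only established for $\abs{2\sigma-1+2it}\le 1$. Extending them uniformly as $\Re w\to 0^+$, with polynomial dependence on $\Im w$, needs two inputs the paper never supplies:
\begin{itemize}
\item a polynomial upper bound for $\abs{L(1+w,\pi\times\tilde\pi)}$;
\item more seriously, a polynomial lower bound for $\abs{L(1+w,\wedge^2\pi)}$, since the term $\frac{1}{r}\Re\sum_p a(p^2)p^{-1-w}$ contains $-\frac{2}{r}\log\abs{L(1+w,\wedge^2\pi)}$. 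This is zero-free-region input.
\end{itemize}

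Your fallback fails outright. With the cutoff $\abs{\Im z}\le(\log X)^{-1}$ the discarded tail is not small, because $V$ only begins to decay once $\abs{z}\log N$ is large. So $\int_{\abs{\Im z}>(\log X)^{-1}}\abs{V(z)}\,\abs{dz}$ is of order $1$; your own bound $\abs{z}^{-A}(\log X)^{-A+1}$ integrates to $O(1)$ there. Against the crude bound $(\log X)^{2k^2+6/r}$ this loses $(\log X)^{(6-\epsilon)/r}$. Your remark that the crude bound is too weak only when $\epsilon=-1$ is also a misdiagnosis. It is too weak for either sign, and that is irrelevant once the region where it is used has negligible weight.

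The missing idea is simply to cut at a fixed constant, as the paper does.
\begin{itemize}
\item For $\abs{t}\le 1/8$ and $\abs{\Im z}\le 1/8$ one has $\abs{2/\log X+2i(t\pm\Im z)}\le 1$. So the second bound of \Cref{lem:sizelem} gives $(\log X)^{k^2+\epsilon/(2r)}$ for each norm.
\item Everywhere else, use the crude first bound $(\log X)^{k^2+3/r}$. It is uniform in $t$ because $\abs{\her{a,a}}\le\her{\abs{a},\abs{a}}$, as $\rho\ge 0$.
\item This remaining region has weight $\ll_A(\log X)^{-A}$. First, $\int_{\abs{t}>1/8}\abs{W_X(t)}^2\,dt\ll_B(\log X)^{-B}$. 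Second, integrate by parts $A$ times in $x$; the boundary terms vanish because $v'$ vanishes to all orders at $0$ and $1$. This gives $\abs{V(z)}\ll_A\abs{z}^{-1}(\abs{z}\log N)^{-A}$ on $\Re z=1/\log X$, hence $\int_{\abs{\Im z}>1/8}\abs{V(z)}\,\abs{dz}\ll_A(\log X)^{-A}$.
\end{itemize}
No bound outside the range of \Cref{lem:sizelem} is needed.

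Two smaller slips:
\begin{itemize}
\item The off-diagonal term is $\ll X^{1/2+\varepsilon}N^{4\theta+3}\le X^{5/6+\varepsilon}$. Your exponent is miscomputed, though the power saving survives.
\item $b$ is not multiplicative: the truncation $\Omega(n_j)\le K_j$ only makes it factor over the blocks $\mathcal{S}_j$. The Euler product of \Cref{lem:multilem} applies to $b'$, which is why \Cref{lem:trunclem} is needed.
\end{itemize}
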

	\begin{proof} We have 

$$
\sumflat_{d \le X } \abs{S_d(1/2+it) }^2 = \frac{4X}{\pi^2} \norm{ \frac{b(l)w_N(l)}{l^{1/2 +it}} }^2 
+ O(X^{9/10}).
$$
Taking $\eta=1/(\log X)$, we see that
$$
\norm{ \frac{b(l)w_N(l)}{l^{1/2 +it}} }^2 =
\frac{1}{2\pi i}\int_{(\eta)}V(z) \her{ \frac{b(l_1)}{l_1^{1/2 + it + z}} , \frac{b(l_2)w_N(l_2)}{l_2^{1/2 +it}} }  dz.
$$
An application of \eqref{eq:cauchy} with $a_1(l) = b(l) l^{-1/2-it-z}$ and $a_2(l) = b(l) l^{-1/2-it} w_N(l)$ shows that
$$
\abs{ \her{ \frac{b(l_1)}{l_1^{1/2 + it + z}} , \frac{b(l_2)w_N(l_2)}{l_2^{1/2 +it}} } }
\le 
\norm{ \frac{b(l)}{l^{1/2 + it + z}} } \cdot
\norm{ \frac{b(l)w_N(l)}{l^{1/2 +it}} }.
$$
Hence from \Cref{lem:trunclem}, we get 	
\begin{align*}
\norm{ \frac{b(l)w_N(l)}{l^{1/2 +it}} }^2
&\ll
\int_{-\infty}^{\infty} \abs{V(\eta+iv)} \norm{ \frac{b(l)}{l^{1/2 +\eta + i(t+v)}} }^2 dv
\\
&\ll
\int_{-\infty}^{\infty} \abs{V(\eta+iv)} \norm{ \frac{b'(l)}{l^{1/2 +\eta + i(t+v)}} }^2 dv.
\end{align*}

By \Cref{lem:sizelem} we have,
$$
\norm{ \frac{b'(l)}{l^{1/2 + \eta + i(t+v)}} }^2
   \ll
   \begin{cases}
    (\log X)^{2k^{2}+\frac\epsilon r} & \text{if }\abs{t+v} \le \frac{1}{4},
    \\
    (\log X)^{2k^{2}+\frac{6}{r}} & \text{if }\abs{t+v} > \frac{1}{4}.
    \end{cases}
$$
When $\abs{t+v} > \frac{1}{4}$ we have either $\abs{t}>1/8$ or $\abs{v}>1/8$. If $\abs v>\tfrac18$ then
$$
\int_{\abs {v} >1/8}\abs{V(\eta+iv)}dv\ll_A(\log X)^{-A};
$$
and if $\abs t >\tfrac18$ then
$$
\int_{\abs t >1/8}\abs{W_X(t)}^{2}dt\ll_B(\log X)^{-B}.
$$
Since, 
\begin{align*}
\mathcal{L}(1/2) &\ll X \int_{-\infty}^{\infty} \int_{-\infty}^{\infty} \abs{W_X(t)}^2 \abs{V(\eta+iv)} \norm{ \frac{b'(l)}{l^{1/2 +\eta + i(t+v)}} }^2 dvdt
\\
&= X \left( \iint_{\abs{t+v}\le \frac{1}{4}}+\iint_{\abs{t+v}> \frac{1}{4}} \right) \abs{W_X(t)}^2 \abs{V(\eta+iv)} \norm{ \frac{b'(l)}{l^{1/2 + \eta + i(t+v)}} }^2 dvdt
\\
& \ll 
X(\log X)^{2k^{2}+\frac\epsilon r} + X(\log X)^{2k^{2}+\frac{6}{r}-A} + X(\log X)^{2k^{2}+\frac{6}{r}-B}.
\end{align*}
\Cref{lem:Llem1} then follows on taking $A$ and $B$ larger than $2k^{2}+\frac{6}{r}+2$.
	\end{proof}
	
	\begin{lem}\label{lem:Llem2}
	Let $X>0$ be large. We have
		$$
		\mathcal{L}(\sigma) \gg X(\sigma-1/2)^{-2k^2-\epsilon/r}
		$$
		uniformly for 
		$$
		\frac{1}{2}+\frac{c_0(\pi,k)}{\log X} \le \sigma\le 1		
		$$
		for some $c_0(\pi,k)>0$ large enough.
	\end{lem}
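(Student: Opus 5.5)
To prove the lower bound for $\mathcal{L}(\sigma)$, I will average over $d$ to turn it into a $\langle\cdot,\cdot\rangle$-norm, then show that the mollifier cutoff $w_N$ costs almost nothing at distance $\sigma-\tfrac12 \ge c_0/\log X$, so that \Cref{lem:sizelem} supplies the size.

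\textbf{Step 1: reduce to a norm.} As in the proof of \Cref{lem:Llem1}, I expand $\abs{S_d(\sigma+it)}^2$ and apply \Cref{lem:charlem}. The polynomial has length $N^2=X^{2/15}$ and $\abs{b(l)}\ll l^{\theta}$, so the non-square contribution is $O(X^{1/2+\varepsilon}N^{c})$, which is $O(X^{9/10})$. This gives
$$
\mathcal{L}(\sigma)=\frac{4X}{\pi^2}\int_{-\infty}^{\infty}\norm{\frac{b(l)w_N(l)}{l^{\sigma+it}}}^2\abs{W_X(t-i(\sigma-1/2))}^2\,dt+O(X^{9/10+\varepsilon}).
$$
For real $t$ and $\sigma\le 1$, the weight $W_X(t-i(\sigma-1/2))$ is $\int e^{-1/(1-y^2)}X^{i\delta(t-i(\sigma-1/2))y}dy$ times $(\log X)^{1/2}$. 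I will restrict to $\abs{t}\le c/\log X$ for a small constant $c$, where the phase $X^{i\delta ty}$ barely oscillates. There $\abs{W_X(t-i(\sigma-1/2))}^2\gg \log X$, since the integrand $e^{-1/(1-y^2)}X^{\delta(\sigma-1/2)y}$ is positive and dominates the oscillation. It then suffices to show that for such $t$,
$$
\norm{\frac{b(l)w_N(l)}{l^{\sigma+it}}}^2\gg(\sigma-1/2)^{-2k^2-\epsilon/r}.
$$

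\textbf{Step 2: remove $w_N$.} I write $w_N(l)=1+\frac{1}{2\pi i}\int_{(-\eta)}V(z)l^{-z}dz$ and use \eqref{eq:minkowski}:
$$
\norm{\frac{b(l)w_N(l)}{l^{\sigma+it}}}\ge\norm{\frac{b(l)}{l^{\sigma+it}}}-\frac{1}{2\pi}\int_{(-\eta)}\abs{V(z)}\,\norm{\frac{b(l)}{l^{\sigma+it+z}}}\abs{dz}.
$$
Pulling the $z$-integral outside the norm needs a limiting Minkowski/Cauchy argument, which I will justify with \eqref{eq:cauchy} and truncation as in \Cref{lem:inequalitylem}. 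I choose $\eta=(\sigma-1/2)/2$, so that $\Re(\sigma+z)-\tfrac12=(\sigma-1/2)/2\ge c_0/(2\log X)$; this keeps \Cref{lem:trunclem} and \Cref{lem:sizelem} applicable on the line.

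On $\Re z=-\eta$ I use $\abs{V(z)}\le \abs{z}^{-1}N^{\eta}\int_0^1\abs{v'}$, together with the decay of $V$ in $\Im z$ from repeated integration by parts. By \Cref{lem:trunclem} and \Cref{lem:sizelem} (first bound), the error term is
$$
\ll N^{-\eta}(\sigma-1/2)^{-k^2-3/r}\log\log X.
$$
Here the factor $N^{-\eta}=X^{-(\sigma-1/2)/30}\le e^{-c_0/30}$. I expect the $\log\log$ loss from $\int\abs{dz}/\abs{z}$ to be removable by splitting $\abs{\Im z}\lessgtr 1$; alternatively I will use the second, sharper bound of \Cref{lem:sizelem} near $\Im z\approx -t$.

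The main term comes from \Cref{lem:trunclem} and the $\asymp$ statement of \Cref{lem:sizelem}. With $\abs{2\sigma-1+2it}\asymp\sigma-1/2$ for $\abs{t}\le c/\log X$, this gives $\gg(\sigma-1/2)^{-k^2-\epsilon/(2r)}$.

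\textbf{Main obstacle.} The error exponent $k^2+3/r$ exceeds the main exponent $k^2+\epsilon/(2r)$ by a fixed power. Beating it requires $N^{-\eta}$ to decay faster than a power of $\sigma-\tfrac12$, i.e. $X^{-(\sigma-1/2)/30}\le\kappa\,(\sigma-1/2)^{C/r}$, which is false when $\sigma-\tfrac12\approx c_0/\log X$ with a fixed $c_0$. To fix this, I will not use the crude absolute-value bound. Instead I will evaluate the cross term $\langle b(l)l^{-\sigma-it-z},\,b(l)l^{-\sigma-it}\rangle$ via \Cref{lem:multilem}. Its Euler product behaves like the main term with $2\sigma-1$ replaced by $2\sigma-1+z$, hence is of size $\asymp(\sigma-1/2)^{-k^2}\abs{2\sigma-1+z+\dots}^{-k^2-\epsilon/r}$ with matching exponents. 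The remaining shifted integral is then $\ll e^{-c_0/30}$ times the main term, and taking $c_0(\pi,k)$ large absorbs it, yielding $\norm{b w_N l^{-\sigma-it}}^2\gg(\sigma-1/2)^{-2k^2-\epsilon/r}$. Integrating over $\abs{t}\le c/\log X$ against $\abs{W_X}^2\gg\log X$ then completes the proof.
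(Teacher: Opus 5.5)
Your skeleton matches the paper's proof. You restrict to $\abs{t}\ll 1/\log X$ so that $\abs{W_X(t-i(\sigma-\frac12))}^2\gg\log X$. You convert the $d$-sum of $\abs{S_d}^2$ into $\frac{4X}{\pi^2}\norm{b(l)w_N(l)l^{-\sigma-it}}^2$. You remove $w_N$ through the Mellin integral on $\Re z=-\eta$ with $\eta=(\sigma-\frac12)/2$, and you absorb the error using $N^{-\eta}=X^{-(\sigma-1/2)/30}\le e^{-c_0/30}$.

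The only divergence is in the error term. There, the ``main obstacle'' you identify comes only from using the first (absolute-value) bound of \Cref{lem:sizelem}. The paper bounds the error by $\frac1{2\pi}\int\abs{V(-\eta+iv)}\,\norm{b(l)l^{-(\sigma-\eta)-i(t+v)}}\,dv$. For $\abs v\le 1/8$ it uses \Cref{lem:trunclem} together with the \emph{second} bound of \Cref{lem:sizelem} at $\sigma'=\sigma-\eta$. Since $2\sigma'-1=\sigma-\frac12$ and $\abs{2\sigma'-1+2i(t+v)}\le1$, this gives
\[
\norm{\frac{b(l)}{l^{\sigma-\eta+i(t+v)}}}^2\ll\Bigl(\sigma-\frac12\Bigr)^{-2k^2-\epsilon/r}\Bigl(1+\frac{\abs{t+v}}{2\sigma-1}\Bigr)^{-(k^2+\epsilon/r)},
\]
whose exponent matches the main term exactly. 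For $\abs v>1/8$ the crude bound is harmless, because $\int_{\abs v>1/8}\abs{V(-\eta+iv)}\,dv\ll_A(\log X)^{-A}$. So the ``alternative'' you mention in passing is the actual fix.

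The decay in $\abs{t+v}$ on the scale $\sigma-\frac12$ is also what removes the $\log\log X$ coming from $\int\abs{dz}/\abs z$. Splitting at $\abs{\Im z}=1$ does not remove it. Integrating $V$ by parts also works: it gives decay on the scale $1/\log N$, which suffices since $\eta\log N\ge c_0/30$.

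The replacement you actually commit to is not justified as written. It expands $\norm{bw_Nl^{-s}}^2$, discards $\norm{b(w_N-1)l^{-s}}^2\ge0$, and evaluates $\her{b(l)l^{-s-z},b(l)l^{-s}}$ as an Euler product. Note first that this cross term arises from expanding the square, not from the Minkowski step you set up in Step 2. More seriously, \Cref{lem:multilem} needs multiplicative coefficients, and $b$ is not multiplicative because of the constraints $\Omega(n_j)\le K_j$; only $b'$ is. You would need a version of \Cref{lem:trunclem} for shifted inner products, comparing $b$ with $b'$ block by block. In addition, the evaluation via \Cref{lem:proplem3} is only available for bounded $\abs{\Im z}$, since it requires $\abs w<1$. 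The natural way to bound that cross term is \eqref{eq:cauchy} plus the two norms, which brings you straight back to the paper's argument. So drop the detour and use the sharp norm bound.
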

	
	\begin{proof}
	Consider
	$$
	\mathcal{L}(\sigma) \ge \sumflat_{d\le X}\int_{-50r/\log X}^{50r/\log X}\abs{S_d(\sigma+it)}^{2} \abs{ W_X(t-i(\sigma-1/2))}^2\,dt.
	$$
	Noting that for $\abs{t}\le 50r/ (\log X)$,
	\begin{align*}
	\abs{ W_X(t-i(\sigma-1/2)) - W_X(-i(\sigma-1/2)) }
	 \le \tfrac12 \abs{ W_X(-i(\sigma-1/2)) }.
	\end{align*}
	Therefore,
	\begin{align*}
	\abs{W_X(t-i(\sigma-1/2))} 
	&\ge  \tfrac12 \abs{ W_X(-i(\sigma-1/2)) } 
	\\
	&\ge  \tfrac12 \abs{ W_X(0) }
	\\
	&\gg (\log X)^{1/2}.
	\end{align*}
	Thus,
	$$
	\mathcal{L}(\sigma) \gg (\log X)\sumflat_{d\le X}\int_{-50r/\log X}^{50r/\log X}\abs{S_d(\sigma+it)}^{2} \,dt.
	$$
	Since we have
	$$
\sumflat_{d \le X } \abs{S_d(\sigma+it) }^2 = \frac{4X}{\pi^2} \norm{ \frac{b(l)w_N(l)}{l^{\sigma +it}} }^2 + O(X^{9/10}).
        $$ 
        Applying \eqref{eq:minkowski} with $a_1(l) = b(l)w_N(l)l^{-\sigma-it}$ and 
        $$
        a_2(l) = b(l)(w_N(l) - 1)l^{-\sigma -it} = \frac{1}{2\pi i}\int_{(-\eta)} V(z) \frac{b(l)}{l^{\sigma + it + z}}dz
        $$
        for $\eta>0$, we get 
        \begin{equation}\label{eq:Llem2eq}
        \norm{ \frac{b(l)}{l^{\sigma +it}} }
        \le
         \norm{ \frac{b(l)w_N(l)}{l^{\sigma +it}} }  +
        \norm{ a_2(l) } .
        \end{equation}
        Write
        $$
        \norm{ a_2(l) }^2 = \frac{1}{2\pi i}\int_{(-\eta)} V(z)  \her{\frac{b(l_1)}{l_1^{\sigma +it +z}} , a_2(l_2)} \,dz,
        $$
        then it follows from \eqref{eq:cauchy} that
        $$
        \norm{ a_2(l) } \le \frac{1}{2\pi}\int_{(-\eta)} \abs{V(-\eta+iv)}  \norm{\frac{b(l)}{l^{\sigma -\eta +i(t + v)}}} \,dv.
        $$
        We choose $\eta=(\sigma -1/2)/2$. For the range $\abs{v}\le 1/8$ we have $\abs{ 2(\sigma - \eta)-1 + 2i(t + v)} \le 1$, thus by \Cref{lem:sizelem} we get
        $$
         \norm{ \frac{b(l)}{l^{\sigma -\eta + i(t+v)}} }^2 \ll
         \left(\frac{1}{2\sigma-1}\right)^{2k^{2} + \epsilon/r}
    \left(1 + \frac{\abs{t+v}}{2\sigma-1}\right)^{-(k^{2}+\epsilon/r)}.
        $$
        Hence,
        $$
        \int_{\abs{v} \le 1/8} \abs{V(-\eta +iv)} \norm{ \frac{b(l)}{l^{\sigma -\eta + i(t+v)}} } \,dv
        \ll
        X^{-\frac{\sigma-1/2}{30}} (\sigma-1/2)^{-\frac12 (2k^{2} + \epsilon/r)}.
        $$
        For $\abs{v}>1/8$ we have by \Cref{lem:sizelem},
        $$
         \int_{\abs{v} > 1/8} \abs{V(-\eta +iv)} \norm{ \frac{b(l)}{l^{\sigma -\eta + i(t+v)}} } \,dv
         \ll_B 
         (\log X)^{2k^2 + \frac{6}{r} - B}.
        $$
        For $B>2k^2 + \frac{6}{r} + 2$ this is negligible. By \eqref{eq:Llem2eq} and \Cref{lem:sizelem} we get
        $$
        (\sigma-1/2)^{-\frac12 (2k^{2} + \epsilon/r)}      
         \ll
       \norm{ \frac{b(l)w_N(l)}{l^{\sigma +it}} }
        +\quad X^{-\frac{\sigma-1/2}{30}} (\sigma-1/2)^{-\frac12 (2k^{2} + \epsilon/r)}.
        $$
        Taking $\sigma \ge \frac{1}{2} + \frac{c}{\log X}$ with $c>0$ large enough, we get 
        $$
        \norm{ \frac{b(l)w_N(l)}{l^{\sigma +it}} }\gg (\sigma-1/2)^{-\frac12 (2k^{2} + \epsilon/r)}  .
        $$
        Hence,
        \begin{align*}
        \mathcal{L}(\sigma) 
        &\ge (\log X)\sumflat_{d\le X}\int_{-50r/\log X}^{50r/\log X}\abs{S_d(\sigma+it)}^{2}\,dt
        \\
        &\gg X(\log X)\int_{-50r/\log X}^{50r/\log X}\norm{ \frac{b(l)w_N(l)}{l^{\sigma +it}} }^2\,dt
        \\
        &\gg X(\sigma -1/2)^{-(2k^2+\epsilon/r)},
        \end{align*}
       uniformly for $\frac{1}{2} +\frac{c}{\log X} \le \sigma \le 1$ .

        \end{proof}

		We now deduce \Cref{lem:mainlem*} from \Cref{lem:JKlem}, \Cref{lem:Llem1} and \Cref{lem:Llem2} following the argument of Heath-Brown \cite[p. 76]{HeathBrown1981}. 
		
	\begin{proof}[Proof of \Cref{lem:mainlem*}]
		We may assume that 
		$$
		\mathcal{J}(1/2) \le X (\log X)^{2k^2+\epsilon/r} ,
		$$
		for otherwise \Cref{lem:mainlem*} is established. Then by \eqref{eq:II}, \Cref{lem:Llem1} and \Cref{lem:JKlem}, we get 
		$$
		\mathcal{K}(\sigma)\ll X^{-\frac{1}{100r}(\sigma-1/2)} X(\log X)^{2k^2+\epsilon/r}.
		$$
		By \eqref{eq:I} and \Cref{lem:JKlem}, we get 
		$$
		\mathcal{L}(\sigma)\ll X^{(1+\frac{1}{50r}) (\sigma-1/2)}\mathcal{J}(1/2)^{3/2-\sigma} + X^{-\frac{1}{100r}(\sigma-1/2)} X(\log X)^{2k^2+\epsilon/r}.
		$$
		Choose $\sigma=1/2+c/\log X$ with $c>c_0(\pi,k)$. Using \Cref{lem:Llem2}, we get 
		$$
		X(\log X)^{2k^2+\epsilon/r} c^{-2k^2-\epsilon/r} \ll  \mathcal{J}(1/2) e^{c(1+\frac{1}{50r})}  + X(\log X)^{2k^2+\epsilon/r} e^{ -\frac{c}{100r} }.
		$$
		Hence, for $c=c(\pi,k)>0$ sufficiently large, we see that 
		$$
		\mathcal{J}(1/2)\gg X(\log X)^{2k^2+\epsilon/r}.
		$$
		This finishes the proof of \Cref{lem:mainlem*}.
	\end{proof}

\section{Acknowledgments}

I would like to thank Prof. Sanoli Gun and Gaurav Kumar for discussions and constant encouragement.

\end{document}